\renewcommand\labelenumi{\roman{enumi})} 
\renewcommand\theenumi\labelenumi
\newcommand{\eps}{\varepsilon}
\newcommand{\Ups}{\Upsilon}
\newcommand{\wdt}[1]{\widetilde #1}
\newcommand{\ovl}[1]{\overline #1}
\newcommand{\pl}{\partial}
\newcommand{\bs}{\backslash}
\newcommand{\vol}{\operatorname{vol}}
\newcommand{\im}{\operatorname{im}}
\newcommand{\tr}{\operatorname{tr}}
\newcommand{\otr}{\operatorname{Tr}}
\newcommand{\Real}{\operatorname{Re}}
\newcommand{\sym}{\operatorname{Sym}}
\newcommand{\End}{\operatorname{End}}
\renewcommand{\hom}{\operatorname{Hom}}
\newcommand{\inj}{\operatorname{inj}}
\newcommand{\id}{\operatorname{id}}
\newcommand{\sys}{\operatorname{sys}}
\newcommand{\PSL}{\mathrm{PSL}}
\newcommand{\SL}{\mathrm{SL}}
\newcommand{\SU}{\mathrm{SU}}
\newcommand{\SO}{\mathrm{SO}}
\newcommand{\so}{\mathcal{O}}
\newcommand{\sh}{\mathcal{H}}
\newcommand{\sB}{\mathcal{B}}
\newcommand{\LP}{\mathfrak{p}}
\newcommand{\LSL}{\mathfrak{sl}}
\newcommand{\frg}{\mathfrak{g}}
\newcommand{\frk}{\mathfrak{k}}
\newcommand{\fri}{\mathfrak{I}}
\newcommand{\abs}{\mathrm{abs}}
\newcommand{\res}{\mathrm{res}}
\newcommand{\cusp}{\mathrm{cusp}}
\newcommand{\cont}{\mathrm{cont}}
\newcommand{\disc}{\mathrm{disc}}
\newcommand{\eis}{\mathrm{Eis}}
\newcommand{\lox}{\mathrm{lox}}
\newcommand{\tors}{\mathrm{tors}}
\newcommand{\free}{\mathrm{free}}
\newcommand{\ad}{\mathrm{Ad}}
\newcommand{\NN}{\mathbb N}
\newcommand{\CC}{\mathbb C}
\newcommand{\RR}{\mathbb R}
\newcommand{\ZZ}{\mathbb Z}
\newcommand{\HH}{\mathbb H}
\newcommand{\QQ}{\mathbb Q}
\title{Asymptotics of analytic torsion for hyperbolic three--manifolds}
\author{Jean Raimbault}
\address{Institut de Math\'ematiques de Toulouse ; UMR5219 Universit\'e de Toulouse ; CNRS--UPS IMT, F-31062 Toulouse Cedex 9, France}
\email{jraimbau@math.univ-toulouse.fr}
\subjclass[2000]{Primary 58J52 ; Secondary 11F75, 11F72, 22E40, 57M10}
\newtheorem{theo}{Theorem}[section]
\newtheorem{lem}[theo]{Lemma}
\newtheorem{prop}[theo]{Proposition}
\newtheorem{theostar}{Theorem}
\newtheorem{propstar}[theostar]{Proposition}
\begin{document}

\numberwithin{equation}{section}

\begin{abstract}
We prove that for certain sequences of hyperbolic three--manifolds with cusps which converge to hyperbolic three--space in a weak (``Benjamini-Schramm'') sense and certain coefficient systems the regularised analytic torsion approximates the $L^2$-torsion of the universal cover. 

We also prove an asymptotic equality between the former and the Reidemeister torsion of the truncated manifolds. 
\end{abstract}

\maketitle

\setcounter{tocdepth}{1}
\tableofcontents

\setcounter{tocdepth}{2}

\section{Introduction}

\subsection{Integral homology of congruence manifolds}

In \cite{BV} N. Bergeron and A. Venkatesh have shown that for odd $m$, in sequences of compact arithmetic hyperbolic $m$-manifolds which converge to $\HH^m$ the homological torsion has an exponential growth for certain local systems. That is, there exists $\QQ$-representations of $\SO(m,1)$ on a space $V$ such that if $\Gamma$ is a uniform arithmetic lattice in this $\QQ$-form of $\SO(m,1)$, preserving a lattice $V_\ZZ$ in $V$ and $\Gamma_n$ a sequence of finite-index subgroups of $\Gamma$ such that the injectivity radius of the $M_n=\Gamma_n\backslash \HH^m$ goes to infinity we have that
\begin{equation} \label{limBV}
\liminf_{n\to\infty} \sum_{\substack{p=1,\ldots,m-1\\ p=\frac{m-1}2\pmod{2}}} \frac{\log|H_p(\Gamma_n,V_\ZZ)_\tors|}{\vol M_n} >0. 
\end{equation}
In \cite{7S} it is essentially proven that the limit \eqref{limBV} holds for any sequence of torsion-free congruence subgroups of a uniform arithmetic lattice (see \cite[6.1]{thesis} for a detailed argument). Moreover, when $m=3$ elementary arguments show that one can deduce from Bergeron and Venkatesh's proof an actual limit for the left-hand side, that is 
\begin{equation} \label{limBV_d=3}
\lim_{n\to +\infty} |H_1(\Gamma_n;V_\ZZ)|^{1/\vol M_n} = c
\end{equation}
where $c>1$ depends only on $V$. The present paper, originating from the author's Ph.D. thesis \cite{thesis}, aims at providing tools to prove an analogue of \eqref{limBV_d=3} for \emph{nonuniform} lattices in $\SO(3,1)\cong\SL_2(\CC)$. Weaker results (generalisations of \eqref{limBV}) were previously obtained  by J. Pfaff in \cite{Pfafftors} and by the author in \cite[Section 6.5]{thesis}. We refer to the introduction of \cite{moi2} for more details and further questions, and to \cite{BV},\cite{CV} and \cite{Scholze} for information on the number-theoretical significance of torsion homology of congruence subgroups. 

%%%%%%%%%%%%%%%%%%%%%%%%%%%%%%%%%%%%%%%%%%%%%%%%%%%%%%%%%%%%

\subsection{Analytic torsion and Cheeger-M\"uller equality}

The main tools used in \cite{BV} are the Ray-Singer analytic torsion $T(M_n;V)$ and the Cheeger-M\"uller theorem. Bergeron and Venkatesh prove that the limit 
\begin{equation} \label{limanBV}
\lim_{n\to\infty} \frac{\log T(M_n;V)}{\vol M_n} = t^{(2)}(V) 
\end{equation}
holds, where the right-hand side $t^{(2)}(V)$ is the $L^2$-torsion associated to the representation $(\rho,V)$. In the case $m=3$, we have that $\SO(3,1)$ is isogenous to $G=\SL_2(\CC)$, and the real representations of the latter are given by its natural action on the spaces
\[
V(n_1, n_2) = \sym^{n_1}\left(\CC^2\right)\otimes\sym^{n_2}\left(\ovl{\CC^2}\right), \, n_1,n_2\in\NN 
\]
(where $\ovl{\CC^2}$ means that the action of $\SL_2(\CC)$ is by conjugate matrices). For $V = V(n_1, n_2)$ Bergeron and Venkatesh compute the numerical value of $t^{(2)}$ to be:
\begin{equation}
t^{(2)}(V)=\frac{-1}{48\pi} \left( (n_1+n_2+2)^3-|n_1-n_2|^3+3|n_1-n_2|(n_1+n_2+2)(n_1+n_2+2-|n_1-n_2|) \right).
\label{formuleL2}
\end{equation}

On the other hand, W. M\"uller's generalisation \cite{Muller} of the Cheeger--M\"uller Theorem (a more general result was proven independently by J.M.~Bismut and W.~Zhang in \cite{Bismut_Zhang}) yields that 
\[
T(M_n;V)=\prod_{p=0}^m|H_p(\Gamma_n,V_\ZZ)_\tors|^{(-1)^p}
\] 
from which \eqref{limBV} follows at once since the $L^2$-torsion $t^{(2)}(V)$ is positive for $m=3\pmod 4$ and negative for $m=1\pmod 4$; to deduce \eqref{limBV_d=3} when $m=3$ one needs to study independently the torsion in $H_0$ and $H_2$. One of the issues in \cite{7S} is then to prove that \eqref{limanBV} holds under weaker conditions than those of \cite{BV} and that these conditions are satisfied by sequences of congruence subgroups. Following the work of I. Benjamini and O. Schramm on graphs the notion of Benjamini-Schramm convergence of Riemannian manifolds is defined there (see \ref{BS} below) and it is then a relatively easy matter to show that the proofs of \cite{BV} extend to this setting. Note that the first step of the proof outlined above is purely differential-geometric and does not use the arithmeticity of the manifolds. 

%%%%%%%%%%%%%%%%%%%%%%%%%%%%%%%%%%%%%%%%%%%%%%%%%%%%%%%%%%%%

\subsection{Approximation for regularised analytic torsion}

The first goal of the present paper is to define an analytic torsion for non-compact, finite-volume hyperbolic three--manifolds and to prove a generalisation of \eqref{limanBV} in this context. The definition of the regularised analytic torsion $T_R(M_n;V)$ is based on the Selberg trace formula; it is essentially the same torsion as that defined in \cite{MP} (but see \ref{diff_MP} for some comments on the differences). The definition depends on a choice of parametrisations (which we call `height functions' on $M_n$---see \ref{height_quotient}) for the cusps of $M_n$ as $T_j\times[1,+\infty[$ where the $T_j$ are flat tori. Let $M_n$ be a sequence of finite-volume hyperbolic three--manifolds; the conditions we need to prove approximation of the $L^2$-torsion are as follows:
\begin{itemize}
\item Geometric conditions:
   \begin{enumerate}
   \item \label{cond_BS} The sequence $(M_n)$ is Benjamini--Schramm convergent to $\HH^3$ (see \ref{BS}); 
   \item \label{cond_sys} We suppose that there is a $\delta>0$ such that $\sys(M_n)$ (the smallest length of a closed geodesic on $M_n$) is larger than $\delta$ for all $n$. 
   \item \label{cond_cusp} Some kind of regularity for the cusps: in this introduction we will take this to mean that the sequence be cusp-uniform (i.e. the cross-sections $T_j$ of the cusps of all lie in a fixed compact subset of the set of Euclidean tori up to similarity), but this can be relaxed a little (see \eqref{square!}in the statement of Theorem \ref{conv1}).  
   \end{enumerate}
\item Analytic assumptions:
   \begin{enumerate}[resume]
   \item \label{cond_gap} As in \cite{BV}, we need to use coefficients systems that induce a uniform spectral gap for all hyperbolic manifolds (such coefficients systems are called strongly acyclic; $V(n_1, n_2)$ is so exactly when $n_1\not= n_2$: see Proposition \ref{strongacyclicity} or \cite[Lemma 4.1]{BV}); 
   \item \label{cond_entr} In addition, to deal with the continuous spectrum we need to assume that the derivatives of the intertwining operators are well-behaved near the origin, namely that their trace be an $o(\vol M_n)$ uniformly in a neighbourhood of 0. 
   \end{enumerate}
\item A normalisation condition for the height functions (we emphasise that this is really not of the same nature as the other conditions and should be seen as specifying the range of height functions for which we can expect approximation results):
   \begin{enumerate}[resume]
   \item \label{cond_normheight} We suppose that $\sum_j \left| \log(\inj(T_j)) \right| = o(\vol M_n)$.  
   \end{enumerate}
\item We need also to choose lifts to $\SL_2(\CC)$ of the holonomies $\pi_1(M_n)\to\PSL_2(\CC)$; while our results are valid without assumptions on these (see \ref{non_unip_rem}), in this paper we will work under the following hypothesis 
   \begin{enumerate}[resume]
   \item \label{cond_unip} The lifts of all peripheral elements (i.e. elements in the image of maps $\pi_1(T_j)\to\pi_1(M)$) are unipotent (equivalently the image of $\pi_1(M)$ in $SL_2(\CC)$ does not contain an element with trace $-2$). 
   \end{enumerate}
\end{itemize}
Our first main result is Theorem \ref{Main1}, which can be stated as follows. 

\begin{theostar} \label{Main_intro1}
Let $V$ be a strongly acyclic representation of $G = \SL_2(\CC)$ and $\Gamma_n$ as sequence of torsion-free lattices in $G$. We suppose that the manifolds $M_n=\Gamma_n\bs\HH^3$ are endowed with height functions and satisfy \ref{cond_BS}, \ref{cond_sys}, \ref{cond_cusp}, \ref{cond_entr} and \ref{cond_normheight} and that $\Gamma_n$ satisfies \ref{cond_unip}. Then we have
\begin{equation}
\lim_{n\to\infty}\frac{\log T_R(M_n;V)}{\vol M_n} = t^{(2)}(V).
\end{equation}
\end{theostar}

Note that it is easily shown that for any given finite-volume hyperbolic three--orbifold there are sequences of finite covers which satisfy the assumptions \ref{cond_BS} and \ref{cond_cusp} above (see Proposition \ref{cuex}), but we will not check that \ref{cond_entr} holds for them in the present paper (it will be proven to hold for sequences of congruence covers of arithmetic orbifolds in \cite{moi2}). Conditions \ref{cond_entr}, \ref{cond_normheight} (unlike the others) depend on the choice of height functions on the $M_n$; however whether \ref{cond_entr} holds or not does not depend on this choice in the range of height functions such that \ref{cond_normheight} holds (see the remark after Theorem \ref{Main1}). Finally, if we consider a sequence of finite covers $M_n$ of a fixed orbifold $M$ then the natural height functions to use on the $M_n$ are the pull-back of those on $M$, and it is very easily seen that they satisfy \ref{cond_normheight} (see Lemma \ref{sumalpha}). 

Let us say a few more informal words about the necessity of these conditions: \ref{cond_BS} is necessary (there are sequences of covers where one can see that the torsion has an exponential growth with a different rate); \ref{cond_cusp} may or may not be (there are sequences of congruence covers which do not satisfy it, but we do not know whether approximation for the analytic torsion holds in these); \ref{cond_sys} is very likely necessary (one can make the torsion vary arbitrarily by doing Dehn surgeries on a given manifold). Condition \ref{cond_entr} was shown by J. Pfaff and W. M\"uller to always hold for sequences of covers (cf. \cite{MP2}, whose prepublication was posterior to the first submission of the present paper), but in general not much more is known; \ref{cond_gap} is likely not necessary for covers, but it is in general (see \cite{BD}). Of course \ref{cond_normheight} is necessary since for a given manifold the regularised analytic torsion can take arbitrarily large values if one does not put limitations on the height functions one wants to consider.

%%%%%%%%%%%%%%%%%%%%%%%%%%%%%%%%%%%%%%%%%%%%%%%%%%%%%%%%%%%%

\subsection{An asymptotic Cheeger--M\"uller equality}

The next step in adapting Bergeron and Venkatesh's argument to the case of non-compact manifolds is to relate the regularised analytic torsion to a combinatorial, or Reidemeister torsion (the latter is named after K. Reidemeister who was one of the first to study this kind of invariants, for somewhat different purposes). In this paper we do not define such a torsion in an intrisic way for a non-compact hyperbolic manifold $M$ of finite volume (this is carried out in \cite{CV}, see also \cite{moi2}); we will instead use the truncated manifold $M^Y$, which are obtained by `cutting off the cusps' of $M$ using a parameter $Y$ (see \eqref{tron} for the definition). Thus $M^Y$ is a compact manifold with boundary, for which analytic and Reidemeister torsion are well-defined and the Cheeger--M\"uller equality is known---see \ref{sec_abs}. Our second main result is then the following (Theorem \ref{CMA}). 

\begin{theostar} \label{MainIntro2}
Suppose that $M_n$ and $V$ are as in the statement of the previous theorem and that the sequence $(M_n)$ satisfies the additional condition that
\[
h_n\ll \frac{\vol M_n}{\log(\vol M_n)^{20}}
\]
(where $h_n$ is the number of cusps of $M_n$), then there exists a sequence $Y^n\in[1,+\infty[^{h_n}$ such that we have
\begin{equation}
\lim_{n\to\infty}\frac{\log T_R(M_n;V) - \log\tau_\abs(M_n^{Y^n},V)}{\vol M_n} = 0.
\end{equation}
\end{theostar}

An explicit formula for $Y^n$ is given in the statement of Theorem \ref{Main2}. Note that the sequences constructed in Proposition \ref{cuex} satisfy also the stronger assumption in this theorem. 

%%%%%%%%%%%%%%%%%%%%%%%%%%%%%%%%%%%%%%%%%%%%%%%%%%%%%%%%%%%%

\subsection{Betti numbers}

The behaviour of the characteristic 0 homology in BS-convergent sequences of non-compact hyperbolic manifolds is not dealt with in \cite{7S}. For three--manifolds we prove the following result. 

\begin{propstar} \label{betintro}
Let $M_n$ be a sequence of finite-volume hyperbolic three-manifolds and suppose that $M_n$ BS-converges to $\HH^3$. Then we have for $p=1,2$
\begin{equation*}
\frac{b_p(M_n)}{\vol(M_n)}\xrightarrow[n\to\infty]{} 0 
\end{equation*}
\end{propstar}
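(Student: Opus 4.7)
My strategy is to reduce to bounding $b_1$ and then split it into a topological contribution from the cusps and a spectral ``interior'' contribution, each bounded with different tools. Since $M_n$ is a non-compact finite-volume hyperbolic three-manifold, $\chi(M_n)=0$, $b_0(M_n)=1$, and $b_3(M_n)=0$, so $b_1(M_n)=b_2(M_n)$ and it suffices to bound $b_1(M_n)/\vol(M_n)$. Fix a compact truncation $M_n^Y$ homotopy equivalent to $M_n$ with $h(M_n)$ toroidal boundary components; the long exact sequence of the pair $(M_n^Y,\partial M_n^Y)$ combined with the half-lives-half-dies principle yields a splitting $b_1(M_n)=h(M_n)+\dim H^1_{\cusp}(M_n,\RR)$, where the first term captures the Eisenstein/boundary classes and the second is the interior piece (coming from $L^2$-harmonic 1-forms).

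Next I bound the cusp count. Each cusp of $M_n$ is isometric to $(\Lambda_c\bs\RR^2)\times[Y_c,\infty)$ with horospherical metric $y^{-2}(|dz|^2+dy^2)$ for some two-dimensional lattice $\Lambda_c$. The injectivity radius at height $y$ equals $\lambda_1(\Lambda_c)/(2y)$, so the $\eps$-thin part of the cusp is $\{y\geq\lambda_1(\Lambda_c)/(2\eps)\}$; integrating the volume form $y^{-3}\,dx\,dz\,dy$ gives volume $2\eps^2\,\mathrm{area}(\Lambda_c)/\lambda_1(\Lambda_c)^2\geq\sqrt{3}\,\eps^2$, the last inequality coming from the two-dimensional Hermite constant. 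Therefore $h(M_n)\leq\vol(M_n^{<\eps})/(\sqrt{3}\,\eps^2)$, and BS-convergence to $\HH^3$, which by definition means $\vol(M_n^{<\eps})=o(\vol(M_n))$ for every $\eps>0$, gives $h(M_n)=o(\vol(M_n))$.

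Finally, for the interior part, cuspidal harmonic 1-forms lie in $L^2$, so $\dim H^1_{\cusp}(M_n)\leq\tr(e^{-t\Delta_1}|_{L^2_{\disc}})$ for every $t>0$. By the Selberg trace formula the discrete heat trace decomposes as $\vol(M_n)\cdot K^{\HH^3}_1(t,0,0)$ (the identity contribution, equal to the pointwise heat trace on the universal cover) plus a geometric sum over closed geodesics (bounded by the short-geodesic thin volume, hence $o(\vol(M_n))$ by BS-convergence) plus parabolic cusp terms (proportional to $h(M_n)=o(\vol(M_n))$ by the previous step). This yields
\[
\limsup_{n\to\infty}\frac{\tr(e^{-t\Delta_1}|_{L^2_{\disc}})}{\vol(M_n)}\leq K^{\HH^3}_1(t,0,0),
\]
and the right-hand side tends to $0$ as $t\to\infty$ since the Hodge Laplacian on 1-forms on $\HH^3$ has no $L^2$-kernel.

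The main obstacle is in the last step: the parabolic contribution to the trace formula involves intertwining operators whose dependence on the cusp geometry has to be controlled (this is precisely the place where the ``well-behaved near $0$'' hypothesis of the main theorem enters in spirit, although here trivial coefficients simplify matters), and the interchange of limits $n\to\infty$ and $t\to\infty$ requires uniformity of the heat-kernel approximation that does not follow mechanically from pointwise BS-convergence. The step-2 estimate on $h(M_n)$ is essential here, as the Eisenstein defect in the trace formula scales linearly with the number of cusps.
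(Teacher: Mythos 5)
Your overall strategy --- write $b_1(M_n)=h(M_n)+\dim H^1_{\cusp}(M_n)$, bound $h(M_n)$ via the thin part, and control the interior piece by a heat trace --- contains correct ingredients (the first two are exactly Lemma \ref{nbcuspsBS} and a half-lives--half-dies decomposition), but the third step has a genuine gap that the paper carefully works around. When you write the discrete heat trace $\otr(e^{-t\Delta_1}|_{L^2_\disc})$ via the trace formula, the geometric side of the \emph{regularized} trace (identity + loxodromic + parabolic) is indeed manageable, but $\otr(e^{-t\Delta}|_{L^2_\disc})$ is obtained from $\otr_R(e^{-t\Delta})$ only after subtracting the Eisenstein contribution, i.e.\ the scattering integral $\frac{1}{2\pi}\int \phi(\cdot)\,\tr\bigl(\Psi(iu)^{-1}\tfrac{d\Psi}{du}\bigr)du$ together with the $\tr\Psi(0)$ correction (see \eqref{MS}). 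The $\tr\Psi(0)$ term is indeed $O(h_n)$, but the $\int\tr\Psi^{-1}\Psi'$ term is \emph{not} proportional to $h_n$: it measures the winding of $\det\Psi$, which a priori scales like $\vol M_n$. Controlling it is precisely the content of the ``well-behaved intertwining operator'' hypothesis of Theorem \ref{Main1}, which the Betti number statement does not assume. The paper explicitly flags this obstruction just before the second proof of Proposition \ref{betprop}: one cannot deduce the result directly from convergence of regularized traces because of these Eisenstein terms. (A further, smaller issue: even the parabolic geometric term is bounded by $\kappa_j$ constants depending on the cusp lattices, whose sum is $o(\vol M_n)$ only under condition \eqref{square!} or cusp-uniformity, which you have not imposed.)

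The paper's two proofs sidestep the scattering integral entirely, in two different ways. The first proof works with the compact truncated manifolds $M_n^{Y^n}$ (not the cusped $M_n$); since $M_n^{Y^n}$ is compact there is no continuous spectrum and hence no scattering term, and one uses $b_p(M_n)=\dim\ker\Delta_\abs^p[M_n^{Y^n}]\le\otr e^{-t\Delta_\abs^p[M_n^{Y^n}]}$ together with the convergence \eqref{convtr2} of the truncated heat trace; this still needs \eqref{square!}. The second, unconditional proof is of a completely different flavour: Thurston's hyperbolic Dehn surgery (Lemma \ref{dehn}) produces a BS-convergent sequence of \emph{closed} hyperbolic manifolds $M_n'$ with $|b_1(M_n)-b_1(M_n')|\le h_n$, and the compact-case result of \cite{7S} applies directly. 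Your proposal does not close the scattering gap, and a fix would essentially require reinventing one of these two detours. (One minor correction: $\chi(M_n)=0$ with $b_0=1$, $b_3=0$ gives $b_1=b_2+1$, not $b_1=b_2$; harmless for the estimate but worth noting.)
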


This limit is well-known for exhaustive sequences of covers as follows for example from M. Farber's generalisation \cite{Farber} of L\"uck's theorem \cite{Luck2} (applied to the manifolds truncated at 1). 

We will give two proofs of this: one which uses the techniques in this paper, and which consequently needs the assumption that the sequence $M_n$ satisfies the condition \eqref{square!}, and then a proof in all generality using Thurston's hyperbolic Dehn surgery and the results of \cite[Section 9]{7S}. The second proof does not generalise to higher dimensions but the first one does (after modifying \eqref{square!} adequately). We will perhaps return to this in the broader setting of $\QQ$-rank one lattices of semisimple real Lie groups in the future. 

%%%%%%%%%%%%%%%%%%%%%%%%%%%%%%%%%%%%%%%%%%%%%%%%%%%%%%%%%%%%

\subsection{Outline of the proofs}

\subsubsection{Convergence of finite-volume manifolds, regularised traces and Betti numbers}

In \cite[Definition 1.1]{7S} the notion of Benjamini--Schramm convergence of locally symmetric spaces to their universal cover is introduced, and a good part of the paper studies the implications of this notion for compact manifolds. In this work we extend some of these results to nonuniform lattices in $\SL_2(\CC)$ (see Section \ref{BS}). Let us remind the reader that Benjamini--Schramm convergence (to the universal cover $\HH^3$) is an interpolation between the weaker pointed Gromov--Hausdorff convergence and the stronger condition that the global injectivity radius goes to infinity. It is conveniently summarised by saying that ``the injectivity radius goes to infinity at almost all points''; formally, for a sequence $M_n$ of finite--volume hyperbolic three--manifolds to be convergent to $\HH^3$ we require that for all $R>0$ the sequence $\vol\{x\in M_n:\: \inj_x M_n\le R\}$ be an $o(\vol M_n)$. 

The regularised trace $\otr_R(K)$ of an automorphic kernel $K$ on a finite-volume manifold $M$ is defined by taking either side of a very unrefined form of the trace formula for $K$, of which we give a mostly self-contained proof---minus the theory of Eisenstein series, which we review in \ref{Eisenstein}---in Section \ref{trace}. The study of the geometric side in Benjamini-Schramm convergent sequences is not very hard and results in Theorem \ref{conv1}; note however that we need an additional condition on the geometry of the cusps to prove the convergence of the unipotent part. We prove, using comparisons of traces with the truncated manifolds, that the Betti numbers in a BS-convergent sequence are sublinear in the volume in Proposition \ref{betprop} (we cannot deduce it directly from Theorem \ref{conv1} since we did not manage to control the non-discrete part of the spectral side of the trace formula in general). On the other hand, to study Betti numbers in dimension three one can bypass all this by using \cite[Theorem 1.8]{7S} and hyperbolic Dehn surgery.

%%%%%%%%%%%%%%%%%%%%%%%%%%%%%%%%%%%%%%%%%%%%%%%%%%%%%%%%%%%%

\subsubsection{Analytic torsions}

Our definition of analytic torsion for cusped manifolds is the same as in \cite{Park} or \cite{MP} (we could have just quoted the results of the latter but we use a slightly different method to prove the asymptotic expansion of the heat kernel which is better suited to the rest of this paper). Let $M$ be a finite-volume manifold and $K_t^p$ its heat kernel on $p$-forms (we will suppose here that the coefficients are in a strongly acyclic bundle, but with more work one can see that the definition carries over to the general case, see \cite{Park},\cite{MP}). One defines the analytic torsion as in the compact case, by putting: 
\[
T_R(M) = \sum_{p=0}^3 p(-1)^p \frac d{ds}\left(\frac 1{\Gamma(s)}\int_0^{t_0}\otr_R(K_t^p)t^s\frac{dt}t\right)_{s=0} + \int_{t_0}^{+\infty} \otr_R(K_t^p)\frac{dt}t
\]
which does not depend on $t_0 > 0$. The justification of this definition uses meromorphic continuation and is highly nontrivial, see \ref{torsion} below or \cite{MP} for the  details needed to ensure the convergence of the integrals and their analytic continuation. In a sequence of manifolds we study the first summand using the geometric side of the trace formula and the second one using the spectral side, as in \cite[Section 4]{BV}. The spectral side is dealt with using the uniform spectral gap property established there; however the part coming from the continuous spectrum causes some additional difficulty which explains the conditionality of our approximation on the hypothesis \ref{cond_entr} on intertwining operators which we were not able to check for general sequences. The study of the geometric side is actually quite simple once the asymptotic expansion for $K_t^p$ at $t\to 0$ has been established (see Proposition \ref{dea}) using our unrefined trace formula. We remark that in \cite{thesis} we dealt with these problems in the more general context of finite-volume hyperbolic good orbifolds---the elliptic terms in the trace formula do not cause any real additional difficulty. 

We also show that under hypotheses (very) slightly more restrictive as for the approximation of analytic torsion there is an asymptotic equality between absolute analytic torsion for the truncated manifold $M^Y$ and regularised analytic torsion for the complete manifold, cf. Theorem \ref{Main2} below. As in the proof of the approximation result we separate into small and large times. We deal with the small-time part in Section \ref{CMA_small}, where we use estimates on the integral of automorphic kernels over the truncated manifolds and a result of W. L\"uck and T. Schick \cite{LS}; for this part we also need to extend the well-known Gaussian bound for the heat kernel (proven for example in \cite[Section 5]{RS}) to the case of the universal covers of truncated manifolds; we explain how to adapt the arguments from loc. cit. in Appendix \ref{heat}. The large times are taken care of in Section \ref{CMA_large}; the main point in the proof is to control the spectral gap for the truncated manifolds (Proposition \ref{ver}) and this is achieved using techniques inspired from F. Calegari and A. Venkatesh \cite[Chapter 6]{CV}.

%%%%%%%%%%%%%%%%%%%%%%%%%%%%%%%%%%%%%%%%%%%%%%%%%%%%%%%%%%%%

\subsubsection{Asymptotic Cheeger-M\"uller theorem and homology growth}

In contrast with the compact case, for our coefficient systems there is usually a nontrivial homology in characteristic 0. Thus, to state and hopefully prove a Cheeger-M\"uller-type equality one needs to define a suitable Reidemeister torsion. This is done by F. Calegari and A. Venkatesh in \cite{CV}, in a manner similar to the regularisation for traces of integral operators. Thus a natural way to prove such an equality would be to apply the Cheeger-M\"uller equality for manifolds with boundary \cite{Bruening_Ma1},\cite{Luck} to the truncated manifolds and to compare both sides with their regularised analogue. 

Here we deal only with the first part of this program, we refer to \cite{moi2} for the applications of the results in the present paper to congruence subgroups and their homology growth. From the asymptotic equality of analytic torsions (Theorem \ref{Main2}) it is not hard to deduce an asymptotic equality with the absolute Reidemeister torsion of the truncated manifold using a recent generalisation by J. Br\"uning and X. Ma of the Cheeger-M\"uller theorem, see Theorem \ref{CMA}.

%%%%%%%%%%%%%%%%%%%%%%%%%%%%%%%%%%%%%%%%%%%%%%%%%%%%%%%%%%%%

\subsection{Remarks}

\subsubsection{Non-unipotent holonomies}
\label{non_unip_rem}
In the case where condition (vii) on the holonomies of peripheral subgroups is not satisfied both Theorem \ref{Main_intro1} and \ref{MainIntro2} still hold. To prove this one must consider two cases depending on whether $n_2-n_1$ is even or odd. In the first case the representation $\SL_2(\CC)\to\SL(V(n_1,n_2))$ factors through $\PSL_2(\CC)$ and it makes no difference whether or not (vii) holds. When $n_1-n_2$ is odd the heat kernels become integrable in the `bad' cusps whose fundamental group has an holonomy containing elements of trace $-2$ (note that if all cusps are such, the heat kernel is in fact trace-class). The parabolic summand for the trace formula in Theorem \ref{Selberg} changes a bit (see \cite[3.5]{thesis}), but the estimates used all along the proofs in \ref{sec:small_time_approx} and \ref{CMA_small} can still be used. The proof of Proposition \ref{ver} still holds since in the bad cusps the eigenfunctions decay exponentially. 

\subsubsection{Related recent results}

In addition to the papers \cite{BV} and \cite{7S} from which this work originates there have been other papers dealing with similar problems. There has been a number of papers studying the asymptotic behaviour of analytic torsion of a compact manifold as the coefficient systems varies. This was done independently and concurrently, with different methods, on the one hand by W. M\"uller and J. Pfaff (starting with \cite{Muller2}) and on the other, in a more general setting, by J.M. Bismut, X. Ma and W. Zhang \cite{BMZ}. This has been extended to the noncompact setting (based on the work of M\"uller--Pfaff) by P. Menal-Ferrer and J. Porti \cite{MFP} and by W. M\"uller and J. Pfaff \cite{MP}. 

Cheeger--M\"uller type equalities for manifolds with cusps (and more general singularities) have attracted a lot of interest recently. Let use cite some papers which are close to our topic here: \cite{Pfaff}, \cite{Pfaff2}, \cite{Albin_Rochon_Sher}. 

\subsubsection{Analytic torsion here and in \cite{MP}}
\label{diff_MP}

Though we use the same definition of analytic torsion as W. M\"uller and J. Pfaff do in \cite{MP}, there is a slight difference in setting between their paper and ours, which we will explain here. In the present work, one starts from an hyperbolic manifold $M$ and assign it an arbitrary parametrisation of the cuspidal components of its thin part; if $M$ has finite hyperbolic volume we use these functions to derive a trace formula which is then used to define the regularised analytic torsion. In \cite{MP}, one starts from a lattice $\Gamma$ in $\SL_2(\CC)$, chooses representatives for the conjugacy classes of parabolics and then defines height functions on the quotient by choosing a point in $\HH^3$ (the fixed point of $\SU(2)$) and assigning to it height 1  for all the parabolics. Then \cite{MP} use existing forms of the trace formula to define the regularised analytic torsion, which is the same as the one we define here using these particular height functions. 

\subsubsection{About \cite{thesis}}

As noted above, in the Ph.D. thesis of the author some of the problems here were tackled in greater generality, rendering assumption (vii) unnecessary and also dealing with orbifolds. However, there are some very embarassing (to the author) and serious gaps in this manuscript (especially in a previous version of Proposition \ref{ver}), which nevertheless do not affect the validity of the results we quote (and which are filled in the present work). 

%%%%%%%%%%%%%%%%%%%%%%%%%%%%%%%%%%%%%%%%%%%%%%%%%%%%%%%%%%%%

\subsection*{Acknowledgments} A first version of this paper was written while I benefited from a doctoral grant from the Universit\'e Pierre et Marie Curie (Paris 6). The present version was written while I was a post-doc at the Max-Planck Institut f\"ur Mathematik in Bonn. 

The reading of a preliminary version of \cite{CV} has been extremely profitable for the writing of this paper and I want to thank the authors for allowing me to read it. During the redaction I became more and more permeated by the point of view of Benjamini--Schramm convergence introduced in the joint work (with Mikl\'os Ab\'ert, Nicolas Bergeron, Ian Biringer, Tsachik Gelander, Nikolay Nikolov and Iddo Samet) \cite{7S}. I also benefited greatly from a week spent in Bonn with Werner M\"uller and Jonathan Pfaff, whose comments on previous versions of this paper were especially useful and thorough, and who pointed out a serious gap in a previous approach to Proposition \ref{terme3}. A pair of anonymous referees provided helpful suggestions for improving the presentation and spotted numerous mistakes. Last but not least I want to thank my Ph.D. advisor, Nicolas Bergeron, under whose supervision this work was conceived.

%%%%%%%%%%%%%%%%%%%%%%%%%%%%%%%%%%%%%%%%%%%%%%%%%%%%%%%%%%%%%%%%%%%%%%%%%%%%%%%%

\section{Hyperbolic manifolds and Benjamini--Schramm convergence}

\label{hyp_mfd}
Let $G=\SL_2(\CC)$, so that $K=\SU(2)$ is a maximal compact subgroup and the Riemannian symmetric space $G/K$ is isometric to hyperbolic three--space $\HH^3$, which we will identify here with the Poincar\'e half-space $\CC\times]0,+\infty[$ endowed with the Riemannian metric given by $\frac{dzd\bar{z}+dy^2}{y^2}$ in coordinates $(z,y)$. 

\subsection{Height functions on $\HH^3$}

Define the following subgroups of $G$:
\begin{gather*}
P_\infty=\left\{\begin{pmatrix} a&b\\0&a^{-1}\end{pmatrix},\, a\in\CC^{\times},b\in\CC\right\} , \quad 
N_\infty=\left\{\begin{pmatrix} 1&b\\0&1\end{pmatrix},\,b\in\CC\right\}, \\
A_\infty = \left\{\begin{pmatrix} a&0\\0&a^{-1}\end{pmatrix},\, a\in\RR_+^{\times}\right\}, \quad 
M_\infty = \left\{\begin{pmatrix} e^{i\theta}&0\\0&e^{-i\theta}\end{pmatrix},\, \theta\in[0,2\pi]\right\}.
\end{gather*}
The proper parabolic subgroups of $G$ are the conjugates of $P_\infty$. Let $P=gP_\infty g^{-1}$ be such a subgroup and $N,A,M$ the conjugates of $N_\infty,A_\infty,M_\infty$ by $g$. We call any function that is conjugated by $g$ to the function $\begin{pmatrix} a&0\\0&a^{-1}\end{pmatrix} \mapsto a^2$ on $A_\infty$ a norm on $A$. We have the Langlands decomposition $P=NAM=MAN$ and the Iwasawa decomposition $G=NAK$. A height function on $\HH^3$ at $P$ is then defined to be any function of the form $gK\mapsto |a|$ where $g=nak\in NAK$ and $|.|$ is any norm on $A$. (as an illuminating example take $P=P_{\infty}$, then the height functions at $P$ are of the form $(z,y)\mapsto ty$ for $t\in\RR_+^{\times}$). 

The level sets of a height function at $P$ are called horospheres through $P$; they are isometric to the Euclidean plane $\CC$ and are acted upon simply transitively by the subgroup $N$. Let $y_P$ be a height function at $P$; we may identify $N$ with $\{y_P=1\}\cong\CC$ and we denote by $|n|$ the induced length function on $N$. If we normalise the Haar measure $dn$ on $N$ so that it is the pullback of the Lebesgue measure on $\CC$, then the volume form of $\HH^3$ is equal to $dndy_P/y_P^3$. For $x\in\HH^3$ the quotient $|n|/y_P(x)$ does not depend on the choice of $y_P$ and we have the following estimate for the translation length of unipotent elements. 

\begin{lem} \label{eval}
There exists a function $\ell:[0,+\infty)\rightarrow[0,+\infty)$ such that 
\begin{equation} \label{pointmajo}
d(x,nx) = \ell \left( \frac{|n|}{y_P(x)} \right) 
\end{equation}
for all parabolics $P=MAN$, $n\in N$ and $x\in\HH^3$. Moreover $\ell(r)\gg \log(1+r)$.
\end{lem}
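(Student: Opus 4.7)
The plan is to reduce to the upper half-space model and compute the distance explicitly, then read off the function $\ell$ together with its asymptotics.

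First I would exploit $G$-equivariance to reduce to a canonical parabolic. Since the statement is invariant under conjugation (a conjugation by $g\in G$ acts by isometry on $\HH^3$ and simultaneously transports $P$, $N$, $y_P$ and the chosen norm), it suffices to prove the formula for $P=P_\infty$. The excerpt already observes that the height functions at $P_\infty$ are the functions $(z,y)\mapsto ty$ for $t\in\RR_+^\times$ and that the ratio $|n|/y_P(x)$ does not depend on the chosen norm, so I can further normalize $y_P(z,y)=y$; with this normalization a unipotent element $n=\begin{pmatrix}1&b\\0&1\end{pmatrix}\in N_\infty$ satisfies $|n|=|b|$ and acts on $\HH^3$ by $(z,y)\mapsto (z+b,y)$.

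Next I would use the standard formula for hyperbolic distance in the Poincar\'e half-space, namely
\begin{equation*}
\cosh d\bigl((z_1,y_1),(z_2,y_2)\bigr) = 1 + \frac{|z_1-z_2|^2 + (y_1-y_2)^2}{2y_1y_2}.
\end{equation*}
Applied to $x=(z,y)$ and $nx=(z+b,y)$ this gives
\begin{equation*}
\cosh d(x,nx) = 1 + \frac{|b|^2}{2y^2} = 1 + \frac{1}{2}\left(\frac{|n|}{y_P(x)}\right)^2,
\end{equation*}
which depends only on the ratio $r=|n|/y_P(x)$. Hence the function $\ell$ is forced: I define
\begin{equation*}
\ell(r) = \cosh^{-1}\!\left(1+\tfrac{r^2}{2}\right) = 2\operatorname{arcsinh}(r/2),
\end{equation*}
and the identity \eqref{pointmajo} holds by construction.

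The asymptotic lower bound $\ell(r)\gg\log(1+r)$ is then an elementary calculus check: as $r\to 0$ we have $\ell(r)=r+O(r^3)\sim \log(1+r)$, while as $r\to\infty$ we have $\ell(r)=2\log r+O(1)$, so $\ell(r)/\log(1+r)$ is continuous on $[0,+\infty)$ and bounded away from $0$, yielding a uniform constant $c>0$ with $\ell(r)\ge c\log(1+r)$ for all $r\ge 0$. There is no real obstacle here: once the reduction to $P_\infty$ is made, the whole statement is an explicit computation in the half-space model, and the only mild point requiring care is checking that changing the norm on $A$ does not alter $|n|/y_P(x)$ (which the excerpt has already recorded).
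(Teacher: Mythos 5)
Your proof is correct and follows essentially the same route as the paper: reduce to $P=P_\infty$ by conjugation, compute $d(x,nx)$ explicitly as a function of $|n|/y_P(x)$, and read off $\ell$ together with its growth. A small remark: with the metric $\frac{dz\,d\bar z + dy^2}{y^2}$ the standard distance formula gives $\cosh d(x,nx)=1+\frac{|n|^2}{2y^2}$ as you have it, whereas the expression the paper quotes from the reference works out to twice this distance (one checks $\cosh(d/2)=1+\frac{|n|^2}{2y^2}$ from the paper's displayed formula); the discrepancy is harmless here since the lemma only asserts the lower bound $\ell(r)\gg\log(1+r)$, which is insensitive to a multiplicative constant, and your continuity-at-$0$-plus-growth-at-infinity argument establishes that bound just as well as the paper's explicit inequalities.
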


\begin{proof}
We give a very awkward but very explicit proof. Obviously it suffices to prove the lemma for $P = P_\infty$; for $n = \begin{pmatrix} 1 & z \\ 0 & 1 \end{pmatrix}\in N_{\infty}$ we may take $|n| = |z|$. Let $x \in \HH^3$, $y = y_\infty(x)$. The formula \cite[Corollaire A.5.8]{BP} yields 
\begin{equation} \label{dist} 
d(x,nx) = 2\left(\log\left(1+\sqrt{\frac{|n|^2}{|n|^2+4y^2}}\right)-\log\left(1-\sqrt{\frac{|n|^2}{|n|^2+4y^2}}\right)\right)
\end{equation}
so that $d(x,nx) = \ell(|n|/y)$ where we put 
\[
\ell(r)=2\left(\log\left(1+(1+(r/2)^{-2})^{-\frac  12}\right)-\log\left(1-(1+(r/2)^{-2})^{-\frac 1 2}\right)\right)
\]
It remains to check that $\ell(r) \gg \log(1+r)$: the first summand is in $[0, \log(2)]$, and besides for $t \in [0, +\infty)$ one has $(1 + t^2)^{-1/2} \ge (1+t)^{-1}$ so that 
\[
-\log(1 - (1+t^{-2})^{-1/2}) \ge -\log(1 - (1+t^{-1})^{-1}) = \log(1 + t).
\]
from which the conclusion follows at once.
\end{proof}

%%%%%%%%%%%%%%%%%%%%%%%%%%%%%%%%%%%%%%%%%%%%%%%%%%%%%%%%%%%%

\subsection{Height functions on hyperbolic three--manifolds} \label{height_quotient}

Let $\Gamma$ be a lattice in $G$ (i.e. $\Gamma$ is discrete and  $\Gamma \backslash G$ carries a finite right-$G$-invariant Borel measure). Given a parabolic subgroup $P$ we put $\Gamma_P = \Gamma\cap P$ and we say that $P$ is $\Gamma$-rational if $\Gamma_P$ contains a subgroup isomorphic to $\ZZ^2$ (equivalently $\Gamma\cap N$ is cocompact in $N$). Then $\Gamma$ is cocompact if and only if there are no $\Gamma$-rational parabolics (equivalently if $\Gamma$ contains no unipotent elements). In any case there are finitely many $\Gamma$-conjugacy classes of $\Gamma$-rational parabolics. We may thus choose representatives $P_1,\ldots,P_h$ for these classes and height functions $y_{P_1},\ldots,y_{P_h}$ at each one of then, and define a function $y_j$ on $\HH^3$ by 
\[
y_j(x) = \max_{\gamma\in\Gamma/\Gamma_{P_j}} y_{P_j}(\gamma^{-1} x)
\]
which we call a $\Gamma$-invariant height function (and which is, indeed, $\Gamma$-invariant). If $\Gamma$ is torsion-free let $M$ be the manifold $\Gamma\backslash\HH^3$ and for $Y\in(0,+\infty)^h$ put :
\begin{equation} \label{tron}
M^Y = \{x\in M :\: \forall j=1,\ldots,h \text{ we have } y_j(x)\le Y_j\}. 
\end{equation}
Then for $Y$ large enough (depending on the choice of the original height functions $y_{P_j}$) $M^Y$ is a compact manifold with boundary a union of flat tori $T_j,\, j=1,\ldots,h$. The ends $\{x\in M,\, y_j(x)\ge Y_j\}$ are isometric to the warped products $T_j\times(Y_j,+\infty)$ with the metrics $\frac{dx^2+dy_j^2}{y_j^2}$ where $dx^2$ is the euclidean metric on $T_j$. \emph{In this paper we will work under the following convention:} we always suppose that the height functions are normalised so that the maps $\Gamma_{P_j}\backslash \{ y_{P_j}\ge 1 \}\to M$ are embeddings (in particular, the horospheres of height one are disjoint). 

Finally, if $\Gamma'\subset\Gamma$ is a finite-index subgroup the $\Gamma$-invariant height functions are $\Gamma'$-invariant; when dealing with a sequence of finite covers of a given manifold (or orbifold) we will always suppose that the height functions on the covers come from those of the covered manifold.

%%%%%%%%%%%%%%%%%%%%%%%%%%%%%%%%%%%%%%%%%%%%%%%%%%%%%%%%%%%%

\subsection{Euclidean lattices} \label{euclidien}

Let $\Lambda$ be a lattice in $\CC$ ; we denote by $\vol(\Lambda)$ its covolume (i.e. the volume of a fundamental parallelogram) and define
\[
\alpha_1(\Lambda) = \min\{|v| \: :\: v\in\Lambda, v\not=0 \}
\]
and for any $v_1\in\Lambda$ such that $|v_1|=\alpha_1(\Lambda)$
\[
\alpha_2(\Lambda) = \min\{|v| \: :\: v\in\Lambda, v\not\in \ZZ v_1 \}.
\]
Then the ratio $\alpha_2(\Lambda) / \alpha_1(\Lambda)$ depends only on $\Lambda$ up to similarity. We denote by $\mathcal{N}_\Lambda(r)$ the number of points in $\Lambda$ of absolute value less than $r$ and
\[
\mathcal{N}_{\Lambda}^*(r)=\mathcal{N}_{\Lambda}(r) - 1 = |\{ v \in \Lambda \setminus \{ 0 \} : |v| \le r \}|. 
\]
We will use $\mathcal{N}^*$ rather than $\mathcal{N}$ further on; moreover we get a cleaner bound in the lemma below. The following estimate for the counting function was proven by Gauss; we include a proof only for the reader's convenience and because we need a precise statement with regard to the constants.

\begin{lem} \label{ucount}
Define :
\[
E_\Lambda (r) := \mathcal{N}_{\Lambda}^*(r)-\frac{\pi r^2}{\vol(\Lambda)}.
\] 
For any lattice $\Lambda\subset\CC$ we have the estimate
\begin{equation} \label{count}
|E_\Lambda (r)| \ll \frac r {\alpha_1(\Lambda)} + \frac{\alpha_2(\Lambda)}{\alpha_1(\Lambda)}
\end{equation}
where the constant does not depend on $\Lambda$. 
\end{lem}

\begin{proof}
First we consider $r< \alpha_1(\Lambda)$ so that $E_\Lambda(r) = \pi r^2/\vol(\Lambda)$. By Minkowski's First Theorem, if $\pi r^2\ge 4\vol(\Lambda)$ then $\Lambda$ contains a nonzero vector of length $\le r$, which implies that the quotient $\vol(\Lambda)/\alpha_1(\Lambda)^2 \ge \pi/4$. Thus $E_\Lambda(r)\ll (r/\alpha_1(\Lambda))^2\le r/\alpha_1(\Lambda)$. 

Now suppose that $r\ge \alpha_1(\Lambda)$. We can choose a fundamental parallelogram $\Pi$ for $\Lambda$ whose diameter $d$ is $\asymp \alpha_2(\Lambda)$ and by Minkowski's second theorem we have $\vol(\Lambda) \asymp \alpha_2(\Lambda)\alpha_1(\Lambda)$. For $r\ge d$ let $z_1,\ldots,z_n$ be all the points in $\Lambda$ such that $|z_k|\le r$, then we have that $B(0,r-d)\subset \bigcup_k z_k+\Pi\subset B(0,r+d)$ so that $\pi(r-d)^2 \le \vol(\Pi)n \le \pi (r+d)^2$. It follows that:
\begin{align*}
\left|\mathcal{N}_\Lambda(r)-\frac{\pi r^2}{\vol(\Lambda)}\right| &\le \frac{d^2}{\vol(\Lambda)} + \frac{2rd}{\vol(\Lambda)} \ll \frac{\alpha_2(\Lambda)^2}{\vol(\Lambda)} + \frac r{\alpha_1(\Lambda)} \\
       & \ll \frac{\alpha_2(\Lambda)}{\alpha_1(\Lambda)} + \frac r{\alpha_1(\Lambda)}
\end{align*} 
which finishes the proof of \eqref{count}. 
\end{proof}

We say that a set $S$ of euclidean lattices is {\em uniform} if there exists a $C>0$ such that 
\begin{equation} \label{systole}
\forall \Lambda\in S, \: \vol(\Lambda)\le C\alpha_1(\Lambda)^2 
\end{equation}
By Mahler's criterion this is equivalent to asking that when we normalise the lattices in $S$ so that they are unimodular they form a relatively compact set in $\SL_2(\RR)/\SL_2(\ZZ)$. If $\Lambda$ belongs to a uniform set $S$ then the proof above yields that
\begin{equation} \label{?}
|E_\Lambda(r)| \ll \frac r{\alpha_1(\Lambda)}
\end{equation}
with a constant depending only on $S$. 

%%%%%%%%%%%%%%%%%%%%%%%%%%%%%%%%%%%%%%%%%%%%%%%%%%%%%%%%%%%%

\subsection{Cusp-uniform sequences}
\label{cusp-unif}

If $P$ is a parabolic subgroup and $y_P$ is a height function at $P$ then we may identify the unipotent radical $N$ of $P$ with the horosphere $y_P=1$ and the conformal structure on $N$ thus obtained does not depend on the chosen $y_P$. Since the uniformity of a set only depends on the conformal structures of its elements we may define a cusp-uniform sequence as a sequence of lattices $\Gamma_n\subset G$ such that the set 
\[
\{ (\Gamma_n)_P, \, n\ge 1,\, P \text{ is a $\Gamma_n$-rational parabolic} \}
\]
is a uniform set of euclidean lattices. The following result gives a source of examples satisfying some the geometrical conditions of our main results. 

\begin{prop} \label{cuex}
Let $\Gamma\subset G$ be a lattice, then there exists a cusp-uniform sequence $\Gamma_n\subset\Gamma$ which exhausts $\Gamma$ and satisfies in addition that $h_n\ll (\vol M_n)^{1-\delta}$ for some $\delta>0$. 
\end{prop}

\begin{proof}
It is well-known that up to conjugation we may assume $\Gamma\subset\SL_2(F)$ for some number field $F$. Let $\so_F$ be the ring of integers of $F$; as $\Gamma$ is finitely generated there exists an $a\in\mathcal{O}_F$ such that $\Gamma\subset \SL_2(\mathcal{O}_F[a^{-1}])$. For an ideal $\fri\subset\mathcal{O}_F$ coprime to $a$ we may define $\Gamma(\fri)$ as the set of matrices in $\Gamma$ congruent to the identity modulo $\fri$. Then the sequence of $\Gamma(n)$ for $n\in\NN$ coprime to $a$ is clearly exhaustive and we claim that it is cusp-uniform. Indeed, if $P$ is a $\Gamma$-rational parabolic we have $\Gamma_P=1+\mathbb{Z}X_1+\mathbb{Z}X_2$ for some $X_1,X_2$ in the Lie algebra $\LSL_2(\mathcal{O}_F[a^{-1}])$. Let $\fri$ be the ideal in $\mathcal{O}_F[a^{-1}]$ generated by the entries of $X_1$ and $X_2$ and $m$ the unique positive rational integer such that $\fri\cap\ZZ=m\ZZ$. Put $\Lambda_n=n\Gamma_P$; then the $\Lambda_n$ are a uniform family of lattices in $N$ and we have $\Lambda_n\subset\Gamma(n)_P\subset m^{-1}\Lambda_n$, so that $\{\Gamma(n)_P, n\}$ is uniform as well. Since the subgroups $\Gamma(n)$ are normal in $\Gamma$ we need only consider a finite number of $P$ and the claim of cusp-uniformity follows. 

For all $\Gamma$-rational parabolic $P$ we have 
\[
[\Gamma_P : \Gamma(n)_P] \ge C[\Lambda_1:\Lambda_n] \ge Cn^2. 
\]
On the other hand, if $P_1,\ldots,P_{h_n}$ are representants for the conjugacy classes of $\Gamma(n)$-rational parabolics we have
\[
[\Gamma:\Gamma(n)] = \sum_{j=1}^{h_n} [\Gamma_{P_j}:\Gamma(n)_{P_j}] \ge Ch_nn^2
\]
We have finally $[\Gamma:\Gamma(n)]\le |\SL_2(\so_F/n)| \le n^{3[F:\QQ]}$ and it follows that $n^2 \ge [\Gamma:\Gamma(n)]^\delta$ for some $\delta>0$ (depending on $F$) so that we get $h_n\ll [\Gamma:\Gamma(n)]^{1-\delta}$. 
\end{proof}

%%%%%%%%%%%%%%%%%%%%%%%%%%%%%%%%%%%%%%%%%%%%%%%%%%%%%%%%%%%%

\subsection{Some counting lemmas in hyperbolic space}

For this subsection we always denote by $\eps$ the Margulis constant for $\HH^3$. If $\Gamma$ is a finitely generated, discrete subgroup of $G$ we let $\sys(\Gamma)$ denote the systole of $\Gamma\bs\HH^3$, i.e. the smallest translation length of a loxodromic element in $\Gamma$. 

\subsubsection{Orbits}

The following lemma is well-known in the case of groups containing no unipotent isometries, but needs a slight modification to incorporate the general situation. 

\begin{lem}\label{gen_count}
There is an absolute constant $c>0$ such that the following holds: let $\Gamma$ be a torsion-free discrete subgroup in $G$. Let $x\in\HH^3$ and let $\Lambda$ be the subgroup of $\Gamma$ generated by the elements in $\Gamma$ which commute with a unipotent $\eta\in\Gamma$ such that $d(x,\eta x)\le\eps$ (thus $\Lambda$ is a free abelian group of rank $\le 2$). Then there is a $C>0$ depending only on $\sys(\Gamma)$ such that for all $r>0$ we have:
\begin{equation} \label{here_count}
|\{ \gamma\in\Gamma-\Lambda :\: d(x,\gamma x) \le r\}| \le Ce^{cr}
\end{equation}
\end{lem}

This implies in particular the following: for a discrete subgroup $\Gamma$ in $G$ we let $\Gamma_\lox$ be the set of loxodromic elements in $\Gamma$ and for any $x\in\HH^3$ and $r>0$ put
\begin{equation} \label{def_count_lox}
\mathcal{N}_{\Gamma}(x,r) = |\{\gamma\in\Gamma_\lox,\, d(x,\gamma x)\le r\}|.
\end{equation}
Then there is a constant $C$ depending only on the systole of $M$ such that:
\begin{equation} \label{loxcount}
\mathcal{N}_{\Gamma}(x,r) \le Ce^{cr}.
\end{equation}

\begin{proof}[Proof of Lemma \ref{gen_count}]
We define:
\[
\delta = \delta(x, r) = \frac 1 2 \min \bigl( d(\gamma x,\gamma' x) :\: \gamma,\gamma'\in\Gamma-\Lambda,\, d(x,\gamma x), d(x,\gamma' x) \le r \bigr).   
\]
The balls $B(\gamma x,\delta)$ for $\gamma\in\Gamma-\Lambda,\, d(x,\gamma x)\le r$ are pairwise disjoint. Moreover their union is contained in the ball $B(x, r + \delta)$. It follows that the right-hand side in \eqref{here_count} is smaller than $V(r+\delta)V(\delta)^{-1}$ where $V(R)$ denote the volume of a ball of radius $R$ in $\HH^3$. We have $r^d\le V(r) \le e^{c_0r}$ for an absolute $c_0$ and thus the lemma follows from the claim that for any $x\in\HH^3$ we have $\delta\ge Ce^{-r}$ for some $C>0$ depending only on $\sys(\Gamma)$. 

To prove this we may suppose that $\Gamma x$ lies in a noncompact component of the $\eps$-thin part $M_{\le\eps}$ of $M=\Gamma\bs\HH^3$ (otherwise $\delta\ge \min(\eps,\sys(\Gamma))$). We let $H$ be the horosphere preserved by $\Lambda$ (defined as in the statement) lifting the component of $\pl M_{\le \eps}$ closest to $\Gamma x$, and we claim that $\delta(x,r) = +\infty$ for all $r\le d(x,H)$ and $\delta(x,r)\ge e^{-d(x,H)}$ for $r\ge d(x,H)$, from which the original claim immediately follows. To prove the newest claim we first observe that any $\gamma\in\Gamma-\Lambda$ must move $x$ outside of the horoball bounded by $H$, hence the first part of the claim. Now the Euclidean displacement of an element of $\Lambda$ on the horosphere through $x$ is at least $e^{-d(x,H)}\eps$ (since its displacement on $H$ is at least $\eps$), hence $\inj_x(M)\ge C'\eps e^{-d(x,H)}$ for an absolute $C'$ by Lemma \ref{eval}. We have $\delta(x,r)\ge\inj_x(M)$ for any $r$ and the second part of the claim follows. 
\end{proof}

%%%%%%%%%%%%%%%%%%%%%%%%%%%%%%

\subsubsection{Horospheres}

We let ${\mathcal B}$ be a collection of disjoint (closed) horoballs in $\HH^3$ and $\sh$ be the collections of horospheres $\{\pl B:\: b\in{\mathcal B}\}$. For a point $w\in \HH^3$ and a radius $R>0$ we denote by $N_\sh(w,R)$ the number of horoballs in ${\mathcal B}$ which are at a distance smaller than $R$ from $w$. 

\begin{lem}\label{counthoro2}
There are absolute constants $C,c$ such that for any $\sh$ as above and any $w\in \HH^3$ we have:
\begin{equation} \label{counthoro}
{\mathcal N}_\sh(w,R) := |\{H\in\sh :\: d(w,H) \le R\}| \le Ce^{cR}
\end{equation}
\end{lem}

\begin{proof}
Let $V(R)$ denote the volume of a ball of radius $R$ in $\HH^3$. We claim that:
\begin{equation} \label{hsj}
N_\sh(w,R)\cdot V(1) \le V(R+2). 
\end{equation}
Since $V(R)\le e^{cR}$ for some $c$ the lemma follows. 

The proof of \eqref{hsj} is staightforward: let $N=N_\sh(w,R)$ and $B_1,\ldots,B_N\in\sh$ be the horoballs meeting the ball of radius $R$ around $w$. For $i=1,\ldots,N$ take a $x_i\in\pl B_i$ such that $d(x_i,w)\le R$ and let $x_i'$ be the point at distance 1 from $x_i$ along the inwards normal to $\pl B_i$ at $x_i$; finally, et $U_i$ be the ball of radius $1$ around $x_i'$. Then the balls $U_i$ are disjoint (since $U_i\subset B_i$ and the $B_i$ themselves are disjoint) and contained in the ball of radius $R+2$ around $w$; it follows that we have 
\[
V(R+2) \ge \sum_i \vol (U_i) = N\cdot V(1)
\]
which finishes the proof of \eqref{hsj}
\end{proof}

%%%%%%%%%%%%%%%%%%%%%%%%%%%%%%%%%%%%%%%%%%%%%%%%%%%%%%%%%%%%

\subsection{Benjamini--Schramm convergence for manifolds with cusps} \label{BS}

Let $M=\Gamma\backslash\HH^3$ be an hyperbolic three--manifold and let $x\in M$. Pick an arbitrary lift $\wdt x$ of $x$ to $\HH^3$ and define
\[
\ell_x = \min\{d(\wdt x,\gamma\wdt x), \: \gamma\in\Gamma,\gamma\not= 1_G\} = 2\inj_x(M).
\]
For $R>0$ we define the following subset of $M$:
\[
M_{\le R} = \{x\in M_n: \ell_x \le R/2\}
\]
Recall from \cite{7S} that a sequence $M_n$ is said to converge to $\HH^3$ in the Benjamini--Schramm topology (hereafter abreviated as $M_n$ BS-converges to $\HH^3$) if for any $R>0$ we have 
\begin{equation} \label{defBS}
\frac {\vol \left( (M_n)_{\le R} \right)} {\vol (M_n)} \xrightarrow[n\to\infty]{} 0.
\end{equation}
A source of examples is given by sequences where the injectivity radius goes to infinity; for example $M_n=\Gamma_n\backslash\HH^3$ where $\Gamma_n$ is an exhaustive sequence of torsion-free finite-index subgroups of a lattice $\Gamma$ (a sequence $\Gamma_n\subset\Gamma$ is said to exhaust $\Gamma$ if any $\gamma\in\Gamma$ belongs to at most a finite number of the $\Gamma_n$). Another is given by sequences of congruence lattices (see \cite{7S},\cite{moi2}). It follows from Proposition \ref{cuex} that every hyperbolic three--manifold has a sequence of finite covers that is BS-convergent to $\HH^3$ and cusp-uniform. 

In the sequel we will always consider a sequence $M_n=\Gamma_n\bs\HH^3$ of finite-volume hyperbolic three--manifolds. We will denote by $\Lambda_{n,j},\, j=1,\ldots,h_n$ the Euclidean lattices corresponding to the cusps of $M_n$, which are well-defined up to similarity. Recall that we have defined the counting function $\mathcal{N}_\Gamma$ in \eqref{def_count_lox}. 

\begin{lem} \label{hypBS}
The sequence $M_n$ is BS-convergent to $\HH^3$ if and only if 
\begin{equation} \label{hypBSloc}
\forall r>0, \: \int_{M_n}\mathcal{N}_{\Gamma_n}(x,r) dx = o(\vol M_n) 
\end{equation}
and 
\begin{equation} \label{sumcusp}
\sum_{j=1}^{h_n} \frac {\alpha_2(\Lambda_{n,j})} {\alpha_1(\Lambda_{n,j})} = o(\vol M_n).
\end{equation}
\end{lem}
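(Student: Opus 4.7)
The plan is to split $\Omega_n(R):=\{x\in M_n:\ell_x\le R\}$---whose sublinearity in $\vol M_n$ for all $R$ is precisely BS-convergence---into a \emph{cuspidal} piece $\Omega_n^{\cusp}(R)$ (points where the shortest non-trivial loop is realised by a parabolic element of some $\Gamma_{n,P_j}$) and a \emph{loxodromic} piece $\Omega_n^{\lox}(R)$, and to show that the two hypotheses control exactly these two pieces.

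Cuspidal estimate: by Lemma \ref{eval}, $d(x,\eta x)=\ell(|\eta|/y_j(x))$ for $\eta\in\Lambda_{n,j}$, so $\Omega_n^{\cusp}(R)\cap(\text{cusp }j)$ is (up to a contribution from non-embedded points) the horo-neighbourhood $\{y_j(x)\ge \max(1,\alpha_1(\Lambda_{n,j})/\ell^{-1}(R))\}$, with hyperbolic volume $\vol\Lambda_{n,j}/(2\max(1,\alpha_1(\Lambda_{n,j})/\ell^{-1}(R))^2)$. Since this volume and the ratio $\alpha_2/\alpha_1$ are both intrinsic invariants of the cusp (independent of the rescaling of $y_j$), one may pass to the tight normalisation in which $\alpha_1(\Lambda_{n,j})=1$; for $R\ge\ell(1)$ the max then equals $1$ and the volume per cusp reduces to $\vol\Lambda_{n,j}/2\asymp \alpha_2(\Lambda_{n,j})/\alpha_1(\Lambda_{n,j})$ by Minkowski's second theorem. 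Summing over cusps yields, uniformly in $n$ and for $R$ large,
\begin{equation*}
\vol\Omega_n^{\cusp}(R)\asymp_R \sum_{j=1}^{h_n}\frac{\alpha_2(\Lambda_{n,j})}{\alpha_1(\Lambda_{n,j})}.
\end{equation*}

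Loxodromic estimate: Markov applied to $\mathcal{N}_{\Gamma_n}(\cdot,R)\ge 1$ on $\Omega_n^{\lox}(R)$ gives $\vol\Omega_n^{\lox}(R)\le \int_{M_n}\mathcal{N}_{\Gamma_n}(x,R)\,dx$. For the reverse comparison I split the integral by the $\eps$-thick/thin decomposition: on the $\eps$-thick part, a disjoint-balls packing argument for the orbit $\Gamma_n\tilde x\subset B(\tilde x,R+\eps/2)\subset\HH^3$ gives a uniform bound $\mathcal{N}_{\Gamma_n}(x,R)\le C_{R,\eps}$; on a Margulis tube around a short primitive geodesic of length $\ell_0$, the hyperbolic-trigonometric inequality $d(x,\gamma_0^k x)^2\ge(k\ell_0)^2+c\sinh^2 d(x,\mathrm{axis})$ lets one evaluate $\int_{\mathrm{tube}}\mathcal{N}_{\Gamma_n}(x,R)dx\ll_R \vol(\mathrm{tube})$; and deep in any cusp no loxodromic element moves $x$ by less than $R$. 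Summing, $\int_{M_n}\mathcal{N}_{\Gamma_n}(x,R)dx\ll_R \vol\Omega_n(R)$.

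The implication $(\Leftarrow)$ then follows by combining the upper bounds with the hypotheses, and $(\Rightarrow)$ by combining the lower bounds with BS-convergence. The main technical obstacle is the uniform control of $\int_{\mathrm{tube}}\mathcal{N}_{\Gamma_n}$ on Margulis tubes whose core geodesics can have arbitrarily short length: the count grows like $R/\ell_0$ and the tube's axial extent shrinks correspondingly, so the two must be balanced via the explicit hyperbolic-trigonometric formula for displacement. Everything else is bookkeeping with the Minkowski inequalities from Section \ref{euclidien} and the rescaling-invariance of $\alpha_2/\alpha_1$.
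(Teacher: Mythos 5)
Your proof follows the same basic decomposition as the paper — the thin part $\Omega_n(R)$ is split into a loxodromic piece and a cuspidal piece, and the two hypotheses \eqref{hypBSloc} and \eqref{sumcusp} are matched to these via Markov on one side and horoball volume estimates on the other — but you carry out the loxodromic step with substantially more care than the paper does. For the ``$\Rightarrow$'' direction and \eqref{hypBSloc}, the paper simply asserts a pointwise bound $\mathcal{N}_{\Gamma_n}(x,r)\le Ce^{cr}$ with constants ``depending on the injectivity radius of $M$''; this is only valid uniformly if the $\Gamma_n$ have a uniform lower bound on loxodromic systole (e.g.\ if they are subgroups of a fixed $\Gamma$), and indeed the remark immediately after the paper's proof concedes the point and refers the general case to \cite[Section 9]{7S}. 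Your $\eps$-thick/thin split together with the estimate $\int_{\mathrm{tube}}\mathcal{N}_{\Gamma_n}(\cdot,R)\ll_R \vol(\mathrm{tube})$ is precisely the content needed to close that gap for arbitrary BS-convergent sequences, and a computation with the explicit displacement formula $\cosh d_\rho(\gamma_0^k)=\cosh(k\ell_0)+\sinh^2\rho\,(\cosh(k\ell_0)-1)$ confirms that the ratio $\int_{\mathrm{tube}}\mathcal{N}/\vol(\mathrm{tube})$ stays bounded in terms of $R$ and the Margulis constant, independently of $\ell_0$. (Your quoted inequality $d^2\ge(k\ell_0)^2+c\sinh^2\rho$ is not quite the right form — the growth in $\rho$ is exponential, not quadratic — but this only strengthens the estimate and does not affect the conclusion.) On the cuspidal side your two-sided estimate $\vol\Omega_n^{\cusp}(R)\asymp_R\sum_j\alpha_2/\alpha_1$ is what the paper proves one direction of via the Margulis lemma and the inequality $\eps\le c\,\alpha_1(T_j)$; both rest on Minkowski's second theorem and the conformal invariance of $\alpha_2/\alpha_1$. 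One place to tidy up: the phrase ``up to a contribution from non-embedded points'' is doing real work in the upper bound $\vol\Omega_n^{\cusp}(R)\ll_R\sum\alpha_2/\alpha_1$, since for a fat cusp the cuspidal thin part can dip below the height-$1$ horoball and overlap other thin parts; this is harmless for the implication you actually need in each direction, but it would be worth making explicit.
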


\begin{proof}
We won't use the `if' statement in the remainder of this paper, and its proof is straighforward. Suppose now that the sequence $M_n$ is BS-convergent to $\HH^3$. If we suppose in addition that the systole of the $M_n$ is bounded away from $0$ then \eqref{hypBSloc} follows immediately from \eqref{loxcount}: for any $r>0$ we have 
\[
\int_{M_n} \mathcal{N}_{\Gamma_n}(x,r) dx  = \int_{(M_n)_{\le r}} \mathcal{N}_{\Gamma_n}(x,r) dx  \le Ce^{cr}\vol(M_n)_{\le r}
\]
where $C$ does not depend on $n$, and the right hand-side is an $o(\vol M_n)$ by the definition of BS-convergence. In general, we obtain from this resoning the conclusion that for any $\delta>0$ the part of the integral in \eqref{hypBSloc} on the $\delta$-thick part of $M_n$ is an $o(\vol M_n)$. The proof that \eqref{hypBSloc} holds in general then depends on a fine analysis of the orbits of points in $\HH^3$ mapping to the $\delta$-thin part of $M$ (for $\delta$ smaller than the Margulis constant) which is carried out in \cite[Section 7]{7S}. 

We finally establish \eqref{sumcusp} when $M_n$ is BS-convergent to $\HH^3$: let $\eps>0$ be the Margulis constant for $\HH^3$, and let $C_1,\ldots,C_h$ be the noncompact components of $(M_n)_{\le\eps}$. The boundaries of the $C_j$ are Euclidean tori $T_1,\ldots,T_j$ and we have $\eps\le c\alpha_1(T_j)$ for some absolute $c>0$ ; it follows that 
\[
\vol(T_j) \gg \alpha_1(T_j)\alpha_2(T_j) \gg \eps^2\frac{\alpha_2(\Lambda_j)}{\alpha_1(\Lambda_j)}
\]
where $\Lambda_j$ is the lattice in $\CC$ corresponding to $T_j$ (whose conformal class is well-defined). It follows that 
\[
\vol (M_n)_{\le\eps} \gg \sum_{j=1}^h \vol(T_j) \gg \eps^2 \sum_{j=1}^h \frac{\alpha_2(\Lambda_j)}{\alpha_1(\Lambda_j)}
\]
hence the right-hand side must be an $o(\vol M_n)$ which is precisely the content of \eqref{sumcusp}. 
\end{proof}

We record as a separate fact the following weaker consequence of \eqref{sumcusp}. 

\begin{lem} \label{nbcuspsBS}
Let $M_n$ be a sequence of finite-volume hyperbolic three--manifolds, $h_n$ the number of cusps of $M_n$. If $M_n$ BS-converges to $\HH^3$ then $h_n = o(\vol M_n)$. 
\end{lem}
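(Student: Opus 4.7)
The plan is to deduce this immediately from the preceding Lemma \ref{hypBS} (specifically, from condition \eqref{sumcusp}), using only the trivial bound $\alpha_2(\Lambda) \ge \alpha_1(\Lambda)$.

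First I would recall that by definition $\alpha_2(\Lambda_{n,j})$ is the minimal length of a vector in $\Lambda_{n,j}$ not colinear with a shortest vector $v_1$; in particular $\alpha_2(\Lambda_{n,j}) \ge \alpha_1(\Lambda_{n,j})$, so each ratio $\alpha_2(\Lambda_{n,j})/\alpha_1(\Lambda_{n,j})$ is at least $1$. Hence
\begin{equation*}
h_n \le \sum_{j=1}^{h_n} \frac{\alpha_2(\Lambda_{n,j})}{\alpha_1(\Lambda_{n,j})}.
\end{equation*}

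Then I would invoke Lemma \ref{hypBS}: since $M_n$ BS-converges to $\HH^3$, the right-hand side above is an $o(\vol M_n)$ by \eqref{sumcusp}. Combining the two displays gives $h_n = o(\vol M_n)$, as required. There is no real obstacle here; the lemma is essentially a qualitative corollary of the quantitative control over the thin parts of the cusps already contained in the previous lemma.
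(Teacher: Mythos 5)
Your argument is correct and is exactly the one the paper has in mind: the paper introduces Lemma \ref{nbcuspsBS} as ``a separate fact the following weaker consequence of \eqref{sumcusp},'' and the reduction you give (each term in the sum is at least $1$ since $\alpha_2\ge\alpha_1$, so $h_n$ is bounded above by the sum) is the intended deduction.
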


When we assume cusp-uniformity we only need to look at the behaviour of closed geodesics; we have the following criterion for a sequence of cusp-uniform hyperbolic three--manifolds to BS-converge. The direct implication is contained in Lemma \ref{hypBS} above and the converse is proved in \cite[Proposition 4.7]{thesis}. 

\begin{lem}
Let $M_n$ be a cusp-uniform sequence of finite covers of a hyperbolic three--manifold $M$. Then $M_n$ BS-converges to $\HH^3$ if and only if condition \eqref{hypBSloc} holds.
\end{lem}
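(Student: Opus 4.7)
The forward direction (BS-convergence to $\HH^3$ implies \eqref{hypBSloc}) is contained in Lemma \ref{hypBS}, so only the converse requires proof. Suppose that $M_n$ is cusp-uniform and satisfies \eqref{hypBSloc}; I want to deduce BS-convergence. By Lemma \ref{hypBS}, it suffices to establish \eqref{sumcusp}. Cusp-uniformity yields $\alpha_2(\Lambda_{n,j})\asymp\alpha_1(\Lambda_{n,j})$ uniformly in $(n,j)$ (the lower bound being the trivial $\alpha_2\geq\alpha_1$ and the upper bound coming from the uniformity constant), so
\[
\sum_{j=1}^{h_n}\frac{\alpha_2(\Lambda_{n,j})}{\alpha_1(\Lambda_{n,j})}\asymp h_n,
\]
and \eqref{sumcusp} reduces to the estimate $h_n = o(\vol M_n)$.

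I would prove $h_n = o(\vol M_n)$ by contradiction. If $h_n/\vol M_n$ does not tend to zero, then along a subsequence the average ramification $d_n/h_n$ of the cover $M_n\to M$ at cusps is bounded, where $d_n = \vol M_n / \vol M$. Cusp-uniformity then confines the cusp sublattices $(\Gamma_n)_{P'}$ to a finite family of sublattices of the cusp subgroups $\Gamma_{P_i}$ of $\Gamma$ (for $i=1,\ldots,h$). By pigeonhole, infinitely many $\Gamma_n$ contain a common sublattice $L_i\subset\Gamma_{P_i}$ of finite index for each $i$; the subgroup $H=\langle L_1,\ldots,L_h\rangle\subset\Gamma$ then lies in all those $\Gamma_n$. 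Products of nontrivial elements from distinct $L_i$'s produce loxodromic elements of $\Gamma$ in $H$, and hence in $\Gamma_n$ for infinitely many $n$ along the subsequence. Each such loxodromic element $\gamma$ of fixed translation length contributes a tube in $M_n$ to $\int_{M_n}\mathcal{N}_{\Gamma_n}(x,R)\,dx$ (for some fixed $R$) of volume proportional to $\vol M_n$, since the corresponding closed geodesic of $\Gamma\backslash\HH^3$ lifts to closed geodesics of $M_n$ whose total length scales with the covering degree $d_n$. This contradicts \eqref{hypBSloc}.

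The main obstacle is handling rigorously the combinatorics of which loxodromic elements are trapped in $\Gamma_n$ under cusp-uniformity—in particular the case of a single cusp $h=1$, where one must mix the common cusp sublattice with a loxodromic element of $\Gamma$ in order to produce loxodromic elements of $\Gamma_n$—together with the uniform estimation of the corresponding tube contributions across the sequence. The complete proof is carried out in \cite[Proposition 4.7]{thesis}.
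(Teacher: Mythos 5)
The paper itself does not reproduce a proof of the converse direction; it simply refers to \cite[Proposition 4.7]{thesis}, so I can only assess your sketch on its own terms. Your reduction is correct and clean: Minkowski's second theorem gives $\vol(\Lambda)\asymp\alpha_1(\Lambda)\alpha_2(\Lambda)$, so cusp-uniformity ($\vol\le C\alpha_1^2$) forces $\alpha_2(\Lambda_{n,j})\asymp\alpha_1(\Lambda_{n,j})$ uniformly, and by Lemma \ref{hypBS} the whole statement reduces to showing $h_n=o(\vol M_n)$.

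The sketch for $h_n=o(\vol M_n)$, however, has two genuine gaps, and while you do flag that the argument is incomplete, the difficulty is more basic than the $h=1$ combinatorics you point to. First, cusp-uniformity controls only the \emph{shape} of the cusp lattices, not their index in $\Gamma_{P_i}$: the sublattice $k\Gamma_{P_i}$ is exactly as uniform as $\Gamma_{P_i}$ but has index $k^2$, so cusp-uniformity alone does not ``confine the cusp sublattices to a finite family.'' Bounded index comes only from bounded average ramification $d_n/h_n$, and only for most double cosets $\Gamma_n\backslash\Gamma/\Gamma_{P_i}$; in particular, for a non-normal $\Gamma_n$ the group $\Gamma_n\cap\Gamma_{P_i}$ itself may have unbounded index, so the pigeonhole as written does not produce a fixed $L_i\subset\Gamma_n$. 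Second, and more seriously, a single fixed loxodromic $\gamma\in\Gamma_n$ does \emph{not} contribute $\gg\vol M_n$ to $\int_{M_n}\mathcal{N}_{\Gamma_n}(x,r)\,dx$. The preimage of the closed geodesic $c$ of $\gamma$ in $M_n$ does have total length $d_n\,\ell(c)$, but $\mathcal{N}_{\Gamma_n}(\cdot,r)$ only sees the lifts of length $\le r$; a lift indexed by the double coset $\Gamma_n g\langle\gamma\rangle$ is short precisely when $g\gamma g^{-1}\in\Gamma_n$, and $\gamma\in\Gamma_n$ guarantees this only for the identity coset. The remaining $d_n-O(1)$ lifts could all be long and then contribute nothing, so the tube argument gives an $O(1)$, not a $\gg\vol M_n$. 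What is actually needed is that a positive proportion of $g\in\Gamma_n\backslash\Gamma$ satisfy $g\gamma g^{-1}\in\Gamma_n$. Both problems vanish when $\Gamma_n$ is normal in $\Gamma$ (then $\gamma\in\Gamma_n$ forces all conjugates into $\Gamma_n$, and $\Gamma_n\cap\Gamma_{P_i}$ is the common cusp lattice over cusp $i$), but the lemma allows arbitrary finite covers, and a correct proof must address these points.
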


%%%%%%%%%%%%%%%%%%%%%%%%%%%%%%%%%%%%%%%%%%%%%%%%%%%%%%%%%%%%%%%%%%%%%%%%%%%%%%%%

\section{Spectral analysis on manifolds with cusps}

\label{spec_dec}
\subsection{Local systems on hyperbolic manifolds}

\subsubsection{Definitions}
\label{systloc}

Let $\Gamma\subset G$ be a lattice and put $M=\Gamma\backslash\HH^3$. The flat real vector bundles (a.k.a. ``real local systems'') on $M$ are obtained as follows: if $\sigma$ is a representation of $\Gamma$ on a finite-dimensional real vector space $V$ we get a vector bundle $F_\sigma$ on $M$ whose total space is the quotient $\Gamma\backslash(\HH^3\times V)$. For $\gamma\in\Gamma$ and a $p$-form $f$ on $\HH^3$ with coefficients in $V$ we denote $\gamma^*f=\sigma(\gamma)^{-1}\circ f \circ\wedge^pT\gamma$. Then the $p$-forms on $M$ with coefficients in $F_\sigma$ correspond to $\Gamma$-equivariant sections of $\wedge^pT\HH^3\rightarrow V$  i.e. to those $f\in\Omega^p(\HH^3;V)$ such that $\gamma^*f=f$ for all $\gamma\in\Gamma$. 

Particularly interesting among all flat bundles are those whose holonomy comes from restricting a representation $\rho$ of $G$ on a real vector space $V$. The representation $\sigma=\rho|_\Gamma$ is never orthogonal but the bundle $F_\sigma$ has an alternative description which yields a natural euclidean product and which we will now describe. Up to scaling there is a unique inner product on $V$ which is preserved by $K$ and such that $\LP$ (the orthogonal for the Killing form of the Lie subalgebra $\frk \subset \frg$ of the group $K$) acts by self-adjoint maps (see \cite{Matsushima_Murakami}). We have a vector bundle $E_\rho$ on $M$ whose total space is $(\Gamma\backslash G\times V)/K$ so that it has a natural metric $|.|$ coming from the $K$-invariant metric on $V$. The square-integrable sections of $E_\rho$ correspond to the subspace:
\begin{equation*}
\{f:\Gamma\backslash G\rightarrow V,\: |f|\in L^2(\Gamma\backslash G),\:\forall g\in G,k\in K,\: f(gk)=\rho(k^{-1})f(g)\}.
v\end{equation*}
More generally, identifying the tangent space of $\HH^3$ at the fixed point of $K$ (which is an irreducible real $K$-representation) with $\LP$, the square-integrable $p$-forms correspond to:
\begin{equation*}
L^2\Omega^p(M;E_\rho)=\left( L^2(\Gamma\backslash G)\otimes V\otimes \wedge^p\LP^*\right)^K 
\end{equation*}
(where we use the habitual notation $H^K$ for the fixed subspace of $K$ in a vector space $H$). We have an isomorphism $E_\rho\to F_\sigma$ induced by the map $G\times V\rightarrow G\times V, \, (g,v)\mapsto (g,\rho(g) \cdot v)$. In the sequel we will denote by $L^2\Omega^p(M;V)$ the space of square-integrable $p$-forms on $M$ with coefficients in $E_\rho$. 

The Hodge Laplacians $\Delta^p[M]$ are essentially self-adjoint operators on the Hilbert spaces $L^2\Omega^p(M;V)$, see \cite{BW} or \cite[Section 3]{BV}.

%%%%%%%%%%%%%%%%%%%%%%%%%%%%%%

\subsubsection{Strong acyclicity}
\label{repr}

The group $G = \SL_2(\CC)$ acts naturally on $\CC^2$. As a real Lie group it also has a representation on $\CC^2$ given by $g \mapsto \ovl g$ (where $\ovl\cdot$ denotes the complex conjugate matrix). We will use the notation $\overline{\CC^2}$ to indicate that we consider this conjugate action. For every pair of nonnegative integers we then have a representation of $G$ on the vector space $V(n_1, n_2)$ defined by:
\[
V(n_1,n_2) = \sym^{n_1}(\CC^2)\otimes\sym^{n_2}\overline{\CC^2}. 
\]
Standard representation theory tells us that these are all the irreducible finite-dimensional representations of $G$. 

The most important (for us) feature of the representations $V(n_1, n_2)$ is the following spectral gap property, which is proven in \cite[Lemma 4.1]{BV} and also follows from \cite[Proposition 6.12 in Chapter II]{BW}; Bergeron and Venkatesh term this ``strong acyclicity'' of the representation. 

\begin{prop}
Let $n_1\not=n_2$ and $V=V(n_1, n_2)$. There exists $\lambda_0>0$ such that for any lattice $\Gamma$ in $G$, $M=\Gamma\backslash\mathbb{H}^3$, $p=0,1,2,3$ and $\phi\in L^2\Omega^p(M;V)$ we have  
\begin{equation*}
\langle\Delta^p[M]\phi,\phi\rangle_{L^2\Omega^p(M;V)}\ge \lambda_0\|\phi\|_{ L^2\Omega^p(M;V)}.
\end{equation*}
\label{strongacyclicity}
\end{prop}

%%%%%%%%%%%%%%%%%%%%%%%%%%%%%%

\subsubsection{Unitary representations}
Let $\sigma_m, y$ be defined by:
\begin{equation} \label{def_char}
\sigma_m\begin{pmatrix} e^{i\theta} & 0 \\ 0 & e^{-i\theta} \end{pmatrix} = e^{mi\theta}, \quad y\begin{pmatrix} t & 0 \\ 0 & t^{-1} \end{pmatrix} = t^2. 
\end{equation}
For $s\in \CC$ and $m \in \ZZ$ we denote by $\pi(s, m)$ the representation of $G$ induced by the character $\sigma_m \otimes y^{1 + \frac s 2}$ of $P_\infty = M_\infty A_\infty N_\infty$. This is the representation $\mathscr P^{m, s}$ defined in \cite[(2.11)]{Knapp}; it is unitary if and only if $s \in i\RR$.

\subsubsection{Laplace eigenvalues and differentials}
Let $e_1, e_{-1}$ be the canonical basis of $\CC^2$. For $l = -n, -n + 2, \ldots, n$ put $e_l = e_1^{\frac{n + l} 2} e_{-1}^{\frac{n - l} 2} \in V(n,0)$ and for $l = -n_1, \ldots, n_1, \, k = -n_2, \ldots, n_2$ put $e_{l,k} = e_l \otimes \ovl e_{-k} \in V(n_1, n_2)$. Thus we have 
\[
\rho(ma) \cdot e_{l,k} = \sigma_{l+k}(m) y(a)^{\frac{l-k}2} e_{l,k},\: m\in M_\infty,a\in A_\infty. 
\]
Define $V_{l,k} = \CC e_{l,k},\, V_m = \sum_{l+k=m} V_{l,k}$. Let $P = g_0 P_\infty g_0^{-1}$ be a parabolic subgroup, $y_P$ a height function at $P$, $s \in \CC, v \in V$ and define a section of $G\times V_\CC$ by the formula:
\[
\phi_{s,v}(g) = y_P(g)^{1 + \frac s 2} \rho(k^{-1}) \cdot v, \: g=nak. 
\]
If $v \in g_0 V_m$ then $\phi_{s,v}$ belongs to the space of $\pi(s, m)$. Its $K$-type is contained in $V_\CC$, and thus it yields a section of $E_\rho$ over $\HH^3$. 

By computation of the Casimir eigenvalues in the induced representation (see \cite[5.7]{BV} who cite \cite[Proposition 8.22 and Lemma 12.28]{Knapp}) the functions $\phi_{s,v},\, v\in g_0 V_m$, give rise to sections of $E_\rho$ which are eigenvectors of $\Delta^0[\HH^3]$ with eigenvalue 
\[
|s|^2 - m^2 + (n_1 + n_2 + 2)^2 + (n_1 - n_2)^2.
\] 
Note that this bounded away from zero for all $n_1,n_2,m$ and $s\in i\RR$ (since $m \in [-n_1 - n_2, n_1+n_2]$).

\medskip

Now let $W = V \otimes V(2,0)$. The $G$-equivariant bundle associated to $W_\CC$ is isomorphic to the bundle of $1$-forms with coefficients in $V_\CC$. Using the same construction as above we get an eigenform with coefficients in $E_\rho$ and eigenvalue 
\begin{equation} \label{Caseig}
-s^2 - (m + \eps)^2 + (n_1 + n_2 + 2)^2 + (n_1 - n_2)^2
\end{equation}
where $\eps=0,\pm2$ according to whether $v\in g_0 V_m \otimes V_\eps$; the eigenvalue is larger than $(n_1-n_2)^2$ for $s \in i\RR$, in particular bounded away from 0 when $n_1 \not= n_2$.  

Now let us compute the differentials for sections and $1$-forms. In both cases this has to be done in the $G$-equivariant model for $E_\rho$. Let $v \in V_{l,k}$, then the $G$-equivariant section corresponding to $\phi_{s,v}$ is $g\mapsto y_P(g)^{1 + \frac{s + l - k} 2} \rho(n) \cdot v$, $g=nak$ and thus: 
\begin{equation*}
d\phi_{s,v}(g) = \frac 1 2 (s + l - k + 2) y_P(g)^{\frac{s + l -k} 2}(\rho(n) \cdot v) \otimes dy_P + \ldots 
\end{equation*}
where $\ldots$ indicates terms which are orthogonal to $dy_P$. If $v \in V_{l,k} \otimes V_{-2,0}$ then the corresponding $G$-equivariant 1-form on $\HH^3$ is given by $y_P(g)^{\frac{s+l-k}2} (\rho(n) \cdot w)\otimes dz$ and we have:
\begin{equation}
d\phi_{s,v\otimes e_2} = \frac 1 2 (s + l - k) y_P(g)^{s+l-k-1} (\rho(n) \cdot v) \otimes dy_P \wedge dz + \ldots 
\label{calcdiffind1}
\end{equation}
where the $\ldots$ indicate terms in $dz\wedge d\ovl z$, and a similar computation holds for forms in $d\ovl z$. The forms in $dy_P$ are closed.

%%%%%%%%%%%%%%%%%%%%%%%%%%%%%%%%%%%%%%%%%%%%%%%%%%%%%%%%%%%%

\subsection{Spectral decomposition}
\label{Eisenstein}

From now on we fix a $G$-representation $\rho$ on a vector space $V$. It is a well-known fact that one has the orthogonal sum 
\begin{equation} \label{spectral_decomposition}
L^2\Omega^p(M; V) = L_\disc^2\Omega^p(M; V) \oplus L_\cont^2\Omega^p(M; V)
\end{equation}
where $\Delta^p[M]$ has only discrete spectrum in $L_\disc^2\Omega^p(M;V)$ and completely continuous spectrum in $L_\cont^2\Omega^p(M;V)$. Here we briefly describe the proof of this result through the theory of Eisenstein series developed by Selberg, Langlands and others which actually yields a complete description of the continuous part. 

%%%%%%%%%%%%%%%%%%%%%%%%%%%%%%

\subsubsection{Constant terms and cusp forms}

Let $P$ be any $\Gamma$-rational parabolic and $f\in L^2\Omega^p(\HH^3;V)$ a $\Gamma$-equivariant $p$-form. Its constant term at $P$ is defined to be the $p$-form given by
\begin{equation}
f_P(v)= \int_{\Gamma_P\backslash N} n^*f(v)\frac{dn}{\vol(\Gamma_P\backslash N)}.
\label{deftc}
\end{equation}
This descends a $p$-form on $\Gamma_P\backslash\HH^3$ (which depends only on the $\Gamma$-conjugacy class of $P$) which is actually $N$-equivariant. If $h :\Gamma\backslash G \to V\otimes\wedge^p\LP^*$ is the $K$-equivariant function corresponding to $f$ (see \ref{systloc}) then the one corresponding to $f_P$ is given by $g \mapsto 1/2 \int_{\Gamma_P\backslash N}h(ng)dn$. A $p$-form $f$ is said to be cuspidal when $f_P=0$ for all $\Gamma$-rational parabolics, and we denote by $L_\cusp^2\Omega^p(M;V)$ the space of all such forms. Theorem \ref{Selberg} below implies that we have $L_\cusp^2\subset L_\disc^2$.

%%%%%%%%%%%%%%%%%%%%%%%%%%%%%%

\subsubsection{Eisenstein series}

If $P$ is a $\Gamma$-rational parabolic there is a map $E_P^p$ from the subspace of $N$-equivariant forms in $L^2\Omega^p(\HH^3;V)$ to $L^2\Omega^p(M;V)$ given by 
\begin{equation}
E_P^p(f) = \sum_{\gamma\in\Gamma/\Gamma_P}\gamma^*f.
\label{te}
\end{equation}
If $P,P'$ are two equivalent $\Gamma$-rational parabolics then the obvious map $\theta:L^2(N\backslash \HH^3;V)\to L^2(N'\backslash \HH^3;V)$ intertwines $E_P^p$ and $E_{P'}^p$, i.e. $E_P^p=E_{P'}^p\circ\theta$. We choose representatives $P_1,\ldots,P_h$ of the conjugacy classes of $\Gamma$-rational parabolics and put $E^p=\bigoplus_{j=1}^h E_{P_j}^p$. Then we have the following facts:
\begin{itemize}
\item[\textbullet] $\im(E^p)=L_\cusp^2\Omega^p(M;V)^\bot$; 
\item[\textbullet] there is a finite-dimensional subspace $L_\res^2$ inside 
$\im(E)$ such that we have the orthogonal sum $\im(E)=L_\cont^2\oplus L_\res^2$. 
\end{itemize}
When $V$ is strongly acyclic the subspace $L_\res^2$ is actually zero for all $p$; when $V$ is trivial it is of dimension one for $p=0$ or $p=3$ and zero for $p=1,2$. We will now describe how the map allows to describe the continous part $L_\cont^2\Omega^p(M;V)$: we begin by a general exposition and then specialise to sections and 1-forms with coefficients in a bundle $E_\rho$. 

%%%%%%%%%%%%%%%%%%%%%%%%%%%%%%

\subsubsection{About references}

Our main reference for this subsection is G. Warner's disquisition \cite{Warner}; the theory we expose here is developed there in greater generality (for all real-rank-one locally symmetric spaces) with more details (though the author frequently refers to \cite{HC} for complete proofs). The exposition in this reference is not particularly user-friendly; for a more accessible one (only in the case of Fuchsian groups and functions, but all the main ideas are already present) we refer to H. Iwaniec's textbook \cite{Iwaniec}. The case of arithmetic 3-manifolds is also treated in detail in \cite[Chapitre 5]{thesis}; the book \cite{EGM} contains a complete treatment of functions on more general hyperbolic manifolds.

%%%%%%%%%%%%%%%%%%%%%%%%%%%%%%

\subsubsection{Eisenstein series with coefficients in a $K$-equivariant bundle}
\label{eis_gen}
Let $\tau$ be a finite-dimensional representation of $K$ on a complex vector space $W$, with highest weight $q\in\NN$. The space $W$ decomposes as the orthogonal sum 
\begin{equation*}
W = \begin{cases} 
       \bigoplus_{k=-q}^q W_{2k}  & q=\frac{n_1+n_2}2, \:n_1-n_2 \text{ even}; \\
       \bigoplus_{k=-q}^q W_{2k+1}  & q=\frac{n_1+n_2-1}2, \:n_1-n_2 \text{ odd}    \end{cases}
\end{equation*}
where $W_l$ is the subspace on which $M$ acts by the character $\sigma_l$ defined in \eqref{def_char}. Let $E_\tau$ be the bundle on $\Gamma_P\bs\HH^3$ whose total space is given by $(W\times\Gamma_P\bs G)/K$; then the smooth sections of $E_\tau$ are identified with the space $\left(W\otimes C^\infty(\Gamma_P\bs G)\right)^K$. For $s \in \CC$ we identify the subset of such sections which are $N$-invariant on the right, proportional to $y_P^{1 + s}$ and in the image of $\left(W_l\otimes C^\infty(\Gamma_P\bs G)\right)^K$ with $W_l$: we denote this identification by $w\mapsto w_s$. Then for $w\in W_l$ the Eisenstein series $E_P(w_s)$ corresponds to the Eisenstein series denoted by $E(P:w:s/2:\cdot)$ in \cite{Warner}\footnote{Note that our parameter $s$ differs from that used in this reference by a factor of 2, but this does not affect any of the results we quote from there.}, and hence we have the following properties for it from loc. cit.:
\begin{itemize}
\item The series is convergent for $\Real(s)>0$, and admits a meromorphic extension to $\CC$ with no poles on the imaginary axis (\cite[page 9]{Warner}).

\item The constant terms of $E_{P_j}(w_s)$ are given by 
\begin{equation} \label{tc_general}
\delta_{i,j} y_{P_i}^{1 + \frac s 2} w + y_{P_i}^{1 - \frac s 2} \Phi_{i,j;l}(s)w
\end{equation}
where $\Phi_{i,j;l}$ is a meromorphic function with values in $\hom_\CC(W_l,W_{-l})$ (\cite[pages 7,13]{Warner} where $\Phi_{i,j;l}\oplus\Phi_{i,j;-l}$ corresponds to $c_{P_i|P_j}(w,s)$). 

\item Put $\Phi_l(s)=\bigoplus_{i,j}\Phi_{i,j;l}(s) \in \hom \left((W_l)^h,(W_{-l})^h \right)$. Then we have the functional equations $\Phi_l(-s)\Phi_l(s) = \id$, and $\Phi_l(iu)^* = \Phi_l(iu)^{-1}$ for $u \in \RR$ (\cite[page 8]{Warner}). 

\item The continuous part of $L^2(M;E_\tau)$ is spanned by the functions $\int_{-\infty}^{+\infty} \psi(u)E(w_{iu})du$ for $\psi\in L^2(\RR)$ and $w\in W^h$ (where $E(w_s) = \sum_{j=1}^h E((w_j)_s)$): \cite[page 32]{Warner}
\end{itemize}

For $Y = (Y_1, \ldots, Y_h) \in [1, +\infty[$ one defines the truncation operator at height $Y$ by:
\begin{equation*}
T^Y f(g) = f(g) - \sum_{j=1}^h 1_{[Y_j,+\infty)}(y_j(g)) f_{P_j}(g), \quad f\in C^\infty(M;E_\tau).
\end{equation*}
For $w\in (W_l)^h$ we have the `Maass--Selberg relations':
\begin{equation}
\begin{split}
\|T^Y E(w_s)\|_{L^2(M;E_\tau)}^2 &= 2\sum_{j=1}^h \log(Y_j)|w_j|_W^2 + \langle\Phi_l(s)^{-1}\Phi_l(s)'(w),w\rangle_{W^h} \\
                                 &\quad  +\sum_{j=1}^h \frac 1 s \bigl(Y_j^s\langle(\Phi_l(-s)w)_j,w_j\rangle_W - Y_j^{-s}\langle(\Phi_l(s)w)_j,w_j\rangle_W\bigr)
\end{split}
\label{MSG}
\end{equation} 
see \cite[page 83]{Warner}.

%%%%%%%%%%%%%%%%%%%%%%%%%%%%%%

\subsubsection{Sections}
Let $v\in V^h$; we denote by $E(s,v)$ the section of the bundle $E_\rho$ over $\Gamma\bs\HH^3$ corresponding to $E(v_s)$ in the notation above. For $l=-q,\ldots,q$ we let $\Psi_l(s)=\bigoplus_{i} \Phi_{j,i:l}(s) \in\hom(V_l^h,V_l)$ so that the constant terms of $E(s,v)$ are given by 
\begin{equation*}
E(s,v)_{P_j} = y_j^{1 + s} v_j + y_j^{1 - s}(\Psi_l(s)v)_j
\end{equation*}
for $v\in V_l$. For $v\in (V_l)^h$ the sections $E(s,v)$ are eigenfunctions of the laplacian $\Delta^p[M]$ with eigenvalue $-s^2 - l^2 + \lambda_V$ where $\lambda_V$ is the Casimir eigenvalue of $V$, $\lambda_V = (n_1+n_2+2)^2 + (n_1-n_2)^2$ if $V = V(n_1, n_2)$ by \eqref{Caseig}. 

For $v\in V_l$ the Maass--Selberg relations \eqref{MSG} are written:
\begin{equation}
\begin{split}
\|T^Y E(s,v)\|_{L^2(M;V)}^2 &= 2\sum_{j=1}^h \log(Y_j)|v_j|_V^2 + \langle\Psi_l(s)^{-1}\Psi_l(s)'(v),v\rangle_{V^h} \\
                                 &\quad  +\sum_{j=1}^h \frac 1 s \bigl(Y_j^s\langle(\Psi_l(-s)v)_j,v_j\rangle_V - Y_j^{-s}\langle(\Psi_l(s)v)_j,v_j\rangle_V\bigr)
\end{split}
\label{MS0}
\end{equation}

%%%%%%%%%%%%%%%%%%%%%%%%%%%%%%

\subsubsection{1-forms}

We denote by $\Omega_j^+(V)$ (resp. $\Omega_j^-(V)$) the space of 1-forms on $\Gamma_{P_j}\bs N_j$ with coefficients in the restriction of $E_\tau$ which are of the form $dz_j\otimes v$ (resp. $d\ovl z\otimes v$) for $v\in V$, and by $\Omega_j^+(V_l)$ the supspace of those for which $v\in g_jV_l$ where $g_j$ conjugates $P_j$ to the parabolic at infinity $P_\infty$ (and define $\Omega_j^-(V_l)$ similarly). We put $\Omega^\pm(V_l)=\bigoplus_j \Omega_j^\pm(V_l), \, \Omega^\pm(V)=\bigoplus_j \Omega_j^\pm(V)$. 

On the other hand, the 1-forms in coefficients in $E_\rho$ on $\Gamma_P\bs\HH^3$ correspond to the sections of the bundle $E_\tau$ where $\tau=\rho|_K\otimes\ad|_K$ (where $\ad$ is the adjoint representation of $G$, which is isomosphic to $V(2,0)$). The representation $\tau$ has two summands: one isomorphic to $\rho|_K$ which corresponds (in the correspondance set in \ref{eis_gen}) to the differential of sections, and its orthogonal which corresponds to co-closed 1-forms, whose constant terms are of the form $\omega+\ovl\omega$ for $\omega\in\Omega^+(V),\ovl\omega\in\Omega^-(V)$: we denote the latter by $W$, and by $W_l$ the subspace $\Omega^+(V_{l-2})\oplus\Omega^-(V_{l+2})$. Then for $\omega\in W_l$ the 1-form $E(s,\omega)$ corresponding to $E(\omega_s)$ is an eigenform of the laplacian with eigenvalue $-s^2-(l\mp 2)^2+\lambda_V$ (again by \eqref{Caseig}). The constant terms of $E(s,\omega)$ are given more precisely by
\[
E(s,\omega)_{P_j} = y^{1 + s} \omega_j + y^{1 - s} \left(\Phi_l(s)\omega\right)_j. 
\]

The Maass-Selberg relations are given by:
\begin{equation}
\|T^Y E(s,\omega)\|_{L^2\Omega^1(M;V)}^2 = 2\sum_{j=1}^h \log(Y_j)|\omega_j|_{\Omega^\pm(V)}^2+\langle\Phi^\pm(s)^{-1}\Phi^\pm(s)'(\omega),\omega\rangle_{\Omega^\pm(V)}.
\label{MS1}
\end{equation}

%%%%%%%%%%%%%%%%%%%%%%%%%%%%%%

\subsubsection{2- and 3-forms}
The Hodge $*$ yields isometries $L^2\Omega^p(M;V)\rightarrow L^2\Omega^{3-p}(M;V)$, so that the spectral decomposition for $L^2\Omega^2,L^2\Omega^3$ spaces follows from that of $L^2\Omega^1$ and $L^2$ respectively.

%%%%%%%%%%%%%%%%%%%%%%%%%%%%%%%%%%%%%%%%%%%%%%%%%%%%%%%%%%%%%%%%%%%%%%%%%%%%%%%%

\section{Selberg's trace formula and regularised traces}

\label{trace}
\subsection{Automorphic kernels}

As noted in \ref{systloc} the Laplacians $\Delta^p[\HH^3]$ on $\HH^3$ with coefficients in a flat bundle are essentially self-adjoint operators and the spectral theorem thus allows, for a function $\phi\in C^{\infty}([0,+\infty))$, to define an operator $\phi(\Delta^p[\HH^3])$ on $L^2\Omega^p(\HH^3;V)$. Moreover, if $\phi$ is sufficiently decreasing at infinity this operator is given by convolution with a kernel 
\[
k_{\phi,p}\in C^{\infty}(\HH^3\times\HH^3;(\wedge^pT\HH^3\otimes V)\otimes(\wedge^pT\HH^3\otimes V)^*),
\]
i.e. $k_{\phi,p}(x,y)\in\hom(\wedge^pT_x^*\HH^3\otimes V,\wedge^pT_y^*\HH^3\otimes V)$ and for a $p$-form $f\in L^2\Omega^p(\HH^3;V)$ one has
\[
\phi(\Delta^p[\HH^3])f(y)=\int_{\HH^3}k_{\phi,p}(x,y)f(x)dx.
\]
The kernels $k_{\phi,p}$ are invariant under the diagonal action of $G$ on $\HH^3\times\HH^3$, meaning that for $g\in G,\, x,y\in\HH^3$ we have 
\begin{equation}\label{Geq}
k_{\phi,p}(x,y)=(\wedge^p T_y g^{-1}\otimes\mathrm{Id}_V) \circ k_{\phi,p}(gx,gy)\circ (\wedge^p T_x g\otimes\mathrm{Id}_V).
\end{equation}
The Plancherel formula for $G$ allows to compute the $k_{\phi,p}$ and with a lot more work one can obtain the following lemma (essentially due to F. Sauvageot), an explanation of which can be found in \cite[Proposition 6.4]{7S} (by density of a subset $S$ we mean that any Radon measure is determined by its restriction to $S$). 

\begin{lem} \label{Plancherel}
The space $\mathcal{A}(\RR)$ of smooth functions $\phi$ on $\RR$ such that for any $\phi\in\mathcal{A}(\RR)$ we have $k_{\phi,p}(x,y)\ll e^{-Ad(x,y)}$ for all $A>0$ is dense in the space $\mathcal{S}(\RR)$ of Schwartz functions.
\end{lem}

From now on we will always suppose that $\phi\in\mathcal{A}(\RR)$. For $g\in G$ we put: 
\[
g^*k_{\phi,p}(x,y)=(\wedge^p T_y g^{-1}\otimes\rho(g)^{-1})\circ k_{\phi,p}(x,y) \in \hom(\wedge^pT_x^*\HH^3\otimes V,\wedge^pT_{g^{-1}y}^*\HH^3\otimes V). 
\]
By the above Lemma we have $|g^*k_{\phi,p}(x,y)|\ll e^{-Ad(x,y)}$ so that it follows from the well-known estimate
\[
|\{ \gamma\in\Gamma, d(x,\gamma y)\le r\}| \le Ce^{cr}
\]
(where $c$ is absolute and $C$ depends on $\Gamma,x$---see also Lemma \ref{gen_count}) that the following series converges uniformly on compact sets of $\HH^3\times\HH^3$: 
\[
K_{\phi,p}^\Gamma(x,y)=\sum_{\gamma\in\Gamma}\gamma^*k_{\phi,p}(x,\gamma y). 
\]
The kernel $K_{\phi,p}^\Gamma$ is $\Gamma$-equivariant in each variable and hence can be seen as a section of $(\wedge^pTM\otimes V)\otimes(\wedge^pTM\otimes V)^*$. On the other hand, since the operator $\Delta^p[M]$ (the Laplacian on $p$-forms on $M$ with coefficients in $E_\rho$) is essentially self-adjoint we can define the operator $\phi(\Delta^p[M])$ on $L^2\Omega^p(M;V)$. Then $K_{\phi,p}^\Gamma$ is a kernel for $\phi(\Delta^p[M])$, in other words for $f\in L^2\Omega^p(M;V)$ we have:
\begin{equation}
\phi(\Delta^p[M])f(y)=\int_{\Gamma\backslash\HH^3} K_{\phi,p}^\Gamma(x,y)f(x) dx.
\label{autodev}
\end{equation}

%%%%%%%%%%%%%%%%%%%%%%%%%%%%%%

\subsubsection{Truncation}

In the sequel we will write $Kf$ for the convolution of a section $f$ with a kernel $K$. Let $P$ be a parabolic subgroup of $G$, we define the constant term at $P$ of $k_{\phi,p}$ to be the kernel given by
\[
(k_{\phi,p})_P(x,y) = \int_N n^* k_{\phi,p}(x,ny)dn.
\]
For a $\Gamma$-rational parabolic subgroup $P$ we define the constant term $(K_{\phi,p}^\Gamma)_P$ of $K_{\phi,p}^\Gamma$ at $P$ by 
\[
(K_{\phi,p}^\Gamma)_P(x,y) = 
          \frac 1{\vol(\Gamma_P\backslash N)} \sum_{\gamma\in\Gamma/\Gamma_P} \gamma^*(k_{\phi,p})_{\gamma P\gamma^{-1}}(x,\gamma y) 
        = \frac 1{\vol(\Gamma_P\backslash N)} \sum_{\gamma\in\Gamma/\Gamma_P} \int_N (\gamma n)^* k_{\phi,p}(x,\gamma ny)dn.
\]
For $f\in L^2\Omega^p(M;V)$ a routine calculation yields
\begin{equation} \label{Pterm}
(K_{\phi,p}^\Gamma)_P(f)=(K_{\phi,p}^\Gamma)(f_P)
\end{equation}
Recall that the truncated manifold $M^Y$ was defined in \eqref{tron}. One naturally defines the truncated kernel on $M$ by:
\[
T^Y K_{\phi,p}^\Gamma=
\begin{cases}K_{\phi,p}^\Gamma(x,y) - (K_{\phi,p}^\Gamma)_{P_j}(x,y) & y_j(y)\ge Y_j;\\
             K_{\phi,p}^\Gamma(x,y)                            & y\in M^Y
  \end{cases}
\]
and it follows from \eqref{Pterm} that
\begin{equation} \label{KYF=KFY}
T^Y K_{\phi,p}^\Gamma(f)=K_{\phi,p}^\Gamma(T^Y f).
\end{equation}

%%%%%%%%%%%%%%%%%%%%%%%%%%%%%%%%%%%%%%%%%%%%%%%%%%%%%%%%%%%%

\subsection{Geometric side}

Let $h_{\phi,p}$ be the function on $[0,+\infty[$ defined by 
\begin{equation}
h_{\phi,p}(\ell) = \tr\left(n^*k_{\phi,p}(x,nx)\right) 
\label{defh}
\end{equation}
for any unipotent isometry $n\in G$ such that $\ell = d(x,nx)$. This definition is legitimate, i.e. the right-hand side depends only on $\ell$: indeed, if $n,n'$ are two unipotent elements of $G$ and $x,x'$ two points in $\HH^3$ such that $d(x,nx) = d(x',n'x')$ there exists a $g\in G$ such that $gx = x'$ and $gng^{-1} = n'$ (this follows immediately from Lemma \ref{eval} and the fact that the stabiliser of an horosphere in $\HH^3$ acts transitively on euclidean spheres---note that this is no longer true in symmetric spaces other than the real hyperbolic spaces) and hence 
\[
\tr(n^*k_{\phi,p}(x,nx)) = \tr((gng^{-1})^* k_{\phi,p}(gx,gnx)) = \tr((n')^* k_{\phi,p}(x',n'x')). 
\]
Let $\Gamma$ be a torsion-free lattice in $G$ and let $h$ be the number of cusps of the manifold $M=\Gamma\bs\HH^3$, which we suppose endowed with an arbitrary height function $y=\max_j y_j$. Let $\Lambda_1,\ldots,\Lambda_h$ be the Euclidean lattices associated to the cusps of $M$ and $y$; we associate to them the following quantity:
\[
\kappa_j = 2\int_{\alpha_1(\Lambda_j)}^{+\infty} E_{\Lambda_j}(\rho)\frac{d\rho}{\rho^3} + \frac{\pi(1 - 2\log\alpha_1(\Lambda_j))}{\vol(\Lambda_j)}; 
\]
note that only the second summand depends on the choice of $y$. We also define 
\[
\otr_\Gamma (k_{\phi,p}) = \vol(M)\cdot\tr \left( k_{\phi,p}(x_0,x_0) \right)
\]
where $x_0$ is any point of $\HH^3$. For the statement and proof of the following proposition we will suppose that $\Gamma\cap P\subset N$ for all parabolic subgroups $P$ of $G$ with unipotent radical $N$ (we remark that a modified version of the proposition is true in all generality, see \cite[3.5]{thesis}). 

\begin{prop}
Let $\phi\in{\mathcal A}(\RR)$, $p=0,\ldots,3$ and let $K_{\phi,p}^\Gamma$ be the associated automorphic kernel on $M$. Then for any $Y \in [1,+\infty[^h$ the integral
\[
\int_M T^Y K_{\phi,p}^\Gamma(x,x) dx
\]
is absolutely convergent, and as $\min_j(Y_j)$ tends to infinity we have the following asymptotic expansion:
\begin{align*}
\otr\left( T^YK_{\phi, p}^\Gamma\right) &= \left(2\pi h \int_0^{+\infty} r h_{\phi,p}(\ell(r)) dr \right) \sum_{j=1}^h \log Y_j \\
                                &\quad + \otr_\Gamma k_{\phi, p} + \int_M \sum_{\gamma\in\Gamma_\lox} \tr(\gamma^*k_{\phi, p}(x,\gamma x)) dx \\
                  &\quad +2\pi h \int_0^{+\infty} r \log(r) h_{\phi,p}(\ell(r))dr  + \sum_{j=1}^h \kappa_j \vol(\Lambda_j) \int_0^{+\infty} r h_{\phi,p}(\ell(r))dr + o(1).
\end{align*}
\label{geom_side}
\end{prop}

\begin{proof}
To make things more readable we will deal only with the case where $M$ has only one cusp (only notational alterations are necessary to deal with the general case). We fix a $\Gamma$-rational parabolic $P$ with unipotent radical $N$ and denote by $\Lambda$ the Euclidean lattice associated to $\Gamma_P=\Gamma\cap N$. We let $D$ denote a fundamental domain for $\Gamma$ in $\HH^3$ and $D^Y\subset D$ the preimage of $M^Y$: we suppose that the only ideal vertex of $D$ is the fixed point of $P$, so that for $Y$ large enough $D-D^Y$ is contained in the horoball of height $Y$ at $P$. By the definition of the function $h_{\phi,p}$ and Lemma \ref{eval} we have
\begin{align*}
&\int_{M-M^Y} |\tr T^YK_{\phi,p}^\Gamma(x,x)| dx = \int_{D-D^Y} |\tr(K_{\phi,p}^\Gamma-(K_{\phi,p}^\Gamma)_P)(x,x)| dx \\
      &\qquad\qquad \le  \int_{D-D^Y} \left| \sum_{\eta\in\Lambda-\{0\}} h_{\phi,p} \left(\ell\left(\frac{|\eta|}{y_P(x)} \right)\right) - \frac 1{\vol(\Lambda)} \int_N h_{\phi,p} \left(\ell\left(\frac{|n|}{y_P(x)} \right)\right) dn \right| dx \\
      &\qquad\qquad\quad + \int_{D-D^Y}  \sum_{\substack{\gamma\in\Gamma/\Gamma_P \\ \gamma \not= \Gamma_P}}\frac 1{\vol(\Lambda)} \int_N |k_{\phi,p}(x,\gamma nx)| dn  dx \\
      &\qquad\qquad\quad + \int_{D-D^Y} \sum_{\gamma\in\Gamma-\Gamma_P} |k_{\phi,p}(x,\gamma x)| dx  \\
      &\qquad\qquad\quad + \vol(D-D^Y) |\tr k_{\phi,p}(x_0,x_0)| \\
      &\qquad\qquad =:  I_1 + I_2 + I_3 + I_4. 
\end{align*}
Now we prove that the $I_k$ for $k=1,\ldots,4$ are finite and go to 0 as $Y\to+\infty$. The term $I_4$ is trivial to deal with. Let us deal with $I_3$: for any $a>0$, since $\phi\in\mathcal{A}(\RR)$ we have $|k_{\phi,p}(x,y)|\le e^{-ad(x,y)}$, and it follows that:
\begin{align*}
I_3 &\le \int_{D-D^Y} \sum_{\gamma\in\Gamma-\Gamma_P} e^{-ad(x,\gamma x)} dx \\
    &  = \int_{D-D^Y} \int_0^{+\infty} e^{-ar}dN_x(r) dx
\intertext{where $N_x(r)=|\{\gamma\in\Gamma-\Gamma_P:\: d(x,\gamma x)\le r\}|$; integrating by parts we get:}
I_3 & \le \int_{D-D^Y} \int_0^{+\infty} ae^{-ar}N_x(r) drdx \ll_{M,a} \vol(D-D^Y) \int_0^{+\infty} e^{(c-a)r}dr
\end{align*}
where the last estimate follows from Lemma \ref{gen_count}. Taking $a>c$ it follows that $I_3$ goes to 0 as $Y\to+\infty$. 

Now we deal with $I_2$; the main point is that for any (large enough) $a$ we have an estimate 
\begin{equation} \label{est_int_horo}
\int_H |k_{\phi,p}(x,y)| dy \le C_a e^{-ad(x,H)}
\end{equation}
for any horosphere $H$ such that $x$ doe not lie in the horoball that it bounds. Let us prove this: let $y_0$ be the projection of $x$ on $H$; we have  
\[
d(x,y) \ge \frac 1 2 (d(x,H) + d(y_0,y))
\]
for all $y\in H$ (indeed, since $d(y,x)\ge d(y_0,x)=d(x,H)$ it holds trivially if either $d(y,y_0)\ge d(x,H)=d(x,y_0)$ or $d(y,y_0)\le d(x,H)$) and since we supposed that $\phi \in \mathcal A(\RR)$ (see Lemma \ref{Plancherel} and the remark afterwards) for $A = 2a$ we get that 
\[
|k_{\phi,p}(x,y)| \ll_a e^{-ad(x,H)}e^{-ad(y,y_0)}
\]
for all $a>0$ and $y\in H$; since the integral $\int_H e^{-ad(y,y_0)}dy$ converges for $a$ large enough we obtain \eqref{est_int_horo}. It follows that
\begin{equation} \label{hororeste}
I_2 \le C_a \int_{D-D^Y} \sum_{\substack{\gamma\in\Gamma/\Gamma_P \\ \gamma \not= \Gamma_P}} e^{-a d(x,\gamma H)} dx
\end{equation}
and by Lemma \ref{counthoro2} the inner sum is finite and uniformly bounded for $x\in D-D^1$, hence $I_2$ is finite and goes to 0 as $Y\to+\infty$. 

Finally we deal with the first term, which is more subtle. The integrand is $N$-invariant and hence it equals 
\[
I_1 = \int_Y^{+\infty} \left| \sum_{\eta\in\Lambda-\{0\}} h_{\phi,p} \left(\ell\left(\frac{|\eta|}y \right)\right) - \frac 1{\vol(\Lambda)} \int_N h_{\phi,p} \left(\ell\left(\frac{|n|}y \right)\right) dn \right| \frac{dy}{y^3}. 
\]
Recall that $\mathcal{N}_\Lambda$ is the counting function for the Euclidean lattice $\Lambda$, $\mathcal{N}_\Lambda^*=\mathcal{N}_\Lambda-1$ and $E_\Lambda(r)=\mathcal{N}_\Lambda^*-\frac{\pi r^2}{\vol(\Lambda)}\ll\frac r{\alpha_1(\Lambda)}$. Now we compute:
\begin{align*}
& \int_Y^{+\infty} \left| \sum_{\eta\in\Lambda-\{0\}} h_{\phi,p} \left(\ell\left(\frac{|\eta|}y \right)\right) - \frac 1{\vol(\Lambda)} \int_N h_{\phi,p} \left(\ell\left(\frac{|n|}y \right)\right) dn \right| \frac{dy}{y^3}\\
&\phantom{\int_Y^{+\infty} (\sum_{\eta\in\Lambda_j-\{0\}} h_{\phi,p}(\ell(|\eta|/y)))} 
    = \int_Y^{+\infty} \left| \int_0^{+\infty} h_{\phi,p} \left(\ell \left(\frac r y \right)\right) \left( d\mathcal{N}_{\Lambda}^*(r) - \frac{2\pi r}{\vol(\Lambda)} dr\right) \right| \frac{dy}{y^3} \\
&\phantom{\int_Y^{+\infty} (\sum_{\eta\in\Lambda_j-\{0\}} h_{\phi,p}(\ell(|\eta|/y)))} 
    = \int_Y^{+\infty} \left|\int_0^{+\infty} \frac{dh_{\phi,p}(\ell(r/y))}{dr} \left(\mathcal{N}_{\Lambda}^*(r)-\frac{\pi r^2}{\vol(\Lambda)}\right) dr \right|\frac{dy}{y^3} \\
&\phantom{\int_Y^{+\infty} (\sum_{\eta\in\Lambda_j-\{0\}} h_{\phi,p}(\ell(|\eta|/y)))} 
    \le \int_Y^{+\infty} \int_0^{+\infty} \left| \frac{dh_{\phi,p}(\ell(r))}{dr} \right| E_{\Lambda}(ry) dr \frac{dy}{y^3}.
\end{align*}
We have $E_{\Lambda}(ry) \ll (r+1)y$ as $y\to\infty$, uniformly in $r$ (see Lemma \ref{ucount}) and it follows that the right-hand side (hence $I_1$) is finite and goes to 0 as $Y\to+\infty$. 

\begin{figure} 
\centering
\includegraphics[trim=0cm 3cm 0cm 3cm, width=.7\textwidth]{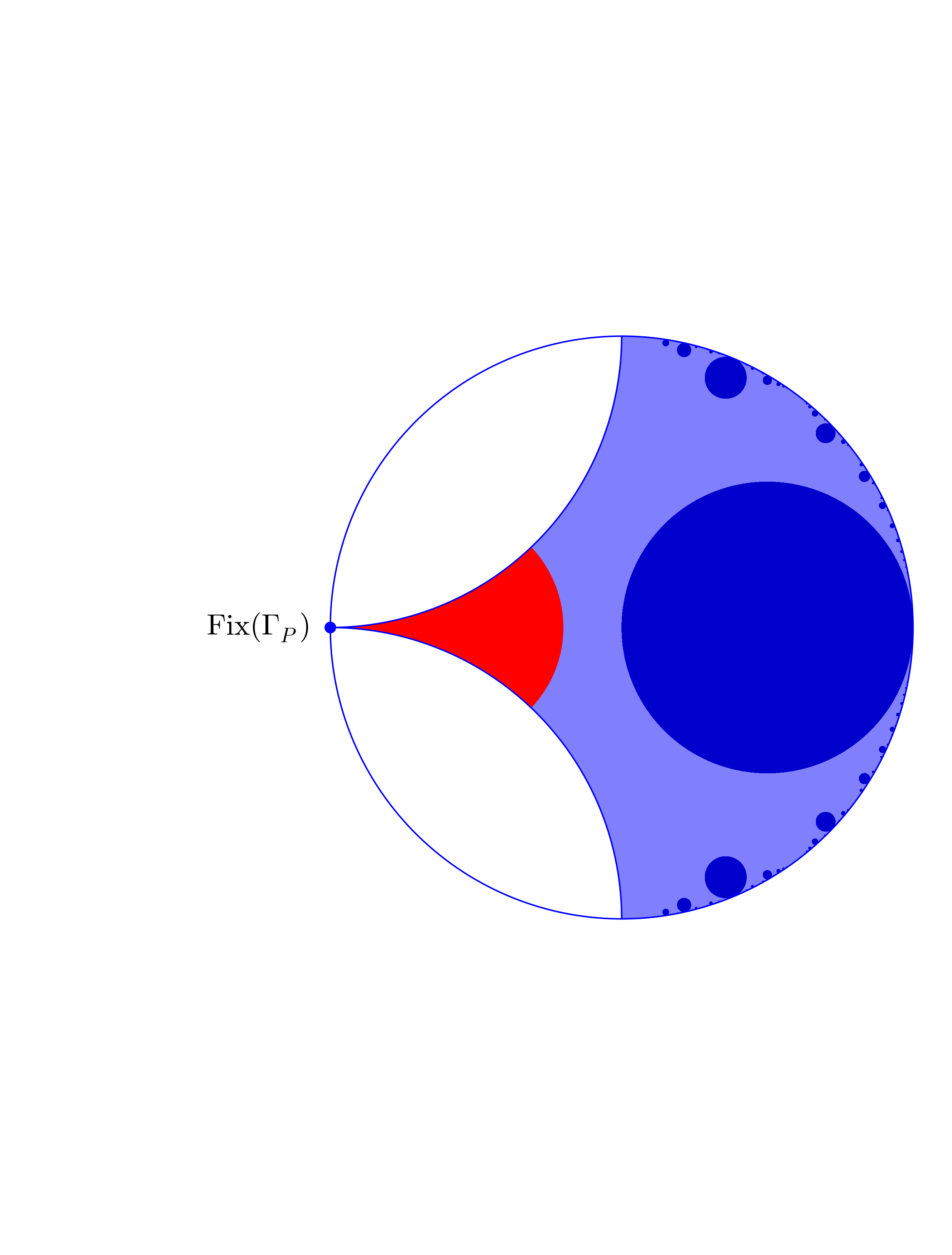}
\caption{\label{MYtilde} $B_\Lambda^Y$ is in blue, with $R$ in a darker shade.}
\end{figure}

\medskip

It remains to prove the stated asympotic expansion: what we did above shows that it suffices to prove that the integral $\int_{M^Y} \tr K_{\phi,p}^\Gamma(x,x) dx$ has such an expansion. Let $O$ be a fundamental parallelogram for $\Lambda$ in $N$ and $B_\Lambda$ the union of all geodesics from the fixed point of $N$ passing through $O$ (for example if $N = N_\infty$ is the upper triangular unipotent group which fixes $\infty$, identifying $N$ with $\CC$ we have $B_\Lambda = \{(z, t) : z \in O, t \in ]0, +\infty[ \}$). Define :
\begin{equation} \label{fundstrip}
B_\Lambda^Y = \{x\in B_\Lambda:\: y_P(x)\le Y\}. 
\end{equation}
Let $R$ be the union of the pieces of horoballs $\gamma (B_\Lambda - B_\Lambda^Y) \cap B_\Lambda$ for $\gamma \not \in \Gamma_P$ (see figure \ref{MYtilde}). Let $\wdt{M^Y}$ be the universal cover of $M^Y$, which is naturally identified with a subset of $\HH^3$. The strip $OA$ is a fundamental domain in $\HH^3$ for $\Lambda$, and it follows that $B_\Lambda^Y - R = B_\Lambda \cap \wdt{M^Y}$ is a fundamental domain in $\wdt{M^Y}$ for $\Lambda$. On the other hand, $\bigcup_{\gamma\in\Gamma/\Gamma_P} \gamma D^Y$ is also a fundamental domains in $\wdt{M^Y}$ for $\Lambda$ and it follows that we have the following expression for the sum over the unipotent elements: 
\begin{equation} \label{unfolding}
\begin{split}
\int_{M^Y} \sum_{\gamma\in\Gamma/\Gamma_P} \sum_{\eta\in\Lambda-\{0\}} h_{\phi,p}(d(x,\gamma\eta\gamma^{-1}x)) dx  &= \sum_{\gamma \in \Gamma/\Gamma_P} \int_{\gamma M^Y} \sum_{\eta\in\Lambda-\{0\}} h_{\phi,p}(d(x, \eta x)) dx \\ 
       &= \int_{B_\Lambda^Y} \sum_{\eta\in\Lambda-\{0\}} h_{\phi,p}(d(x, \eta x)) dx - \int_R \sum_{\eta\in\Lambda-\{0\}} h_{\phi,p}(d(x, \eta x)) dx. 
\end{split}
\end{equation}

We can bound $\int_R \sum_{\eta\in\Lambda-\{0\}} h_{\phi,p}(d(x, \eta x)) dx$ by using arguments similar to those used for $I_2$ above (see \eqref{hororeste}) and this shows that it is $o(1)$ as $Y \to +\infty$. The integral $\int_{M^Y} \tr K_{\phi,p}^\Gamma(x,x) dx$ can be decomposed as a sum over the elements of $\Gamma$ and using the conclusion of \eqref{unfolding} to modify the sum over unipotent elements we obtain: 
\begin{equation} \label{dec_sum} 
\begin{split}
\int_{M^Y} \tr K_{\phi,p}^\Gamma(x,x) dx &= \vol(M)\tr \left( k_{\phi,p}(x_0,x_0)\right) + \int_M\sum_{\gamma\in\Gamma_\lox} \tr \left(\gamma^*k_{\phi,p}(x,\gamma x) \right) dx \\
               &\quad + \int_{B_\Lambda^Y} \sum_{\eta\in\Lambda-\{0\}} h_{\phi,p} (d(x,\eta x)) dx + o(1). 
\end{split}
\end{equation}
Hence we need to get an asymptotic expansion when $Y\to\infty$ of:
\begin{equation} \label{dev_unip}
\int_{B_\Lambda^Y} \sum_{\eta\in\Lambda-\{0\}} h_{\phi,p} (d(x,\eta x)) dx = \int_{B_\Lambda^Y} \sum_{\eta\in\Lambda-\{0\}} h_{\phi,p} \left(\ell\left( \frac{|\eta|}{y_P(x)} \right)\right) dx. 
\end{equation}
The integrand is $N$-invariant so that the integral in \eqref{dev_unip} equals
\[
\vol(\Lambda)\int_0^Y \sum_{\eta\in\Lambda-\{0\}} h_{\phi,p}\left( \ell \left(\frac{|\eta|}y \right) \right) \frac{dy}{y^3}
\]
and by substituting $r = |\eta|/y$ in the right-hand side we obtain the following expression : 
\begin{equation} \label{qgeftqiazfyu}
\vol(\Lambda) \sum_{\eta\in\Lambda-\{0\}} \int_{|\eta|/Y}^{+\infty} h_{\phi,p}(\ell(r))\frac {rdr}{|\eta|^2} = \vol(\Lambda) \int_0^{+\infty} rh_{\phi,p}(\ell(r))\sum_{\substack{\eta\in\Lambda-\{0\}\\0<|\eta|\le rY}} \frac 1{|\eta|^2} dr.
\end{equation}
On the other hand, for any $R>0$ we get from integrating by parts (or Abel summation) that:
\[
\sum_{\substack{v\in\Lambda \\0<|v|\le R}}\frac 1{|v|^2} = \int_{\alpha_1(\Lambda)}^R \frac{d\mathcal{N}_\Lambda^*(\rho)}{\rho^2} = \frac{\mathcal{N}_\Lambda^*(R)}{R^2} + \int_{\alpha_1(\Lambda)}^R 2\frac{\mathcal{N}_\Lambda^*(\rho)}{\rho^3}d\rho
\]
and since we have
\[
\mathcal N^*(\rho) = \frac{\pi\rho^2}{\vol(\Lambda)} + E_\Lambda(\rho)
\]
we get that:
\begin{equation} \label{gadgzefiia}
\begin{split}
  \sum_{\substack{v\in\Lambda \\0<|v|\le R}}\frac 1{|v|^2} &= \frac{\pi}{\vol(\Lambda)} + \frac{E_\Lambda(R)}{R^2} + \frac{2\pi(\log(R)-\log\alpha_1(\Lambda))}{\vol(\Lambda)} + \int_{\alpha_1(\Lambda)}^R E_\Lambda(\rho)\frac{d\rho}{\rho^3} \\
  &= \frac{\pi}{\vol(\Lambda)} + \frac{E_\Lambda(R)}{R^2} + \frac{2\pi(\log(R)-\log\alpha_1(\Lambda))}{\vol(\Lambda)} + \int_{\alpha_1(\Lambda)}^{+\infty} E_\Lambda(\rho)\frac{d\rho}{\rho^3} - \int_{\max(R, \alpha_1(\Lambda))}^{+\infty} E_\Lambda(\rho)\frac{d\rho}{\rho^3}
\end{split}
\end{equation}
where the second line follows from the fact that the integral $\int_1^{+\infty} E_\Lambda(\rho)d\rho/\rho^3$ is absolutely convergent by Lemma \ref{ucount}. Putting 
\[
\kappa_\Lambda = \int_{\alpha_1(\Lambda)}^{+\infty} E_\Lambda(\rho)\frac{d\rho}{\rho^3} + \frac{\pi(1 - 2\log\alpha_1(\Lambda))}{\vol(\Lambda)}
\]
we can rewrite \eqref{gadgzefiia} as :
\begin{equation}
\sum_{\substack{v\in\Lambda \\0<|v|\le R}}\frac 1{|v|^2} = \frac{2\pi\log(R)}{\vol(\Lambda)} + \kappa_\Lambda - \int_{\max(R, \alpha_1(\Lambda))}^{+\infty} E_\Lambda(\rho) \frac{d\rho}{\rho^3} + \frac{E_\Lambda(R)}{R^2}. 
\end{equation}
Plugging this into \eqref{qgeftqiazfyu} we obtain: 
\begin{equation}
\begin{split}
\int_{B_\Lambda^Y} \sum_{\eta\in\Lambda-\{0\}} h_{\phi,p}(d(x,\eta x)) dx 
      &= \int_0^{+\infty} 2\pi\log(rY)r h_{\phi,p}(\ell(r)) dr + \kappa_\Lambda\vol(\Lambda) \int_0^{+\infty} h_{\phi,p}(\ell(r))dr \\
      &\quad - \vol(\Lambda) \int_0^{+\infty} rh_{\phi,p}(\ell(r)) \int_{\max(\alpha_1(\Lambda),rY)}^{+\infty} E_\Lambda(\rho)\frac{d\rho}{\rho^3} \, dr \\
      &\quad + \vol(\Lambda) \int_0^{+\infty} rh_{\phi,p}(\ell(r)) \frac{E_\Lambda(rY)}{(rY)^2} dr. 
\end{split}
\label{geomuni}
\end{equation}
The terms on the second and third lines are $O(Y^{-1})$, and plugging this expansion in \eqref{dec_sum} finishes the proof.
\end{proof}

%%%%%%%%%%%%%%%%%%%%%%%%%%%%%%%%%%%%%%%%%%%%%%%%%%%%%%%%%%%%

\subsection{Spectral side}

The decomposition $L^2 = L_\disc^2 \oplus L_\eis^2$ from \eqref{spectral_decomposition} induces a splitting of the operators $T^YK_{\phi, p}^\Gamma$ into $T^Y(K_{\phi, p}^\Gamma)_\disc \oplus T^Y(K_{\phi, p}^\Gamma)_\eis$. It is well-known that the operators $(K_{\phi,p}^\Gamma)_\disc$ are trace-class (see e.g. \cite[Theorem 4.3]{Warner}). All these operators have integrable kernels and we have
\[
\otr(K_{\phi,p}^\Gamma)_\disc = \int_M (K_{\phi,p}^\Gamma)_\disc(x, x) dx.
\]
We will denote by $\otr(T^YK_{\phi, p}^\Gamma)$ the integral of the kernel $T^YK_{\phi, p}^\Gamma$ on $M$. We have computed it from the geometric expansion in Proposition \ref{geom_side}, now we will use the Maass--Selberg relations to compute it from the spectral decomposition. We note that our computation is essentially the same as that of the ``third parabolic term'' in \cite[Section 4]{Warner}---see especially p. 85 in loc. cit. 

\begin{prop} \label{spec_side}
For any $Y\in [1,+\infty[^h$ we have the following asymptotic expansions as $\min_j(Y_j)\to+\infty$ (we put $d_l = \dim(W_l)$): 
\begin{align}
\begin{split} \label{traceexpansion0} 
\otr(T^Y K_{\phi,0}^\Gamma) &= \sum_{j=1}^h \frac {\log Y_j}{\pi} \int_{-\infty}^{+\infty} \sum_{l=-2q}^{2q} d_l \phi\left(l^2 + u^2 + \lambda_V \right) du  \\ 
                         &\quad - \frac 1{2\pi} \int_{-\infty}^{+\infty} \sum_{l=-2q}^{2q} \phi\left(l^2 + u^2 + \lambda_V \right) \tr \left( \Psi_l(iu)^{-1}\frac{d\Psi_l(iu)}{du} \right) du  \\ 
                         &\quad + \otr (K_{\phi,0}^\Gamma)_\disc + \frac 1 4 \sum_{l=-2q}^{2q} \phi\left(l^2 + \lambda_V \right) \tr\Psi_l(0) + o(1)
\end{split}
\\
\begin{split} \label{traceexpansion1}
\otr(T^Y K_{\phi,1}^\Gamma)    &= \otr(T^Y (K_{\phi,0}^\Gamma)_\eis) + \sum_{j=1}^h \frac {2\log Y_j}{\pi} \int_{-\infty}^{+\infty}  \sum_{l=-2q-2}^{2q+2} d_l \phi\left((l\mp 2)^2-4+u^2+\lambda_V \right) du \\
                            &\quad +\otr (K_{\phi,1}^\Gamma)_\disc - \frac 1{2\pi} \int_{-\infty}^{+\infty} \sum_{l=-2q-2}^{2q+2} \phi\left((l\mp 2)^2 + u^2 + \lambda_V \right) \tr(\Phi_l(iu)^{-1}\frac{d\Phi_l(iu)}{du}) du + o(1) 
\end{split}
\end{align}
(here $\otr(T^Y (K_{\phi,0}^\Gamma)_\eis)$ is the trace of the restriction of $T^Y (K_{\phi,0}^\Gamma$ to the subspace $L^2_\eis$ spanned by Eisenstein series and it is given by \eqref{traceexpansion0} minus the term $\otr (K_{\phi,0}^\Gamma)_\disc$). 
\end{prop}

\begin{proof}
We can compute the operation of automorphic kernels on the continuous part of $L^2(M;V)$ in the following way. Let $\psi\in L^2(\RR)$ and $v\in W_l$, we have:
\[
K_{\phi,p}^\Gamma E(\psi,\omega) = \int_{-\infty}^{+\infty}\phi \left((l\mp 2)^2 + u^2 + \lambda_V \right) \psi(u)E(iu,\omega)du
\]
Put $d=\dim V$, choose an orthonormal basis $v_k,\: k=1,\ldots,dh$ for $V$ where all $v_k\in (W_{l_k})^h$. From the preceding identity and \eqref{KYF=KFY} it follows that
\[
 \otr(T^Y (K_{\phi,0}^\Gamma)_\eis) = \int_{-\infty}^{+\infty} \sum_{k=1}^{dh} \phi \left(l_k^2 + u^2 + \lambda_V \right) \|T^Y E(iu, v_k)\|^2 du. 
\]
Now expanding $ \|T^Y E(iu,v_k)\|^2$ using the Maass-Selberg reletions \eqref{MS0} yields:
\[
\begin{split}
\otr(T^Y K_{\phi,0}^\Gamma) &= \otr (K_{\phi,0}^\Gamma)_\disc \\
                         &\quad + \sum_{j=1}^h \frac {\log Y_j}{\pi} \int_{-\infty}^{+\infty} \sum_{l=-2q}^{2q} d_l \phi\left(l^2 + u^2 + \lambda_V \right) du\\
                            &\quad - \frac 1{2\pi} \int_{-\infty}^{+\infty} \sum_{l=-2q}^{2q} \phi\left(l^2 + u^2 + \lambda_V \right) \tr \left( \Psi_l(iu)^{-1}\frac{d\Psi_l(iu)}{du} \right) du\\
                            &\quad + \frac 1{2\pi} \sum_{j=1}^h \sum_{l=-2q}^{2q} \phi\left(l^2 + u^2 + \lambda_V \right)\int_{-\infty}^{+\infty}\frac{Y_j^{iu}\tr\Psi_l(-iu)-Y_j^{-iu}\tr\Psi_l(iu)}{iu} du + o(1)
\end{split}
\]
To deduce \eqref{traceexpansion0} we must deal with the last line: but a classical computation (cf. \cite[Proposition 5.3 in Chapter 6]{EGM}) shows that for any function $\xi\in\mathcal{S}(\mathbb{R})$ one has
\[
\lim_{Y\rightarrow\infty}\left(\int_{-\infty}^{+\infty} \xi(u)\frac {Y^{2iu}\tr\Psi_l(-iu)-Y^{-2iu}\tr\Psi_l(iu)}{2iu}du\right) = \frac 1 4 \xi(0)\tr\Psi_l(0) 
\]
and hence we are finished. The proof of \eqref{traceexpansion1} is exactly similar, using \eqref{MS1} in addition. 
\end{proof}

%%%%%%%%%%%%%%%%%%%%%%%%%%%%%%%%%%%%%%%%%%%%%%%%%%%%%%%%%%%%

\subsection{Trace formula}

The output of the work done in the previous two subsections is the following result, an avatar of the Selberg Trace Formula. We do not push further the analysis of the loxodromic summands on the geometric side since we will not need it. 

\begin{theo} \label{Selberg}
For any $\phi\in{\mathcal A}(\RR)$ the operators $(K_{\phi,p}^\Gamma)_\disc$ are trace-class and we have the equality for $p=0$:
\begin{align*}
&\otr (K_{\phi,0})_\disc    \\
&\quad - \frac 1{2\pi} \int_{-\infty}^{+\infty} \sum_{l=-2q}^{2q} \phi\left(l^2 + u^2 + \lambda_V \right) \tr(\Psi_l(iu)^{-1}\frac{d\Psi_l(iu)}{du}) du &   \\
&\quad + \frac 1 4 \sum_{l=-2q}^{2q} \phi\left(l^2 + \lambda_V \right) \tr\Psi_l(0) \\
&\qquad\qquad =\otr_\Gamma k_{\phi,0} + \int_M \sum_{\gamma\in\Gamma_\lox} \tr(\gamma^*k_{\phi,0}(x,\gamma x)) dx \\
&\qquad\qquad\quad + 2\pi h \int_0^{+\infty} r\log(r)h_{\phi,0}(\ell(r))dr  + \sum_{j=1}^h \kappa_j \vol(\Lambda_j)\int_0^{+\infty}rh_{\phi,0}(\ell(r))dr. 
\end{align*}
A similar equality holds for $p=1$, replacing the right-hand side above by the appropriate spectral terms according to \eqref{traceexpansion1}. 
\end{theo}

\begin{proof}
Let $B',B$ denote respectively the right-hand side and the left-hand side of the equality in the statement; the equality between $B$ and $B'$ follows from the fact that we have written the expansion as $\min_j(Y_j)\to+\infty$ of $\otr (T^YK_{\phi,p}^\Gamma)$ as either $A\log Y + B + o(1)$ (Proposition \ref{geom_side}) and $A'\log Y + B' + o(1)$ (Proposition \ref{spec_side}). 
\end{proof}

%%%%%%%%%%%%%%%%%%%%%%%%%%%%%%%%%%%%%%%%%%%%%%%%%%%%%%%%%%%%

\subsection{Asymptotics of regularised traces}

Let $M=\Gamma\bs\HH^3$ be a finite-volume hyperbolic three--manifold. For a function $\phi\in{\mathcal A}(\RR)$ we define $\otr_R\left(\phi(\Delta^p[M])\right)$, which we will also denote by $\otr_R K_{\phi,p}^\Gamma$, to be either side of the equality in Theorem \ref{Selberg}. The convenient form in which we wrote the trace formula allows the following result to be proven very easily. 

\begin{theo} \label{conv1}
Let $\Gamma_n$ be a sequence of torsion-free lattices in $G$ which contain no element with trace $-2$ and such that $M_n=\Gamma_n\bs\HH^3$ is BS-convergent to $\HH^3$. Suppose that the height functions on the $M_n$ are chosen such that 
\begin{equation} \label{norm_height}
\sum_{j=1}^{h_n} \left|\log\alpha_1(\Lambda_{j,n})\right| = o(\vol M_n) 
\end{equation}
(where the notation is as in Lemma \ref{hypBS}). Suppose also that the following condition holds:
\begin{equation} \label{square!}
\sum_{j=1}^{h_n} \frac{\alpha_2(\Lambda_{n,j})^2}{\alpha_1(\Lambda_{n,j})^2} = o(\vol M_n). 
\end{equation}
Then we have the limit
\begin{equation} \label{convtr1}
\lim_{n\to\infty} \frac {\otr_R (K_{\phi, p}^{\Gamma_n})}{\vol(M_n)} = \otr^{(2)}(k_{\phi, p}). 
\end{equation}
\end{theo}

\subsubsection{Remarks}
\begin{enumerate}
\item For cusp-uniform sequences, the condition \eqref{square!} reduces to $h_n=o(\vol M_n)$, which always holds for BS-convergent sequences by Lemma \ref{nbcuspsBS}. 

\item The hypothesis \eqref{norm_height} on the height functions is satisfied if we take a sequence of covers of some given orbifold $M$ and the pull-back of the height functions on $M$ (see Lemma \ref{sumalpha} below). 

\item If $y_n,y_n'$ are two height functions on $M_n$ which both satisfy \eqref{norm_height} then we have $\log(y_n/y_n')=o(\vol M_n)$ (indeed, high enough in the $j$th cusp the function $y_n/y_n'$ is constant and equals $\alpha_1(\Lambda_{n,j}) / \alpha_1'(\Lambda_{n,j})$). 
\end{enumerate}

\subsubsection{Proof of Theorem \ref{conv1}}
Let $h_n$ be the number of cusps of $M_n$; we choose representatives $P_1,\ldots,P_{h_n}$ of the $\Gamma_n$-classes of $\Gamma$-rational parabolic subgroups and s before denote by $\Lambda_{n,j}$ the Euclidean lattice $(\Gamma_n)_{P_j}$ inside $N_j$ where $N_j$ is the unipotent radical of $P_j$, identified with the horosphere $\{y_{P_j}=1\}$. 

For $p=0,1,2,3$ we have $3d\ge\dim V\otimes\wedge^p\LP$ so that 
\begin{equation} \label{kernel_majo}
\tr(\gamma^*k_{\phi, p}(x,\gamma x))\le 3d |\rho(\gamma^{-1})|_V |k_{\phi, p}(x,\gamma x)|. 
\end{equation}
For $x = gK,y = g'K\in\HH^3$ we put $H(d(x,y)) = 3d|\rho(g^{-1}g')| \cdot |k_{\phi,p}(x,y)|$, then we have $H(r)\ll e^{-ar}$ for all $a>0$ as $r\to\infty$. We first want to estimate:
\[
H_n = \int_{M_n}\sum_{\gamma\in\Gamma_\lox} H(d(x,\gamma x)) dx.
\]
which is done in the following lemma.

\begin{lem} \label{limhyp}
If $M_n$ BS-converges to $\HH^3$ then $H_n = o(\vol M_n)$. 
\end{lem}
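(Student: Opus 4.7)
I would split the inner sum at a cutoff $R>0$, writing $H_n = A_n(R) + B_n(R)$ with
\begin{equation*}
A_n(R) = \int_{D_n}\sum_{\substack{\gamma\in\Gamma_\lox\\ d(x,\gamma x)\le R}} H(d(x,\gamma x))\, dx, \quad B_n(R) = \int_{D_n}\sum_{\substack{\gamma\in\Gamma_\lox\\ d(x,\gamma x)> R}} H(d(x,\gamma x))\, dx.
\end{equation*}
The short-distance part $A_n(R)$ is handled by the BS-hypothesis \eqref{hypBSloc} almost tautologically: since the kernel $k_\phi$ is smooth, $H$ is continuous on $[0,\infty)$ and thus bounded by some $C_R$ on the compact interval $[0,R]$, whence
\begin{equation*}
A_n(R) \le C_R \int_{M_n}\mathcal{N}_{\Gamma_n}(x,R)\, dx = o(\vol M_n)
\end{equation*}
for each fixed $R$.

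For the long-distance part $B_n(R)$ I would combine the rapid decay $H(r) \ll_A e^{-Ar}$ (for arbitrary $A>0$) with a uniform-in-$n$ averaged counting bound of the shape
\begin{equation*}
\int_{M_n}\mathcal{N}_{\Gamma_n}(x,r)\, dx \ll \vol(M_n)\, e^{2r}
\end{equation*}
with absolute implicit constant. Granting this, integration by parts in $r$ yields $B_n(R) \ll_A \vol(M_n)\, e^{-(A-2)R}$, which can be made $o(\vol M_n)$ by choosing $A>2$ and letting $R$ be large. A diagonal argument then lets one pick $R = R_n \to\infty$ slowly enough that both $A_n(R_n)/\vol M_n$ and $B_n(R_n)/\vol M_n$ tend to $0$, concluding $H_n = o(\vol M_n)$.

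The averaged counting estimate I would derive by unfolding over conjugacy classes,
\begin{equation*}
\int_{D_n}\mathcal{N}_{\Gamma_n}(x,r)\, dx = \sum_{[\gamma_0]\text{ prim.\ lox.}}\sum_{k\ge 1}\vol_{\langle\gamma_0\rangle\bs\HH^3}\{x : d(x,\gamma_0^k x)\le r\},
\end{equation*}
and then bounding each term by $\ell(\gamma_0)\pi\sinh^2\rho_k(r)$, where $\rho_k(r)$ is the radius of the tube $\{d(x,\gamma_0^k x)\le r\}$ around the common axis. One controls the sum via disjointness of Margulis tubes and an elementary upper bound on the number of primitive closed geodesics of length at most $r$ in $M_n$ (at worst $\ll \vol(M_n)\, e^{2r}$).

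The hard part will be precisely this uniform counting estimate: the pointwise bound \eqref{hypcount} has an implicit constant depending on $\Gamma_n$, which does not stay uniform across a BS-convergent sequence since the injectivity radius of $M_n$ may collapse. Consequently one cannot dominate the integrand pointwise by a single exponential function valid for all $n$; the globally-averaged estimate obtained by the unfolding argument is genuinely needed.
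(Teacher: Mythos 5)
Your proof is essentially the same argument the paper gives, organized slightly differently: the paper integrates $H$ by parts against $d\mathcal{N}_{\Gamma_n}(x,\cdot)$, swaps the order of integration, and then invokes dominated convergence; your version splits the sum at a cutoff $R$ and finishes with a diagonal argument. These are equivalent ways of saying the same thing: the small--$r$ contribution is handled by the BS hypothesis \eqref{hypBSloc} (condition (i) of Lemma \ref{hypBS}), and the large--$r$ tail is handled by the rapid decay of $H$ together with a bound on the averaged count that is uniform in $n$.

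Where your write-up adds genuine value is in the last paragraph: you are right to flag that the pointwise bound $\mathcal{N}_{\Gamma_n}(x,r)\le Ce^{cr}$ cannot hold with constants that are both absolute and independent of $n$. As soon as the sequence $M_n$ contains arbitrarily short closed geodesics (which BS-convergence to $\HH^3$ permits), a point $x$ on the corresponding short axis has $\mathcal{N}_{\Gamma_n}(x,r)\gg r/\ell_0$, which is unbounded in $n$. The paper's phrase ``for absolute $C,c$'' is therefore an imprecision; compare with \eqref{hypcount}, where the constant $C$ is explicitly allowed to depend on $\Gamma$. What the dominated-convergence step really needs, and what your unfolding-over-conjugacy-classes sketch supplies, is the \emph{averaged} estimate $\int_{D_n}\mathcal{N}_{\Gamma_n}(x,r)\,dx\ll e^{cr}\vol M_n$ with absolute constants, obtained by summing volumes of tubes $\{d(x,\gamma x)\le r\}$ in the cyclic quotients $\langle\gamma_0\rangle\bs\HH^3$. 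This is precisely what the paper defers to with ``dealt with exactly as in \cite{7S}'': the uniform averaged orbital count for non-compact hyperbolic manifolds is established in \cite[Section 9]{7S}. So your proposal is correct and in fact cleaner about the one point where the paper's wording is misleading; the only thing you should not re-derive from scratch is the averaged orbital bound itself, for which a citation to \cite[Section 9]{7S} suffices.
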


\begin{proof}
For $a$ large enough (so that all the integrals below are absolutely convergent) we have
\begin{align*}
H_n &\ll \int_{M_n}\int_0^{+\infty} e^{-ar}d\mathcal{N}_{\Gamma_n}(x,r) dx \\
    & = a \int_{M_n} \int_0^{+\infty} e^{-ar}\mathcal{N}_{\Gamma_n}(x,r) dr dx = a\int_0^{+\infty} e^{-ar}\int_{M_n}\mathcal{N}_{\Gamma_n}(x,r)dx dr. 
\end{align*}
If we add the hypothesis that $\sys(\Gamma_n)\ge\delta>0$ for all $n$ then the lemma is a consequence of \eqref{loxcount} (which imply that the sequence of functions $r\mapsto e^{-ar}\int_{M_n}\mathcal{N}_{\Gamma_n}(x,r)/\vol(M_n) dx$ is dominated), Lemma \ref{hypBS} and Lebesgue's theorem. In general one needs to study in addition the integral on the Margulis tubes near small closed geodesics; this is carried out in the proof of Theorem 7.14 in \cite{7S}. 
\end{proof}

We have:
\begin{equation} \label{cusp_facile}
C_n := 2\pi h_n \int_0^{+\infty}r\log(r)h(\ell(r))dr = o(\vol M_n) 
\end{equation}
by Lemma \ref{nbcuspsBS}. To conclude we need also the following asymptotic estimate: 
\begin{equation} \label{cusp_dur}
U_n := \sum_{j=1}^{h_n} \kappa_{n, j}\vol(\Lambda_{n,j})\int_0^{+\infty}rh(\ell(r))dr = o(\vol M_n)
\end{equation}
where we denote
\[
\kappa_{n, j} := \kappa_{\Lambda_{n, j}} = \int_{\alpha_1(\Lambda_{n,j})}^{+\infty} E_{\Lambda_{n,j}}(\rho)\frac{d\rho}{\rho^3} + \frac{\pi(1-2\log\alpha_1(\Lambda_{n,j})}{\vol(\Lambda_{n,j})} .
\]
We get from Lemma \ref{ucount} the following estimate: 
\begin{align*}
\int_{\alpha_1(\Lambda_{n,j})}^{+\infty} E_{\Lambda_{n,j}}(\rho)\frac{d\rho}{\rho^3} 
            &\ll \frac 1{\alpha_1(\Lambda_{n,j})} \int_{\alpha_1(\Lambda_{n,j})}^{+\infty} (\rho + \alpha_2(\Lambda_{n,j})) \frac{d\rho}{\rho^3} \\
            &= \frac 1{\alpha_1(\Lambda_{n,j})^2} + \frac{\alpha_2(\Lambda_{n,j})}{\alpha_1(\lambda_{n,j})^3} \\
            &\ll \frac 1{\vol(\Lambda_{n,j})} \left( \frac{\alpha_2(\Lambda_{n,j})}{\alpha_1(\Lambda_{n,j})} + \frac{\alpha_2(\Lambda_{n,j})^2}{\alpha_1(\Lambda_{n,j})^2} \right)
\end{align*}
with a constant that does not depend on $n$ or $j$, and it follows that 
\[
\kappa_{n, j}\vol(\Lambda_{n,j}) \ll \frac{\alpha_2(\Lambda_{n,j})^2}{\alpha_1(\Lambda_{n,j})^2}+\log\alpha_1(\Lambda_n^j). 
\]
so that by the hypothesis of the theorem, 
\[
U_n\ll \sum_{j=1}^{h_n} \log\alpha_1(\Lambda_n^j) + o(\vol M_n)  
\] 
and the right-hand side is an $o(\vol M_n)$ according to the assumption on the height functions.

Now the summands in $\otr_R K_{\phi, p}^{\Gamma_n}$ given by the right-hand side of the trace formula in Theorem \ref{Selberg} are, with the exception of $\otr_\Gamma(k_{\phi, p}) \vol(M_n)$, majorised by $U_n + C_n + H_n$ according to \eqref{kernel_majo}. So it follows from \eqref{cusp_facile}, \eqref{cusp_dur} and Lemma \ref{limhyp} that
\[
|\otr_R (K_{\phi, p}^{\Gamma_n}) - \otr_\Gamma(k_{\phi, p}) \vol(M_n)| = o(\vol M_n)
\]
which proves the theorem.  

%%%%%%%%%%%%%%%%%%%%%%%%%%%%%%

\subsubsection{Height functions in coverings}

\begin{lem} \label{sumalpha}
Suppose that $M_n$ is a sequence of finite covers of a finite--volume orbifold $M$ and that the height functions are pulled back from those (chosen arbitrarily) on $M$. Then $\sum_{j=1}^{h_n} \alpha_1(\Lambda_{n,j}) = o(\vol M_n)$.
\end{lem}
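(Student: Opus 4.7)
The plan is to split the sum over cusps of $M_n$ according to which cusp of $M$ each lies above, and to control each block by the ramification data of the cover. Since $M$ has only finitely many cusps $c_1,\ldots,c_r$, it suffices to bound, for each such $c$ with stabilizer $\Gamma_c$ and associated Euclidean lattice $\Lambda_c$, the partial sum $\sum_{\tilde c\mid c}\alpha_1(\Lambda_{\tilde c})$ over cusps $\tilde c$ of $M_n$ above $c$.

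I would first parametrize the cusps above $c$: they correspond to double cosets in $\Gamma_n\bs\Gamma/\Gamma_c$. For representatives $g_1,\ldots,g_{h_n^c}$, the cusp $\tilde c_i$ carries the lattice $\Lambda_i:=g_i^{-1}\Gamma_n g_i\cap\Gamma_c$, a sublattice of $\Lambda_c$ of index $e_i:=[\Lambda_c:\Lambda_i]$. Because the height functions are pulled back from $M$, the Euclidean structure on $\Lambda_i$ coincides (via $g_i$-conjugation) with the one it inherits as a sublattice of $\Lambda_c$. A standard double-coset computation, counting $\Gamma_n$-cosets inside each $\Gamma_n g_i\Gamma_c$, gives
$$
\sum_{i=1}^{h_n^c}e_i=[\Gamma:\Gamma_n]=\vol M_n/\vol M.
$$

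The key estimate is then Minkowski's first theorem in dimension $2$, which yields $\alpha_1(\Lambda_i)^2\le (4/\pi)\vol\Lambda_i=(4/\pi)e_i\vol\Lambda_c$, so $\alpha_1(\Lambda_i)\le C_c\sqrt{e_i}$ with $C_c$ depending only on the base lattice $\Lambda_c$. Cauchy--Schwarz then gives
$$
\sum_{i=1}^{h_n^c}\alpha_1(\Lambda_i)\le C_c\sum_i\sqrt{e_i}\le C_c\sqrt{h_n^c\cdot\sum_i e_i}=C_c\sqrt{h_n^c\cdot\vol M_n/\vol M}.
$$
Summing over the finitely many cusps of $M$ yields $\sum_j\alpha_1(\Lambda_{n,j})\ll\sqrt{h_n\cdot \vol M_n}$, and the conclusion follows from $h_n=o(\vol M_n)$, which is Lemma~\ref{nbcuspsBS} in the ambient BS-convergent setting of this section.

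The main obstacle is precisely this last input: the lemma as written does not spell out a BS-convergence hypothesis, yet the conclusion must fail without some control on $h_n$. Indeed, a tower of totally unramified covers would give $h_n\asymp\vol M_n$ with every $\alpha_1(\Lambda_{n,j})=\alpha_1(\Lambda_c)$ bounded below, so $\sum_j\alpha_1(\Lambda_{n,j})\asymp\vol M_n$. Consequently the lemma is to be read within the standing BS-convergence hypothesis of this section, which furnishes $h_n=o(\vol M_n)$ for free and drives the square-root gain from Cauchy--Schwarz down to $o(\vol M_n)$.
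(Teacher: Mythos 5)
Your proof is correct and uses essentially the same ingredients as the paper's: Minkowski's theorem to get $\alpha_1(\Lambda_{n,j})\ll\sqrt{e_j}$, the double-coset count $\sum_j e_j = h_1[\Gamma:\Gamma_n]$, and the bound $h_n = o(\vol M_n)$ from Lemma~\ref{nbcuspsBS}. The only packaging difference is that you combine these via Cauchy--Schwarz to get $\sum_j\alpha_n^j\ll\sqrt{h_n\vol M_n}$ directly, whereas the paper orders the cusps by $\alpha_n^j$, splits at a threshold $C$ into contributions $\le Ch_n$ and $\le C^{-1}\sum_j e_j$, and then lets $C\to\infty$ — this threshold split, optimized over $C$, is precisely Cauchy--Schwarz in disguise, so the two proofs are the same argument. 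You are also right that the lemma as literally stated lacks a hypothesis forcing $h_n=o(\vol M_n)$ and that this is supplied by the ambient BS-convergence assumption; the paper's proof makes this explicit by citing Lemma~\ref{nbcuspsBS} at the end.
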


\begin{proof}
We show that for all $C>0$ we have
\begin{equation} \label{limsup}
\limsup_n\frac{\sum_{j=1}^{h_n} \alpha_1(\Lambda_{n,j})}{\vol M_n} \le C^{-1}.
\end{equation}
We order the $P_j$ so that $\alpha_1(\Lambda_{n,j})$ is increasing with $j$ and denote by $h_n^C$ the largest index such that $\alpha_1(\Lambda_{n,j})<C$ for all $j\le h_n^C$. Then:
\begin{align*}
\sum_{j=1}^{h_n} \alpha_1(\Lambda_{n,j}) & \ll Ch_n^C + \sum_{j=h_n^C+1}^{h_n} (\alpha_1(\Lambda_{n,j}))^{-1} [\Lambda_j:\Lambda_{n,j}] \ll Ch_n + (\alpha_1(\Lambda_{n,h_n^C+1})^{-1} \sum_{j=h_n^C+1}^{h_n} [\Lambda_j:\Lambda_{n,j}] \\
              & \le C h_n + C^{-1}h_1[\Gamma:\Gamma_n]
\end{align*}
where $h_1$ is the number of cusps of $M$. The conclusion \eqref{limsup} then follows at once from Lemma \ref{nbcuspsBS}. 
\end{proof}

%%%%%%%%%%%%%%%%%%%%%%%%%%%%%%%%%%%%%%%%%%%%%%%%%%%%%%%%%%%%%%%%%%%%%%%%%%%%%%%%

\section{Analytic torsion and approximation}

\label{torsion}
From now on we fix a strongly acyclic representation $\rho,V$ of $G$ and all forms are taken with coefficients in $E_\rho$. We will define the regularised torsions $T_R(M;V)$ and the $L^2$-torsion $t^{(2)}(V)$ in Section \ref{sec_deftors} below, and prove the following result. 

\begin{theo} \label{Main1}
Let $M_n:=\Gamma_n\backslash\HH^3$ be a sequence of finite--volume hyperbolic three--manifolds (together with height functions) satisfying the assumptions of Theorem \ref{conv1}. Suppose in addition that the systole of the $M_n$ is bounded away from 0, and that there exists $\eps>0$ and a sequence $a_n=o(\vol M_n)$ such that for all $n\ge 1$, $l = -2q, \ldots, 2q$ and $u\in[-\eps,\eps]$ we have
\[
\tr\left( \Psi_l(iu)^{-1}\frac{d\Psi_l(iu)}{du} \right) \le a_n, \: \tr\left( \Phi_l(iu)^{-1}\frac{d\Phi_l(iu)}{du} \right) \le a_n.
\]
Then we have 
\begin{equation} \label{limtors2}
\lim_{n\to\infty}\frac{T_R(M_n;V)}{\vol(M_n)} = t^{(2)}(V).
\end{equation}
\end{theo}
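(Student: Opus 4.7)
The plan is to follow the scheme of \cite[Section 4]{BV}, splitting the Mellin-regularized defining integral for $T_R(M_n;V)$ at a time $t_0>0$ and comparing each piece, normalized by $\vol M_n$, with the corresponding piece of $t^{(2)}(V)$ built from the $\Gamma$-trace $\otr_\Gamma k_t^p$ on $\HH^3$. The small-time part $t\in(0,t_0]$ is treated via the geometric trace formula (Proposition \ref{geomside}) and the convergence result Theorem \ref{conv1}; the large-time part $t\in[t_0,\infty)$ via the spectral expansion \eqref{MS} combined with strong acyclicity (Proposition \ref{strongacyclicity}) and the new hypothesis on intertwining operators.

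For the small-time part I apply Theorem \ref{conv1} to $\phi=\phi_t$, the heat-kernel evaluator $\phi_t(\lambda)=e^{-t\lambda}$, which lies in $\mathcal{A}(\RR)$ for every $t>0$; this gives the pointwise-in-$t$ convergence of $\otr_R K_t^{p,\Gamma_n}/\vol M_n$ to its $L^2$ analogue. The key point I must verify is that the three error terms appearing in the proof of Theorem \ref{conv1} -- the loxodromic contribution controlled by Lemma \ref{limhyp}, the $h_n$-weighted $\log r$ term, and the $\sum_j\kappa_{j,n}\vol\Lambda_{n,j}$ term -- when combined with the short-time heat-kernel asymptotic expansion, remain integrable against $t^{s-1}\,dt$ near $t=0$ uniformly in $n$. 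Strong acyclicity makes the resulting Mellin transform holomorphic at $s=0$, and a dominated convergence argument then lets me exchange $\lim_n$ with the Mellin derivative at $s=0$.

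For the large-time part I use the spectral expansion \eqref{MS} (and its degree-$p$ analogues). Strong acyclicity gives
\[
\otr(K_t^{p,\Gamma_n})_\disc\le e^{-(t-t_0/2)\lambda_0}\otr(K_{t_0/2}^{p,\Gamma_n})_\disc,
\]
and after normalization by $\vol M_n$ the right-hand side is dominated uniformly in $n$ by $Ce^{-t\lambda_0/2}$, so by dominated convergence on $[t_0,\infty)$ the discrete piece converges to its $L^2$ analogue. The residual term $\tfrac14\sum_l d_l\phi_t(\cdots)\tr\Psi_l(0)$ is an $O(h_n)=o(\vol M_n)$ by Lemma \ref{nbcuspsBS}. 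The delicate scattering integral
\[
\frac{1}{2\pi}\int_{-\infty}^{+\infty}\sum_l d_l\phi_t(u^2-4+l^2+\lambda_V)\tr\bigl(\Psi_l(iu)^{-1}\Psi_l'(iu)\bigr)\,du
\]
I split at $|u|=\eps$: on $[-\eps,\eps]$ the hypothesis of the theorem bounds $|\tr(\Psi_l^{-1}\Psi_l')|$ by $a_n=o(\vol M_n)$, and since $\phi_t$ is integrable over $[-\eps,\eps]$ this contribution divided by $\vol M_n$ tends to zero uniformly in $t\ge t_0$; on $|u|\ge\eps$, strong acyclicity forces $\phi_t(u^2+\cdots)\le e^{-t(\eps^2+c_V)}$, which swallows the at-most-polynomial growth of $\tr(\Psi^{-1}\Psi')$ (extractable from the positivity of the Maass--Selberg relations \eqref{MS0}--\eqref{MS1}) and makes the contribution exponentially small for $t\ge t_0$. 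The corresponding degree-1 term with $\Phi$ is handled identically.

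The main obstacle, and the precise reason for the explicit hypothesis on $\Psi,\Phi$, is the scattering term near $u=0$: the $L^2$-torsion has no such continuous contribution (the Plancherel measure on $\HH^3$ is smooth at $u=0$), yet on the finite-volume quotients $M_n$ the traces $\tr(\Psi_l(iu)^{-1}\Psi_l'(iu))$ can, a priori, grow faster than $\vol M_n$ as $n\to\infty$, defeating any approximation. The bound $|\tr(\Psi_l^{-1}\Psi_l')|\le a_n=o(\vol M_n)$ on $[-\eps,\eps]$ is exactly what is needed to cure this pathology, and is the only place in the argument where information beyond Benjamini--Schramm convergence and the condition \eqref{square!} is required.
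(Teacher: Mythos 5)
Your overall scheme --- splitting the torsion zeta-integral at $t_0$, handling $(0,t_0]$ via the geometric trace formula and Theorem \ref{conv1}, and handling $[t_0,\infty)$ via the spectral expansion \eqref{MS} together with strong acyclicity and the intertwining hypothesis --- is the paper's scheme, and the small-time analysis you sketch is sound. But there is a real gap in your large-time treatment of the scattering integral, exactly at the point where the hypothesis on $\Psi,\Phi$ is used. The hypothesis bounds $\tr\bigl(\Psi_l(iu)^{-1}\frac{d\Psi_l(iu)}{du}\bigr)$ from \emph{above} by $a_n$ on $[-\eps,\eps]$; it does not give a bound in absolute value, yet you write $|\tr(\Psi_l^{-1}\Psi_l')|\le a_n$. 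Likewise, for $|u|\ge\eps$ the Maass--Selberg relation at $Y=1$ shows that $\Psi(iu)^{-1}\frac{d\Psi(iu)}{du}\le 2\eps^{-1}1$ as a self-adjoint operator (the term $\|T^1E(iu,v)\|^2$ has the favourable sign), but it gives no lower bound on the trace. So in neither region do you have two-sided control, and the scattering integral could a priori be a large positive quantity (if $\tr\Psi^{-1}\Psi'$ is very negative somewhere), which would defeat the $o(\vol M_n)$ estimate you want uniformly in $t\ge t_0$. The same issue hides in your discrete-spectrum estimate: to assert that $\otr(K_{t_0/2}^{p,\Gamma_n})_\disc/\vol M_n$ is bounded you need to extract the discrete part from the regularized trace furnished by Theorem \ref{conv1}, which again requires one-sided control of the continuous contribution.

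The paper's proof handles this differently. Combining the hypothesis on $[-\eps,\eps]$ with the Maass--Selberg bound on $|u|\ge\eps$ gives that $\tr\bigl(\tfrac{a_n}{h_n\dim V}1_{V^{h_n}_\CC}-\Psi_l(iu)^{-1}\frac{d\Psi_l(iu)}{du}\bigr)\ge 0$ for all $u$ (after enlarging $a_n$ by an $O(h_n)=o(\vol M_n)$ term). The scattering integral thus equals a \emph{nonnegative} integral minus an $O(a_n)$ constant. The nonnegative integral at time $t\ge 1$ is then compared with its value at time $1$ by writing $e^{-\lambda t}=e^{-\lambda}e^{-\lambda(t-1)}$ and pulling out $e^{-c(t-1)}$ from the integrand; the value at time $1$ is, up to $O(a_n+h_n)$ errors, a piece of $\otr_R e^{-\Delta^p[M_n]}$, which is $O(\vol M_n)$ by Theorem \ref{conv1}. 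This simultaneously yields the upper and the lower bound $|\otr_R e^{-t\Delta^p[M_n]}|\le e^{-\lambda_1(t-1)}O(\vol M_n)+e^{-c t}o(\vol M_n)$ for $t\ge 1$, from which \eqref{gtr} follows. The positivity-plus-comparison-with-time-one step is the key idea your argument is missing.
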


Note that the condition on the intertwining operators holds (or not) independantly of the choice of the height functions satisfying condition \eqref{norm_height} (see Remark 3 after the Theorem \ref{conv1}). 

%%%%%%%%%%%%%%%%%%%%%%%%%%%%%%%%%%%%%%%%%%%%%%%%%%%%%%%%%%%%

\subsection{Heat kernels}

For $\phi(u)=e^{-tu}$ the kernel $k_{\phi,p}$ (resp. $K_{\phi,p}^{\Gamma}$) is called the heat kernel of $\HH^3$ (resp. of $M$). We will use the bounds for the heat kernel given by the following result (see for example \cite[Lemma 3.8]{BV}).

\begin{prop} \label{cly}
Let $\rho$ be a finite-dimensional representation of $\SL_2(\CC)$ and $E_\rho$ the associated $\SL_2(\CC)$-equivariant Hermitian bundle on $\HH^3$ (see \ref{systloc}). Let $t_0>0$; there exists a constant $C$ depending only on $t_0$ such that for all $x,y\in X$ and $t\in ]0,t_0[$ we have
\[
|e^{-t\Delta^p[\HH^3]}(x,y)|\le Ct^{-d/2}e^{-\frac{d(x,y)^2}{5t}}.
\]
\end{prop}

We will also make use of the following fact about the heat kernel (see \cite[Theorem 2.30]{Berline_Getzler_Vergne}.

\begin{prop} \label{deart}
There exists $\alpha_k^p\in C^\infty(G,\End(\wedge^p\LP\otimes V))$ such that for all $x\in\HH^3$ we have the asymptotic expansion at $t\to 0$
\[
g^*e^{-t\Delta^p[\HH^3]}(x,gx) = t^{-\frac 3 2} \sum_{k = 0}^{m+1} \alpha_k^p(g) e^{-\frac{d(x,gx)^2}{4t}} t^k + O(t^{m + \frac 1 2}).
\]
Moreover the term  $\alpha_0^p(g)$ equals $g^*\tau_{gx}(x)$ where $\tau_y(x)$ denotes parallel transport from $x$ to $y$ along the unique geodesic arc between them. 
\end{prop}

%%%%%%%%%%%%%%%%%%%%%%%%%%%%%%%%%%%%%%%%%%%%%%%%%%%%%%%%%%%%

\subsection{Asymptotic expansion of the heat kernel at $t\to 0$}

We will need the following result to define the regularised analytic torsion. Note that \cite[Proposition 6.9]{MP} prove a more precise result where all coefficients $b_k^p$ are shown to vanish for odd $k$. 

\begin{prop} \label{dea}
For all $p=0,1,2,3$ and $m\ge 1$ there are coefficients $a_0^p,\ldots,a_m^p,b_0^p,\ldots,b_m^p$ and a function $H^p$ such that 
\begin{equation}
\otr_R(e^{-t\Delta^p[M]}) = t^{-\frac 3 2} \sum_{k=0}^{m+2} a_k^p t^{\frac k 2} + b_0^p t^{-\frac 1 2}\log(t) + t^{-\frac 1 2} \sum_{k=2}^m b_k^pt^{\frac k 2}\log(t) + H^p(t)
\end{equation}
and $H^p(t)\ll t^{\frac{m+1} 2}$ as $t\rightarrow 0$.
\end{prop}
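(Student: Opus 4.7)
The strategy is to apply the geometric expression for the regularized trace from Proposition \ref{geomside} to $\phi(u) = e^{-tu}$, giving the decomposition
\begin{align*}
\otr_R(e^{-t\Delta^p[M]}) &= \otr_\Gamma k_t + \int_D \sum_{\gamma \in \Gamma_\lox} \tr(\gamma^* k_t(x, \gamma x)) dx \\
&\quad + 2\pi h \int_0^\infty r \log r \cdot h(\ell(r)) dr + \sum_{j=1}^h \kappa_j \vol\Lambda_j \int_0^\infty r h(\ell(r)) dr,
\end{align*}
and analyze each summand as $t \to 0$. The identity term equals $\vol(M) \tr k_t(x_0, x_0)$, and applying Theorem \ref{deart} at $g = 1_G$ yields its expansion $\sum_{k=-3}^{m} c_k^p t^{k/2} + O(t^{(m+1)/2})$, which after reindexing $l = k+3$ contributes log-free coefficients to the claimed expansion. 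For the loxodromic sum, the positive lower bound $\inf_{\gamma \in \Gamma_\lox} \ell_\gamma > 0$ on translation lengths of loxodromic elements of a fixed lattice gives $d(x, \gamma x) \ge \ell_0 > 0$; combining the Gaussian estimate of Theorem \ref{cly} with the counting bound \eqref{hypcount} then shows this term is $O(e^{-c/t})$, which is absorbed into $H^p(t)$.

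The two unipotent integrals are the main work. Recall that $h(\ell(r)) = \tr(n^* e^{-t\Delta^p[\HH^3]}(x_0, n x_0))$ for any $n \in N$ with $|n| = r$. I would split each integral at a fixed radius $R > 0$. On $[R, \infty)$ the Gaussian estimate combined with $\ell(r) \gg \log(1 + r)$ from Lemma \ref{eval} produces super-exponential decay in $1/t$, so the tail contributes $O(e^{-c/t})$. On $[0, R]$, substituting the expansion of Theorem \ref{deart} at $g = n(r)$ gives
\[
h(\ell(r)) = \sum_{k=-3}^m \tr(\alpha_k^p(n(r))) e^{-\ell(r)^2/(C_2 t)} t^{k/2} + R_m(r, t)
\]
with $R_m(r, t) \ll t^{(m+1)/2}$ uniformly on $[0, R]$, contributing $O(t^{(m+1)/2})$ to both integrals.

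What remains is to expand the Gaussian-weighted integrals $\int_0^R r F_k(r) e^{-\ell(r)^2/(C_2 t)} dr$ and their analogues with an extra $\log r$ factor, where $F_k(r) = \tr(\alpha_k^p(n(r)))$ is smooth. Changing variables to $u = \ell(r)$, whose inverse $\psi$ is smooth with $\psi(0) = 0$ and $\psi'(0) = 1/2$, the decomposition $\log r = \log u + \log(\psi(u)/u)$ separates the logarithmic singularity from a smooth factor. Taylor expanding the resulting smooth amplitudes to appropriate order and using the closed-form Gaussian moments $\int_0^\infty u^j e^{-u^2/(C_2 t)} du = O(t^{(j+1)/2})$ and $\int_0^\infty u^j \log u \cdot e^{-u^2/(C_2 t)} du = O(t^{(j+1)/2} \log t)$ (extending to $+\infty$ at the cost of exponentially small errors) yields expansions which, multiplied by $t^{k/2}$ and collected by power of $t$, reproduce the log-free $a_k^p t^{(k-3)/2}$ and logarithmic $b_k^p t^{(k-1)/2} \log t$ terms of the proposition. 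The main obstacle is bookkeeping: one must pick the Taylor truncation orders of $F_k$ and of $\log(\psi(u)/u)$ depending on $k$ so that all the remainders --- the error $R_m$ from Theorem \ref{deart}, the Taylor remainders, and the tails from extending the Gaussian integrals to infinity --- combine uniformly into a single $H^p(t) \ll t^{(m+1)/2}$.
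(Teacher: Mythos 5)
Your proposal follows essentially the same route as the paper: decompose via Proposition \ref{geomside}, expand the identity term by evaluating Theorem \ref{deart} at $g=1_G$, kill the loxodromic sum via the Gaussian bound of Theorem \ref{cly} and the counting estimate, and treat the two unipotent integrals by substituting the small-$t$ expansion of $h_t^p$, changing variables to $u=\ell(r)$, and Taylor-expanding against Gaussian moments. Your splitting $\log r = \log u + \log(\psi(u)/u)$ to separate the logarithmic singularity from a genuinely smooth factor is in fact a bit tidier than the paper's direct assertion that $r\log r/(\ell\log\ell)$ is smooth near $0$, but the substance of the argument is identical.
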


\begin{proof}
We fix $p$ and put $h_t^p(\ell)=\tr(n^*e^{-t\Delta^p[M]}(x,nx))$ for a unipotent element $n\in G$ and a $x\in\HH^3$ such that $d(x,nx)=\ell$ (cf. \eqref{defh}). We choose a fundamental domain $D$ for $\Gamma$ and define 
\begin{align*}
  S_1(t)  &= 2\pi h \int_0^{+\infty} r\log(r)h_t^p(\ell(r))dr, \\
  S_2(t)  &= \sum_{j=1}^h \kappa_j\vol(\Lambda_j) \int_0^{+\infty} rh_t^p(\ell(r)) \, dr \\
  S_3(t)  &= \int_D \sum_{\gamma\in\Gamma_\lox} \tr(\gamma^*e^{-t\Delta^p[M]}(x,\gamma x))
\end{align*}
so that by Proposition \ref{geom_side} we have:
\begin{equation} \label{geomsidet}
\otr_R e^{-t\Delta^p[M]} = \otr_{\Gamma}e^{-t\Delta^p[\HH^3]}+S_1(t)+S_2(t)+S_3(t). 
\end{equation} 
Putting $g=1_G$ in Proposition \ref{deart} and integrating over $D$ we get an expansion
\begin{equation} \label{L2exp}
\otr_{\Gamma}e^{-t\Delta^p[\HH^3]} = \vol(D) \sum_{k=-3}^{m} f_k^p t^{\frac k 2} + O(t^{\frac{m+1}2})
\end{equation}
where the $f_k^p$ are absolute coefficients, which takes care of the first summand.

Now we deal with $S_3$; if we put $\ell_0=\sys(\Gamma)$ we get:
\begin{equation} \label{majhypt}
\begin{split}
\sum_{\gamma\in\Gamma_\lox} \tr(\gamma^*e^{-t\Delta^p[M]}(x,\gamma x)) 
     &\le \sum_{\gamma\in\Gamma_\lox} C t^{-\frac 3 2}e^{-\frac{d(x,\gamma x)^2}{5t}} = C \int_{\ell_0}^{+\infty} t^{-\frac 3 2} e^{-\frac{\ell^2}{5t}} d\mathcal{N}_\Gamma(x,\ell) \\
     &= C \int_{\ell_0}^{+\infty} t^{-\frac 5 2}\ell e^{-\frac{\ell^2}{5t}} \mathcal{N}_\Gamma(x,\ell) d\ell \\
     &\ll  t^{-\frac 5 2}e^{-\frac{\ell_0^2}{5t}}
\end{split}
\end{equation} 
so that $S_3(t)$ is actually an $o(t^{m + \frac 1 2})$ for any $m>0$. 

To deal with $S_1$ and $S_2$ we will use the following expansion at $t\to 0$ (which follows immediately from Proposition \ref{deart})
\begin{equation} \label{dah}  
h_t^p(\ell) = t^{-\frac 3 2} \sum_{k=0}^m b_k^p(\ell) e^{-\frac{\ell^2}{4t}} t^k + O(t^{m + \frac 12}) 
\end{equation}
together with an elementary lemma in real analysis. 

\begin{lem}
Let $\omega$ be smooth in a neighbourhood of $[0,1]$. For every integer $m>1$ there are constants $c_l,c_l'$ for $\, l=1,\ldots,m+1$ (depending on $\omega$) such that 
\begin{equation} \label{da}
\int_0^1 r\log(r)\omega(r) e^{-\ell(r)^2/4t} dr-\sum_{l=2}^m t^{l/2}(c_l+c_l' \log t) \le c_{m+1}t^{(m+1)/2} 
\end{equation}
\end{lem}

\begin{proof}
Since $r\mapsto\ell(r)$ is a smooth diffeomorphism of $[0,+\infty[$ and the function $\ell\mapsto r\log(r)/\ell(r)\log(\ell(r))$ is a smooth function near 0, by the change of variable from $r$ to $\ell(r)$ we are reduced to showing that for a smooth function $\omega_0$ on $[0,1]$ there an expansion of the right form as $t\to 0$ of:
\[
\int_0^1 \ell\log(\ell) \omega_0(\ell) e^{-\frac{\ell^2}t} d\ell = \frac {t\log t}2 \int_0^{t^{-1/2}} \ell\omega_0(t^{\frac 1 2}\ell) e^{-\ell^2} d\ell + t \int_0^{t^{-1/2}} \ell\log(\ell)\omega_0(t^{\frac 1 2}\ell) e^{-\ell^2} d\ell
\] 
This is an immediate consequence of Taylor's formula applied to $\omega_0$ at 0 and of the following easy estimate:
\[
\int_0^{t^{-\frac 1 2}} \ell^ke^{-\ell^2}d\ell = \int_0^{+\infty}\ell^ke^{-\ell^2}d\ell+O(t^{-\frac k 2} e^{-\frac 1 t}).
\]
\end{proof}

We get an expansion similar to \eqref{da} (but without the $\log t$ factor) for $\int_0^1 r\omega(r) e^{-\frac{\ell(r)^2}{4t}} dr$ using the same argument. We finally get that for all $m\ge 1$ there are coefficients $c_k^p,d_k^p,e_k^p$ such that we have at $t\to 0$
\begin{equation} \label{prol23}
\begin{split}
\int_0^{+\infty} r\log(r)h_t^p(\ell(r))dr &= \sum_{k=-3}^m c_k^p t^{\frac k 2} + \sum_{k=-1}^m d_k^p t^{\frac k 2}\log t + O(t^{\frac{m+1}2}) ,\\
\int_0^{+\infty} rh_t^p(\ell(r))dr &= \sum_{k=-3}^m e_k^p t^{\frac k 2} + O(t^{\frac{m+1}2})
\end{split}
\end{equation}
and from this, \eqref{geomsidet}, \eqref{L2exp} and \eqref{majhypt}, we get the following expansion for the regularised trace : 
\begin{equation} \label{predea}
\otr_R(e^{-t\Delta^p[M]}) = t^{-\frac 3 2} \sum_{k=0}^{m+2} a_k^p t^{\frac k 2} + t^{-\frac 1 2} \sum_{k=0}^m b_k^pt^{\frac k 2}\log(t) + H^p(t). 
\end{equation}

It remains to prove that the coefficient $b_1^p$ in \eqref{predea} is zero. Looking at the proof of \eqref{da} we see that it comes from the degree-1 monomial in the Taylor expansion when $\ell \to 0$ of the (variable) coefficient $b_0^p(\sqrt t \ell)$ from \eqref{dah}. According to Proposition \ref{deart} we have that $b_0^p(s)$ is equal to $\tr(n_s^*\tau_{n_sx}(x))$. The map $s \mapsto n_s^*$ is the inverse of the parallel transport along the horocycle in $\HH^3$ associated to the unipotent one-parameter group $s \mapsto n_s$. To show the vanishing of $b_1^p$ we thus have to prove that parallel transport is the same up to $O(s^2)$ when done along a geodesic or horospheric arc with the same endpoints. 

Here is an explanation for why this is true. As $s \to 0$ the tangent vectors between the horosphere and the geodesic are $O(s)$-close (in any smooth metric near $x$). Taking a smooth trivialisation of $\Omega^p(M ; V)$ near $x$ the parallel transports are thus solutions of differential equations of the form $\overset{\circ} x(u) = A(u)x(u)$ (for the horosphere) and $\overset{\circ} y(u) = (A(u) + \eps_s(u))y(u)$ (for the geodesic) where $|\eps_s(u)| \le s$ for $0 \le u\le s$. We get using Taylor's formula that for $x(0) = y(0)$ we have :
\[
x(t) - y(t) = (\overset{\circ}x(0) - \overset{\circ}y(0))u + O(u^2) = u\eps_s(0)\cdot x(0) + O(u^2) \ll_{u \le s} s^2
\]
which finishes the proof. 
\end{proof}

%%%%%%%%%%%%%%%%%%%%%%%%%%%%%%%%%%%%%%%%%%%%%%%%%%%%%%%%%%%%

\subsection{Definition of analytic torsions} \label{sec_deftors}

\subsubsection{Regularised torsion}

We fix a nonuniform torsion-free lattice $\Gamma$ in $G$. As usual we also denote by $\Gamma$ Euler's Gamma function defined for $\Real(s)>0$ by the formula $\Gamma(s)=\int_0^{+\infty} e^{-t}t^s\frac{dt}t$ and meromorphically continued to $\CC$. It has a simple pole at each $s=-n$ for $n\in\mathbb{N}$ and no zeroes, so that $1/\Gamma$ is holomorphic on $\CC$. We want to check that 
\begin{equation} \label{defzeta}
\zeta_p(s) := \frac 1{\Gamma(s)} \int_0^{+\infty} \otr_R(e^{-t\Delta^p[M]}) t^s \frac{dt}t 
\end{equation}
defines a holomorphic function on the half-plane $\Real(s)>3/2$ which can be continued to a meromorphic function on $\CC$ which is holomorphic at 0. The large-time convergence of the integral is ensured by the spectral gap for the Laplacian (when there's no spectral gap the integral converges only in a half-plane $\Real(s)<c<0$ and has also to be analytically continued, see \cite{Park} or \cite{MP}) as we now explain: the spectral expansion \eqref{traceexpansion1} applied to the heat kernel yields, for example for $p=1$, the following estimate as $t\to\infty$: 
\[
\otr_R e^{-t\Delta^1[M]} = \otr (e^{-t\Delta^1[M]})_\disc + e^{-t(n_1-n_2)^2} T 
\]
where $T$ is bounded as $t\to+\infty$, whence it follows that $\otr_R e^{-t\Delta[M]} \ll e^{-\lambda_0t}$ where $\lambda_0>0$ is a lower bound for the whole spectrum of all $\Delta^p[M]$ (see Proposition \ref{strongacyclicity}) as $t\to\infty$. Thus we get that for any $t_0>0$ the integral $\int_{t_0}^{+\infty}\otr_R(e^{-t\Delta^p[M]})t^s\frac{dt}t$ converges for all $s\in\CC$. An easy computation moreover yields that
\begin{equation}
\frac d{ds}\left( \frac 1 {\Gamma(s)}\int_{t_0}^{+\infty}\otr_R(e^{-t\Delta^p[M]})t^s\frac{dt}t\right)_{s=0} = \int_{t_0}^{+\infty}\otr_R(e^{-t\Delta^p[M]})\frac{dt}t.
\end{equation}

To deal with the small-time part we use the following classical lemma.

\begin{lem} \label{Gamma}
Let $\phi\in C^0(0,+\infty)$ such that there are integers $m,m'\ge 0$, coefficients $a_k, \, k=-m',\ldots,m$ and a continuous function $H$ so that 
\[
\phi(t) \underset{t\rightarrow 0}= \sum_{k=-m'}^m \left( a_k t^{k/2} + b_k t^{k/2}\log(t) \right) + H(t)
\]
with $b_0 = 0$ and $H(t)\ll t^{\frac{m+1}{2}}$ near 0. Then for all $t_0>0$ the integral $\frac 1{\Gamma(s)}\int_0^{t_0}\phi(t)t^s\frac{dt}t$ converges on the half-plane $\Real(s)>m'/2$ and the holomorphic function it defines may be meromorphically continued to a function on $\Real(s)>1/2$ which is regular at 0. 
\end{lem}

\begin{proof}
For $\alpha\in\CC$ the integral $\int_0^{t_0}t^{\alpha+s-1}dt$ converges absolutely on $\Real(s)>\alpha$ and defines a meromorphic function on $\CC$ with a single simple pole at $s=-\alpha$, and since $1/\Gamma$ has a zero at 0 and the integral $\int_0^{t_0}H(t)t^sdt/t$ converges for $\Real(s)>-m/2$ we get the first part. The formula for the derivative at 0 follows from a staightforward computation. 

The proof for the terms $\int_0^{t_0} t^{\alpha+s-1} \log(t) dt$ is the same except that we get a double pole at $s = \alpha$, thus we need to assume $b_0 = 0$ for the continuation to be holomorphic at $0$ (see also \cite[Lemma 9.35]{Berline_Getzler_Vergne}). 
\end{proof}

It follows from Proposition \ref{dea} and Lemma \ref{Gamma} that we may define the regularised determinant of the Laplacian by 
\begin{equation} \label{defdet}
\det{}_R\: \Delta^p[M]:= \exp(\zeta_p'(0))
\end{equation}
and the analytic torsion by 
\begin{equation}
T_R(M)=\left(\prod_{p=0}^3 \det{}_R(\Delta^p[M])^{(-1)^p p}\right)^{\frac 1 2} = (\det{}_R(\Delta^0[M])^{-3}\det{}_R(\Delta^1[M]))^{\frac 1 2} 
\end{equation}

%%%%%%%%%%%%%%%%%%%%%%%%%%%%%%

\subsubsection{$L^2$-torsion}

The natural candidate to be the limit of finite torsions is the $L^2$-torsion, cf. \cite[Question 13.73]{LuckB}. The following definition does not depend on $t_0>0$:
\begin{equation} \label{defL2}
\begin{split}
\log T^{(2)}(M;V) &=  \frac 1 2  \sum_{p=1}^3 p(-1)^p \biggl(\frac{d}{ds}\biggl(\frac{1}{\Gamma(s)}\int_0^{t_0}\otr_\Gamma(e^{-t\Delta^p[\HH^3]})t^{s-1}dt\biggr)_{s=0} \\
                 &\phantom{\frac 1 2  \sum_{p=1}^3 p(-1)^p \biggl(} +\int_{t_0}^{+\infty}\otr_\Gamma(e^{-t\Delta^p[\HH^3]})\frac{dt}{t}\biggr).
\end{split}
\end{equation}
The convergence of the fist integral follows from the asymptotic expansion \eqref{L2exp}; the large-time convergence is obvious here because the Laplacian on $\HH^3$ with coefficients in $V$ has a spectral gap. We see that $\log T^{(2)}$ is a multiple of $\vol(D)$ and we denote by $t^{(2)}(V)$ the constant $\log(T^{(2)}(M,V))/\vol(M)$. This has been computed in all generality in \cite{BV} to yield \eqref{formuleL2}.

%%%%%%%%%%%%%%%%%%%%%%%%%%%%%%%%%%%%%%%%%%%%%%%%%%%%%%%%%%%%

\subsection{Proof of Theorem \ref{Main1}}

\subsubsection{Plan of proof}

We naturally study small and large times separately. We want first to prove that for any $t_0>0$ the following limit holds:
\begin{equation} \label{pt} 
\frac 1{\vol(M_n)} \frac d{ds}\left(\int_0^{t_0} \otr_R \left( e^{-t\Delta^p[M_n]})-\otr_{\Gamma_n}(e^{-t\Delta^p[\HH^3]}) \right) t^s\frac{dt}t\right)_{s=0} \xrightarrow[n\to\infty]{} 0
\end{equation}
The proof of this is more involved than that of the pointwise convergence of the traces  since we have to control the asymptotics as $t\to 0$ of the heat kernels of $M_n$ as $n\to\infty$. We carry it out in \ref{sec:small_time_approx} by going to go over the steps of the proof of Theorem \ref{conv1} with extra care for the dependency in $t$ of the majorations. 

We also have to deal with the convergence of the large-time integral as $n$ varies. What we need are the following limits, which we will prove right away in \ref{gtps} below.
\begin{gather}
\lim_{t_0\to+\infty} \left( \limsup_{n\to\infty} \int_{t_0}^{+\infty} \frac{\otr_R(e^{-t\Delta^p[M_n]})}{\vol(M_n)}\frac {dt}t \right) = 0, 
\label{gtr}\\
\lim_{t_0\rightarrow\infty}\left(  \int_{t_0}^{+\infty}\otr_{\Gamma}(e^{-t\Delta^p[\HH^3]})\frac {dt}t \right) = 0.
\label{gtl2}
\end{gather}

Assuming all these limits we can now conclude the proof of Theorem \ref{Main1}: the limit \eqref{pt} above yields for all $t_0>0$
\[
\limsup_{n\to\infty}\frac{\log T_R(M_n)-\log T^{(2)}(M_n)}{\vol(M_n)} 
   \le \limsup_{n\to\infty}\left( \int_{t_0}^{+\infty} \frac{\otr_R(e^{-t\Delta^p[M_n]})}{\vol(M_n)} \frac{dt}t \right) + \int_{t_0}^{+\infty}\otr_{\Gamma}(e^{-t\Delta^p[\HH^3]})\frac {dt}t
\]
and by \eqref{gtr},\eqref{gtl2} we get that the right-hand side goes to 0 as $t_0\to+\infty$, so that the limit superior on the left must be 0.

%%%%%%%%%%%%%%%%%%%%%%%%%%%%%%

\subsubsection{Spectral gap and large times} \label{gtps}

Obviously \eqref{gtl2} follows from the convergence of the integral. Now we prove \eqref{gtr} using the uniform spectral gap. Let us first deal with the continuous part. For $u\in[\eps,+\infty)$ the Maass-Selberg relations \eqref{MS0} for $Y=1$ yield 
\[
- \langle \Psi_l(iu)^{-1}\frac{d\Psi_l(iu)}{du} \cdot v, v\rangle_{V_\CC^h} 
      = |T^1E(iu,v)|_{L^2(M_n;V_\CC)}^2 +\frac 1{iu} (\langle \Psi_l(iu) \cdot v, v\rangle_{V_\CC^h} -\langle \Psi_l(-iu) \cdot v, v\rangle_{V_\CC^h}). 
\]
As $\Psi_l(iu)$ is unitary the right-hand side is bounded below by $-2\eps^{-1}$ and it follows that $2\eps^{-1}1_{V_\CC^h}-\Psi_l(iu)^{-1}\frac{d\Psi_l(iu)}{du}$ is positive when $|u|\ge\eps$; in particular,
\[
\xi(u) := \tr\left( 2\eps^{-1} 1_{V_\CC^{h_n}}  - \Psi_l(iu)^{-1}\frac{d\Psi_l(iu)}{du}\right) \ge 0
\]
and since for $t\ge 1$ we have $e^{-tu^2}\le e^{-u^2}$ we get:
\begin{equation} \label{computation_above}
\begin{split}
&  -\int_{|u|\ge\eps} e^{-u^2t}\tr\left(\Psi_l(iu)^{-1}\frac{d\Psi_l(iu)}{du}\right) du \\
&\qquad\qquad = -2\eps^{-1}h_n\dim(V)\int_{|u|\ge \eps}e^{-u^2t} du + \int_{|u|\ge\eps} \xi(u)e^{-u^2t}du \\
&\qquad\qquad \le -2\eps^{-1}h_n\dim(V)\int_{|u|\ge \eps}e^{-u^2t} du + \int_{|u|\ge\eps} \xi(u)e^{-u^2}du \\
&\qquad\qquad = \left(2\eps^{-1} \dim(V) \int_{|u|\ge \eps} e^{-u^2} du\right) h_n - \int_{|u\ge \eps} e^{-u^2}\tr\left(\Psi_l(iu)^{-1}\frac{d\Psi_l(iu)}{du}\right) du. 
\end{split}
\end{equation}
We put 
\[
C = \left(2\eps^{-1} \dim(V) \int_{|u|\ge \eps}e^{-u^2}du\right),
\]
recall from \eqref{Caseig} that $l^2 + \lambda_V \ge (n_1 - n_2)^2$ for all $l$ under consideration here, so that $e^{-(l^2 + \lambda_V)t} \le e^{-(n_1 - n_2)^2(t - 1)}e^{-(l^2 + \lambda_V)}$; from the last line of \eqref{computation_above} we get:
\begin{multline} \label{large_u}
-\int_{|u|\ge\eps} e^{-(u^2 + l^2 + \lambda_V)t} \tr\left(\Psi_l(iu)^{-1}\frac{d\Psi_l(iu)}{du}\right) du \\
       \le Ch_ne^{-(n_1-n_2)^2t} - e^{-(n_1-n_2)^2(t-1)} \int_{|u\ge \eps} e^{-(u^2 + l^2 + \lambda_V)}\tr\left(\Psi_l(iu)^{-1}\frac{d\Psi_l(iu)}{du}\right) du . 
\end{multline}
Since $a_n\ge \tr\Psi_l(iu)^{-1}\frac{d\Psi_l(iu)}{du}$ we also have
$$
\tr\left( \frac{a_n}{h_n\dim V} 1_{V_\CC^{h_n}}  - \Psi_l(iu)^{-1}\frac{d\Psi_l(iu)}{du}\right) \ge 0
$$
and we obtain in the same way
\begin{equation} \label{small_u}
-\int_{|u|\le\eps} e^{-u^2t}\tr\left(\Psi_l(iu)^{-1}\frac{d\Psi_l(iu)}{du}\right) du \le Ca_n - \int_{|u\le \eps} e^{-u^2}\tr\left(\Psi_l(iu)^{-1}\frac{d\Psi_l(iu)}{du}\right) du 
\end{equation}
Since $\tr\Psi_l(0)\le d_lh_n$ for all $l$, we also have 
\begin{equation} \label{psi(0)}
\sum_l e^{-t(l^2 + \lambda_V)}\tr\Psi_l(0) \le \dim(V)h_n + e^{-(t-1)(n_1-n_2)^2} \sum_l e^{-(l^2 + \lambda_V)}\tr\Psi_l(0). 
\end{equation}
Now let $\lambda_0$ be the lower bound on the spectra of all $(\Delta^p[M_n])_\disc$ given by Proposition \ref{strongacyclicity}; we may suppose $\lambda_0\le(n_1-n_2)^2$. We have $\otr(e^{-t\Delta^p[M_n]})_\disc \le e^{-\lambda_0(t-1)}\otr(e^{-\Delta^p[M_n]})_\disc$ and since $\otr_R e^{-t\Delta^p[M_n]}$ is the sum of this with the terms on the right-hand side of \eqref{large_u},\eqref{small_u} and \eqref{psi(0)} we finally obtain:
\[
\otr_R e^{-t\Delta^p[M_n]} \le e^{-\lambda_0(t-1)}\otr_R e^{-\Delta[M_n]} + O(e^{-t(n_1-n_2)^2}(a_n + h_n))
\]
from which follows:
\[
\sup_n\left(\frac 1{\vol(M_n)} \int_{t_0}^{+\infty} \otr_R(e^{-t\Delta^p[M_n]})\frac{dt}t\right) \le \int_{t_0}^{+\infty} e^{-\lambda_0 (t-1)}\frac{dt}t \cdot \sup_n\frac{\otr_R e^{-\Delta^p[M_n]}}{\vol(M_n)} + \frac{O(a_n + h_n)}{\vol(M_n)}. 
\]
By Theorem \ref{conv1}, Lemma \ref{nbcuspsBS} and the hypothesis on $a_n$ we have the right-hand side above is bounded in $n$ and goes uniformly to 0 as $t_0\to\infty$. 

%%%%%%%%%%%%%%%%%%%%%%%%%%%%%%

\subsubsection{Small times} \label{sec:small_time_approx}

To deal with the small-time part we analyse each of the terms in \eqref{geomsidet}. Recall that for $j=1,\ldots,h_n$ we have put $\Lambda_{n, j}=(\Gamma_n)_{P_j}$ and 
\[
\kappa_{n, j} = \int_{\alpha_j^n}^{+\infty} E_{\Lambda_{n, j}}(\rho)\frac{d\rho}{\rho^3} - \frac{\pi(1+2\log\alpha(\Lambda_{n,j}))}{\vol(\Lambda_{n, j})}.
\]
Then we get that
\begin{align*}
\otr_Re^{-t\Delta^p[M_n]}-\otr_{\Gamma_n} e^{-t\Delta^p[\HH^3]} 
   &= \int_{D_n} \sum_{\gamma\in(\Gamma_n)_\lox}\tr\gamma^*e^{-t\Delta^p[\HH^3]}(x,\gamma x) dx \\ 
 &\quad + 2\pi h_n \int_0^{+\infty} r\log(r)h_t^p(\ell(r)) dr + \sum_{j=1}^{h_n}\kappa_{n,j} \vol(\Lambda_{n,j}) \int_0^{+\infty}rh_t^p(\ell(r)) dr \\
 &=: T_1+T_2.
\end{align*}

By the estimates from Theorem \ref{cly} we have 
\begin{align*}
\frac d{ds}\left(\int_0^{t_0} T_1 t^s\frac{dt}t\right)_{s=0} 
              &\ll \int_0^{t_0} \int_{D_n} \sum_{\gamma\in(\Gamma_n)_\lox}e^{-\frac{d(x,\gamma x)^2}{C_2t}}dx t^{-\frac 5 2}\frac{dt}t \\
              &= \int_{M_n}\sum_{\gamma\in(\Gamma_n)_\lox} \left( \int_0^{t_0} e^{-\frac{d(x,\gamma x)^2}{C_2t}} t^{-\frac 5 2}\frac{dt}t \right) dx
\end{align*}
and the right-hand side is an $o(\vol M_n)$ by Lemma \ref{limhyp}. 

Now we deal with $T_2$; put:
\begin{equation} \label{xitheta}
\begin{split}
\Xi(s)    &= \int_0^{t_0}\int_0^{+\infty} r\log(r)h_t^p(\ell(r)) dr t^s\frac{dt}t , \\
\Theta(s) &= \int_0^{t_0} \int_0^{+\infty}rh_t^p(\ell(r)) dr t^s \frac{dt}t.
\end{split}
\end{equation}
It follows from \eqref{prol23} and Lemma \ref{Gamma} that $\Xi,\Theta$ extend to meromorphic functions on $\CC$ which are holomorphic at 0 and we get
\[
\frac d{ds} \left(\int_0^{t_0} T_2 t^s\frac{dt}t\right)_{s=0} = 2\pi h_n\frac{d\Xi}{ds}(0) + \left(\sum_{j=1}^{h_n}\kappa_{n,j}\vol(\Lambda_{n,j})\right) \frac{d\Theta}{ds}(0).
\]
On the other hand we have seen that $\sum_{j=1}^{h_n}\kappa_{n,j}\vol(\Lambda_{n,j}) = o(\vol M_n)$ in the proof of Theorem \ref{conv1}, and $h_n=o(\vol M_n)$ by Lemma \ref{nbcuspsBS}, so that the right-hand side itself is $o(\vol M_n)$, which concludes the proof.

%%%%%%%%%%%%%%%%%%%%%%%%%%%%%%%%%%%%%%%%%%%%%%%%%%%%%%%%%%%%%%%%%%%%%%%%%%%%%%%%

\section{The asymptotic Cheeger--M\"uller equality: statement}

\label{CMA_stat}
In this section we recall the definition of absolute analytic torsion for manifolds with boundary and we give the scheme of proof for the following theorem. The actual work is done in Sections \ref{CMA_small} and \ref{CMA_large} below. 

\begin{theo} \label{Main2}
Let $V$ be a strongly acyclic representation of $G$ and $M_n=\Gamma_n\backslash\HH^3$ a sequence of finite-volume hyperbolic three--manifolds satisfying the conditions of Theorem \ref{Main1}, with \eqref{square!} replaced by the stronger condition that we have
\begin{equation} \label{square2}
\sum_{j=1}^{h_n} \left(\frac{\alpha_1(\Lambda_{j,n})}{\alpha_2(\Lambda_{j,n})}\right)^2 \ll \frac{\vol(M_n)}{(\log(\vol M_n))^{20}}. 
\end{equation}
Then for 
\[
Y_j^n = \left( \frac{\vol(M_n)}{\sum_{j=1}^{h_n} \frac{\alpha_2(\Lambda_{n,j})^2}{\alpha_1(\Lambda_{n,j})^2}} \right)^{\frac 1{10}} \cdot \alpha_1(\Lambda_{n,j}).
\]
the following limit holds.
\begin{equation}
\frac{\log T_R(M_n;V) - \log T_\abs(M_n^{Y^n};V)}{\vol(M_n)} \xrightarrow[n\to\infty]{} 0.
\end{equation}
\end{theo}

Once we accept this result we can deduce an asymptotic equality between regularised torsion and a \emph{combinatorial} absolute torsion that we will define in \eqref{deftau} below. The other important ingredient for this is the generalisation by Br\"uning and Ma \cite{Bruening_Ma1} of the Cheeger--M\"uller equality to the case of flat bundles on manifolds with boundary.

\begin{theo} \label{CMA}
Notations as above, we have the limit
\begin{equation}
\frac{\log T_R(M_n;V) - \log \tau_\abs(M_n^{Y^n};V)}{\vol(M_n)} \xrightarrow[n\to\infty]{} 0.
\end{equation}
\end{theo}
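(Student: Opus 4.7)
The plan is to deduce Theorem \ref{CMA} from Theorem \ref{Main2} by invoking the Cheeger--M\"uller equality for manifolds with boundary: it suffices to show that
\begin{equation*}
\log T_\abs(M_n^{Y^n};V) - \log \tau_\abs(M_n^{Y^n};V_\ZZ) = o(\vol M_n).
\end{equation*}
Here the analytic torsion $T_\abs$ and the combinatorial torsion $\tau_\abs$ are both attached to the compact manifold with boundary $M_n^{Y^n}$ with absolute boundary conditions, so a true (non-asymptotic) Cheeger--M\"uller type identity is available in the form proved by J.~Br\"uning and X.~Ma. This result, applied to our setting, expresses the difference $\log T_\abs(M_n^{Y^n};V) - \log\tau_\abs(M_n^{Y^n};V_\ZZ)$ as an explicit local boundary anomaly plus a bundle anomaly: the former is an integral over $\partial M_n^{Y^n}$ of a form built from the Euler form of the boundary and its second fundamental form (paired with characters of $V$), the latter is an integral involving the fiberwise metric and its variation across the collar.

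The next step is then purely geometric: one checks that these anomaly integrals are sublinear in $\vol M_n$. Every component $T_{n,j}$ of $\partial M_n^{Y^n}$ is a flat two-torus sitting in the warped product end, so its intrinsic curvature form vanishes and its second fundamental form is the one of a horocycle torus in $\HH^3$, which up to constants is the identity endomorphism of the tangent bundle. Consequently the Br\"uning--Ma integrand reduces, on each $T_{n,j}$, to a density that is universal in $V$ and constant along $T_{n,j}$, multiplied by $\vol(T_{n,j})$ at height $Y_j^n$. With our choice of height functions normalized so that $\{y_j\geq 1\}$ embed in $M_n$, each such $\vol(T_{n,j})$ is uniformly bounded (it equals $\vol\Lambda_{n,j}$ divided by $(Y_j^n)^2$, and one arranges $Y_j^n$ so that these are controlled). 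Thus each boundary component contributes $O(1)$ to the anomaly and the total contribution is $O(h_n)$.

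One then concludes using Lemma \ref{nbcuspsBS}: under Benjamini--Schramm convergence one has $h_n = o(\vol M_n)$, so the total anomaly is $o(\vol M_n)$. Combined with Theorem \ref{Main2} this yields the desired limit. The main technical point to be verified carefully is the \emph{explicit} shape of the Br\"uning--Ma anomaly in the warped-product geometry of the cusp neighborhoods in order to see that the integrand really is bounded independently of $n$ on each boundary torus; this is the only step where the precise choice of $Y^n$ enters the estimate, and it is where one uses that the height functions come from those on the base $M$ together with the cusp-uniformity hypothesis (or the weaker condition \eqref{square!}), which prevents the geometry of $T_{n,j}$ from degenerating in a way that would spoil the boundedness of the local anomaly density.
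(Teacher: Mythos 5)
Your overall plan is the same as the paper's: invoke Theorem~\ref{Main2} to replace $\log T_R(M_n;V)$ by $\log T_\abs(M_n^{Y^n};V)$, and then show that the boundary anomaly $\log T_\abs(M_n^{Y^n};V) - \log\tau_\abs(M_n^{Y^n};V)$ is $o(\vol M_n)$ via a Cheeger--M\"uller theorem with boundary. The technical route you choose is a mild variant: you want to read off the anomaly density directly from Br\"uning and Ma's local formula for a non-product metric, whereas the paper deforms the hyperbolic metric to a product metric in a collar via a family $g_u$, uses the anomaly formula for the $u$-derivative (which, because the germ of $g_u$ at the boundary is universal, is a $u$-dependent constant times $\vol(\partial M_n^Y)$), and then applies Br\"uning--Ma only in the product case, where the discrepancy is $\tfrac{\log 2}{2}\dim V\cdot\chi(\partial X)=0$ since the boundary is a union of tori. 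Both routes lead to the same estimate: the total anomaly is bounded by a constant (depending only on $V$ and the fixed warping profile) times $\vol(\partial M_n^{Y^n})$, and the proof then hinges on showing $\vol(\partial M_n^{Y^n})=o(\vol M_n)$.

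The one place where your sketch goes astray is the claim that each boundary torus $T_{n,j}$ has $\vol(T_{n,j})=\vol\Lambda_{n,j}/(Y_j^n)^2$ uniformly bounded, and hence the anomaly is $O(h_n)$. Writing $Y_j^n=\alpha_1(\Lambda_{n,j})a_n$ as in \eqref{defY}, Minkowski's theorem gives $\vol(T_{n,j})\asymp a_n^{-2}\,\alpha_2(\Lambda_{n,j})/\alpha_1(\Lambda_{n,j})$. Under cusp-uniformity the ratio $\alpha_2/\alpha_1$ is bounded and the per-cusp boundedness holds; but under the weaker hypothesis~\eqref{square!}, which Theorems~\ref{Main2} and~\ref{CMA} are stated to cover, an individual ratio $\alpha_2(\Lambda_{n,j})/\alpha_1(\Lambda_{n,j})$ can grow like $\sqrt{\vol M_n}$, while $a_n$ can grow arbitrarily slowly, so individual $\vol(T_{n,j})$ need not stay bounded. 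The correct and sharper step is the aggregate bound: summing over cusps,
\[
\vol(\partial M_n^{Y^n}) \asymp a_n^{-2}\sum_{j=1}^{h_n}\frac{\alpha_2(\Lambda_{n,j})}{\alpha_1(\Lambda_{n,j})},
\]
and $\sum_j\alpha_2(\Lambda_{n,j})/\alpha_1(\Lambda_{n,j})=o(\vol M_n)$ is exactly~\eqref{sumcusp}, a consequence of Benjamini--Schramm convergence by Lemma~\ref{hypBS}. Equivalently, as the paper estimates, $\vol(\partial M_n^{Y^n})\le (\min_j Y_j^n)^{-2}\vol M_n = o(\vol M_n)$. So the $O(h_n)$ intermediate step should be replaced by the direct observation that the total boundary area is sublinear in volume; this is both necessary for the full generality of the statement and cleaner even in the cusp-uniform case.
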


%%%%%%%%%%%%%%%%%%%%%%%%%%%%%%%%%%%%%%%%%%%%%%%%%%%%%%%%%%%%

\subsection{Absolute torsions} \label{sec_abs}

\subsubsection{Analytic torsion}

Let $X$ be a compact Riemannian manifold with boundary and $V$ a real flat vector bundle on $X$ with a Euclidean metric. Then the space $\Omega^p(X;V)$ of smooth $p$-forms on $X$ with coefficients in $V$ is  operated upon by the Hodge Laplacian $\Delta^p[X]$. The restriction of $\Delta^p[X]$ to the forms satisfying absolute conditions on the boundary (i.e. the boundary restrictions of  $*f$ and $*df$ are zero, where $*$ is the Hodge star) admits an essentially autoadjoint extension $\Delta_\abs^p$ to the space $L^2\Omega^p(X;V)$ of square-integrable $p$-forms. We thus may form the associated heat kernel $e^{-t\Delta_\abs^p[X]}$ which is the convolution by a smooth kernel $e^{-t\Delta_\abs^p[X]}(.,.)$, is trace-class and has an asymptotic expansion $\otr(e^{-t\Delta_\abs^p[X]})=a_3t^{-\frac 3 2}+...+a_0+O(t^{\frac 1 2})$ as $t\to 0$ (cf. \cite[Theorem 1.11.4]{Gilkey}). On the other hand the spectrum of $\Delta_\abs^p[X]$ is discrete and thus we have an estimate 
\[
\otr(e^{-t\Delta_\abs^p[X]})-\dim\ker(\Delta_\abs^p[X]) \ll e^{-\lambda_1t}
\]
where $\lambda_1$ is its smallest positive eigenvalue. Thus the zeta function
\[
\zeta_{p,\abs}(s):=\frac 1{\Gamma(s)}\int_0^{+\infty} (\otr(e^{-t\Delta_\abs^p[X]})-\dim\ker(\Delta_\abs^p[X])) t^s\frac{dt}t
\]
is well-defined for $\Real(s)>3/2$ and may be extended to a meromorphic function on $\CC$ which is holomorphic at 0. One then defines $\det(\Delta_\abs^p[X])=\exp(\zeta_{p,\abs}'(0))$ and 
\[
T_\abs(X;V)=\left(\prod_{p=1}^{\dim X} \det(\Delta_\abs^p[X])^{(-1)^p p}\right)^{\frac 1 2}.
\]

%%%%%%%%%%%%%%%%%%%%%%%%%%%%%%

\subsubsection{Reidemeister torsion}

We will use a definition of Reidemeister torsion derived from that given in \cite{Bruening_Ma1}. In this reference the authors define two norms on the determinant line 
\[
D=\otimes_{p=0}^{\dim X}\wedge^{b_p}H^p(X;V)
\] 
(where $b_p=\dim H^p(X;V)$): one, that we will denote by $\|\cdot\|_{L^2}$, which is induced by the $L^2$-norms obtained by identifying $H^p(X;V)=\ker\Delta_\abs^p[X]$ and another, $\|\cdot\|_{\rm comb}$, obtained from a smooth triangulation of $X$. The Reidemeister torsion is then the positive real number defined by 
\[
\tau_\abs(X;V) = \frac{\|\cdot\|_{\rm comb}}{\|\cdot\|_{L^2}}
\]
which does not depend on the triangulation. In \cite[(0.11)]{Bruening_Ma1} the authors compute the difference $\log\tau_\abs-\log T_\abs$ in terms of the geometry of the boundary; we will only use the special case of their result (cf. (0.14 in loc. cit.) which states that 
\begin{equation} \label{CMbd}
\log\tau_\abs(X;V) - \log T_\abs(X;V) = \frac{\log(2)}2 \dim(V) \chi(\pl X)
\end{equation}
when the metric is a product near the boundary. 

Now suppose that $\pi_1(X)$ preserves a lattice $V_\ZZ$ in $V$; we can then define the integral homology $H_*(X;V_\ZZ)$ and we have a decomposition $H_p(X;V_\ZZ)=H_p(X;V_\ZZ)_\free\oplus H_p(X;V_\ZZ)_\tors$. The free part $H_p(X;V_\ZZ)_\free$ is a lattice in $\ker(\Delta_\abs^p[X])$ so we can define its covolume $\vol\left(H_p(X;V_\ZZ)_\free\right)$. We then have, more or less tautologically (see \cite[Section 1]{Cheeger}):
\begin{equation} \label{deftau}
\tau_\abs(X;V) = \prod_{p=0}^{\dim X}\left(\frac{|H_p(X;V_\ZZ)_\tors|}{\vol\left(H_p(X;V_\ZZ)_\free\right)}\right)^{(-1)^p}
\end{equation}
by evaluating the norms on a basis of $D$ coming from bases of the free $\ZZ$-module $H^p(X;V_\ZZ)_\free$. 

%%%%%%%%%%%%%%%%%%%%%%%%%%%%%%%%%%%%%%%%%%%%%%%%%%%%%%%%%%%%

\subsection{Comparing analytic torsions} \label{scheme}

We give here the proof of Theorem \ref{Main2} assuming the content of Sections \ref{CMA_small} and \ref{CMA_large} (note that the condition \eqref{square2} is not used until Section \ref{CMA_large}). We have 
\begin{align*}
&\log T_R(M_n;V) - \log T_\abs(M_n^{Y^n};V) \\
&\hspace{3cm} = \sum_{p=1}^3 p(-1)^p \frac d{ds}\left( \frac 1{\Gamma(s)} \int_0^{t_0}(\otr_R e^{-t\Delta^p[M_n]} - \otr e^{-t\Delta_\abs^p[M_n^{Y^n}]}) t^s\frac{dt}t\right)_{s=0} \\
                &\hspace{3cm} \phantom{=\sum_{p=1}^3 p(-1)^p} + \int_{t_0}^{+\infty} \otr_R e^{-t\Delta^p[M_n]} \frac{dt}t - \int_{t_0}^{+\infty} \otr e^{-t\Delta_\abs^p[M_n^{Y^n}]} \frac{dt}t \\
                &\hspace{3cm} \phantom{=\sum_{p=1}^3 p(-1)^p} + \frac d{ds}\left( \frac 1{\Gamma(s)} \int_0^{t_0} b_p(M_n;V) t^s\frac{dt}t\right)_{s=0}. 
\end{align*}
The first line is an $o(\vol M_n)$ for any $t_0>0$ according to Proposition \ref{stc}, the limit superior of the second one goes to 0 as $t_0\to+\infty$ according to Proposition \ref{terme3} and \eqref{gtr}. The third lines equals $h_n$ times a constant and thus it is also negligible before $\vol(M_n)$. Thus the right-hand side is an $o(\vol M_n)$. 

%%%%%%%%%%%%%%%%%%%%%%%%%%%%%%%%%%%%%%%%%%%%%%%%%%%%%%%%%%%%

\subsection{Applying Br\"uning and Ma's result} \label{apply}

We now give the proof of Theorem \ref{CMA}. For a finite-volume hyperbolic manifold $M$ let $g_0$ be the hyperbolic metric on $M^Y$ and $g_1$ a Riemannian metric on $M^Y$ which equals $g_0$ on $M^{Y/3}$ and is a product on a neighbourhood of the boundary, for example we can take (in coordinates $(z,y)$ in a cusp):
\[
g_1(z,y) = \left(\psi(\log(Y/y))Y^{-2}+(1-\psi(\log(Y/y)))y^{-2}\right)(dz^2 + dy^2)
\]
where $\psi$ is a smooth function which is zero on  $[1,+\infty)$ and constant equal to 1 near zero. we put $g_u=ug_1+(1-u)g_0$ which is a smooth family of Riemannian metrics on $M^Y$. The following result is well-known (see also \cite[Section 4]{Bruening_Ma2} which gives an exact formula for the error term). 

\begin{lem}
There exists smooth functions $c_p(u)$ depending only on $\psi$ such that 
we have, for all $M$ and $Y\in[1,+\infty)^h$:
\begin{equation*}
\frac{d}{du}\left(\log T_\abs(M^Y,g_u)-\log\tau_\abs(M^Y,g_u)\right)=\vol(\partial M^Y)\sum_{p=0}^3p(-1)^p c_p(u).
\end{equation*}
\end{lem}

\begin{proof}
This follows at once from \cite[Theorem 2.22]{Muller} (see also \cite[Theorem 3.27]{Cheeger}) since the isometry class of the germ of $g_u$ on the boundary $\partial M_n^Y$ does not depend on $n$ or $Y$.
\end{proof}

On the other hand, by \cite[Theorem 0.1]{Bruening_Ma1} (see \eqref{CMbd}) we get that 
$T_\abs(M^Y,g_1)=\tau_\abs(M^Y,g_1)$ so that
\begin{equation}
\frac{\log T_\abs(M^Y)-\log\tau_\abs(M^Y)}{\vol(M)}
             = \frac{\vol(\partial M^Y)}{\vol(M)} \int_0^1\sum_{p=1}^3(-1)^p p c_p(u)du. 
\label{equal}
\end{equation}
Now we apply this to the heights $Y^n$; we have: 
\begin{align*}
\vol(\partial M_n^{Y^n}) &\le \sum_{j=1}^{h_n} (Y_j^n)^{-2} \alpha_1(\Lambda_{n,j})\alpha_2(\Lambda_{n,j}) \\
            &= \sum_{j=1}^{h_n} \frac{\alpha_1(\Lambda_{n,j})}{(Y_j^n)^2} \cdot \frac{\alpha_2(\Lambda_{n,j})}{\alpha_1(\Lambda_{n,j})} \le \max_{j=1, \ldots, h_n}\left(\frac{\alpha_1(\Lambda_{n,j})}{(Y_j^n)^2} \right) \cdot \sum_{j=1}^{h_n} \frac{\alpha_2(\Lambda_{n,j})}{\alpha_1(\Lambda_{n,j})}
\end{align*}
and the right-hand side is an $o(\vol M_n)$ since on the one hand we have $\sum_{j=1}^{h_n} \frac{\alpha_2(\Lambda_{n,j})}{\alpha_1(\Lambda_{n,j})} = o(\vol M_n)$ by the hypothesis that \eqref{square!} holds and on the other for the sequence $Y^n$ from Proposition \ref{mainmajo} we have $\max_{j=1, \ldots, h_n} (\alpha_1(\Lambda_{n,j}) / (Y_j^n)^2) \to 0$ as $n \to +\infty$. Thus it follows from \eqref{equal} that 
\begin{equation*}
\frac{\log T_\abs(M_n^{Y^n};V)-\log\tau_\abs(M_n^{Y^n};V)}{\vol(M_n)}
      \xrightarrow[n\to\infty]{}0
\end{equation*}
which finishes the proof of Theorem \ref{CMA}

%%%%%%%%%%%%%%%%%%%%%%%%%%%%%%%%%%%%%%%%%%%%%%%%%%%%%%%%%%%%%%%%%%%%%%%%%%%%%%%%

\section{The asymptotic Cheeger--M\"uller equality: the small-time part}

\label{CMA_small}
The aim of this section is to prove the following result, which is an immediate consequence of Propositions \ref{terme2} and \ref{comp} below. 

\begin{prop}
For the sequence $Y^n$ defined in Theorem \ref{Main2} we have for any $t_0>0$ the limit
$$
\lim_{n\to\infty} \frac{\frac d{ds}\left(\frac 1{\Gamma(s)} \int_0^{t_0}(\otr_R e^{-t\Delta^p[M_n]} - \otr e^{-t\Delta_\abs^p[M_n^{Y^n}]}) t^s\frac{dt}t\right)_{s=0}}{\vol(M_n)} = 0.
$$
\label{stc}
\end{prop}

\subsection{Heat kernels on truncated hyperbolic manifolds}

Let $M=\Gamma\bs\HH^3$ be a complete hyperbolic manifold with cusps $y_1,\ldots,y_h$ a set of $\Gamma$-invariant height functions. Then for $Y\in[1,+\infty[^h$ the set
\[
\widetilde{M^Y}=\{ x\in\HH^3,\, \forall j=1,\ldots,h:\, y_j(x)\le Y_j\} 
\]
is the universal cover of $M^Y$.  The following generalisation of Proposition \ref{cly} to this context will be proved in Appendix \ref{heat} (as in Section \ref{hyp_mfd} we use $\eps$ to denote the Margulis constant of $\HH^3$). 

\begin{prop} \label{cly_bd}
Let $M$ be a hyperbolic manifold with $h$ cusps and $Y\in[1, +\infty[^h$ such that for all peripheral subgroups $\Lambda$ of $\pi_1(M)$ there is a vector in $\Lambda$ which has a displacement less than $\eps/10$ on the relevant horosphere at height $Y$. Then for all $t_0>0$ there is a $C>0$ such that for all $t\in]0,t_0]$ we have that
\[
\left|e^{-t\Delta_\abs^p[\wdt{M^Y}]}(x,y) \right| \le Ct^{-\frac 3 2} e^{-\frac{d(x,y)^2}{5t}}. 
\]
\end{prop}

This implies that the series in the following expansion for the heat kernel converges uniformly on compact sets of $\wdt{M^Y}$: 
\begin{equation} \label{dab}
e^{-t\Delta_\abs^p[M^Y]}(x,y) = \sum_{\gamma\in\Gamma} \gamma^* e^{-t\Delta_\abs^p[\wdt{M^Y}]}(x,\gamma y).
\end{equation}

%%%%%%%%%%%%%%%%%%%%%%%%%%%%%%%%%%%%%%%%%%%%%%%%%%%%%%%%%%%%

\subsection{A useful estimate} 

The following proposition is the starting point for the proof of Theorem \ref{Main2}; note that the fact that $\min_{j=1,\ldots,h_n}(Y_j^n / \alpha_1(\Lambda_{n,j}))\xrightarrow[n\to\infty]{} + \infty$ implies that for $n$ large enough the heights $Y^n$ satisfy the assumption of Proposition \ref{cly_bd}. 

\begin{prop} \label{mainmajo}
Let $M_n$ be as in the statement of Theorem \ref{Main2} and put:
\[
S(n,t,Y) = \int_{M_n^Y}\sum_{\gamma\in\Gamma_n,\, \gamma\not= 1} e^{-\frac{d(x,\gamma x)^2}{5t}} dx.
\]
Then for all $t_0>0$ the function $\Omega$ defined by $\Omega(s) = \int_0^{t_0} S(n,t,Y) t^{s-\frac 3 2} \frac{dt}t$ is holomorphic on $\CC$ and there is a sequence $Y^n\in[1,+\infty[^{h_n}$ such that 
\[
\min_{j=1,\ldots,h_n}(Y_j^n / \alpha_1(\Lambda_{n,j}))\xrightarrow[n\to\infty]{} +\infty
\] 
and we have:
\[
\frac d{ds}\left(\frac 1{\Gamma(s)} \Omega(s) \right)_{s=0} 
     = \int_0^{t_0}t^{-\frac 3 2} \frac{S(n,t,Y^n)}{\vol(M_n)} \frac{dt}t \xrightarrow[n\to\infty]{}0.
\]
More precisely, we can take:
\[
Y_j^n = \left( \frac{\vol(M_n)}{\sum_{j=1}^{h_n} \frac{\alpha_2(\Lambda_{n,j})^2}{\alpha_1(\Lambda_{n,j})^2}} \right)^{\frac 1{10}} \cdot \alpha_1(\Lambda_{n,j}).
\]
\end{prop}

\begin{proof}
Let $Y \in [1, +\infty[^{h_n}$. Recall that $B_{n,j} = B_{\Lambda_{n,j}}$ et $B_{n,j}^{Y_j} = B_{\Lambda_{n,j}}^{Y_j}$ were defined in \eqref{fundstrip}. Let $R_{n,j}$ be the union of pieces of horoballs $\gamma (B_{n,j} - B_{n,j}^{Y_j}) \cap B_{n,j}$ for $\gamma \not \in \Gamma_{P_j}$. Separating unipotent and loxodromic elements we obtain as in \eqref{unfolding} the equality :
\begin{align*}
S(n,t,Y) &= \sum_{j=1}^n \int_{B_{n,j}^{Y_j}} \sum_{\eta\in\Lambda_{n,j}-\{0\}} e^{-\frac{d(x,\eta x)^2}{5t}} dx \\
         &\quad - \sum_{j=1}^{h_n} \int_{R_{n,j}} \sum_{\eta\in\Lambda_{n,j}-\{0\}} e^{-\frac{d(x, \eta x)^2}{5t}} dx
         + \int_{M-M^Y} \sum_{\gamma\in(\Gamma_n)_\lox} e^{-\frac{d(x,\gamma x)^2}{5t}} dx
\end{align*}
and we put : 
\begin{gather*}
T_1 = \int_{M-M^Y} \sum_{\gamma\in(\Gamma_n)\lox} e^{-\frac{d(x,\gamma x)^2}{5t}} dx, \quad T_2 = \sum_{j=1}^{h_n} \int_{R_{n,j}} \sum_{\eta\in\Lambda_{n,j}-\{0\}} e^{-\frac{d(x,\gamma x)^2}{5t}} dx, \\
T_3 = \sum_{j=1}^n \int_{B_{n,j}^{Y_j}} \sum_{\eta\in\Lambda_{n,j}-\{0\}} e^{-\frac{d(x,\eta x)^2}{5t}} dx. 
\end{gather*}

The term $T_1$ is dealt with exactly as the similar term $T_1$ in \ref{sec:small_time_approx}. The term $T_2$ is dealt with similarly to $I_2$ in \eqref{hororeste} in the proof of Proposition \ref{geom_side} (taking into account the integral over $t$). 

We deal with the more delicate term $T_3$ cusp by cusp: put 
\[
S_j = \int_{B_{n,j}^{Y_j}} \sum_{\eta\in\Lambda_{n,j}-\{0\}} e^{-\frac{d(x,\eta x)^2}{5t}} dx, 
\]
then we have: 
\begin{align*}
S_j     &= \vol(\Lambda_{n,j}) \int_0^{Y_j} \int_{\alpha_1(\Lambda_{n,j})}^{+\infty} e^{-\frac{\ell(r/y)^2}{5t}} d\mathcal{N}_{n,j}^*(r) \frac{dy}{y^3} \\
        &= \frac {\vol(\Lambda_{n,j})}{5t} \int_0^{Y_j} \int_{\alpha_1(\Lambda_{n,j})/y}^{+\infty} \frac{d\ell}{dr} e^{-\frac{\ell(r)^2}{5t}} \mathcal{N}_{n,j}^*(ry) dr\frac{dy}{y^3}.
\end{align*}

By Lemma \ref{ucount} we have $\mathcal{N}_{n,j}^*(ry)\ll \frac{(ry)^2}{\vol(\Lambda_{n,j})} + \frac{\alpha_2(\Lambda_{n,j})}{\alpha_1(\Lambda_{n,j})}$ where the constant does not depend on $n,j$. It follows that 
\begin{equation} \label{MAJO}
\begin{split}
S_j
    &\ll \int_0^{Y_j} \int_{\alpha_1(\Lambda_{n,j})/y}^{+\infty} r^2 e^{-\frac{\ell(r)^2}{5t}} dr \frac{dy}y t^{-\frac 5 2} \\
    &\quad + \vol(\Lambda_{n,j}) \int_0^{Y_j} \int_{\alpha_1(\Lambda_{n,j})/y}^{+\infty} \frac{\alpha_2(\Lambda_{n,j})}{\alpha_1(\Lambda_{n,j})} e^{-\frac{\ell(r)^2}{5t}} dr \frac{dy}{y^3} t^{-\frac 5 2}
\end{split}
\end{equation}
We deal with the first line now. We split the integration between $r\ge 2$ and $\alpha_1(\Lambda_{n,j})/y\le r\le 2$. When $r\ge 2$ it follows from Lemma \ref{eval} that $\ell(r)\gg\log r$ and we obtain:
\begin{equation} \label{MAJO1}
\begin{split}
\int_0^{t_0}\int_0^{Y_j} \int_2^{+\infty} r^2 e^{-\frac{\ell(r)^2}{5t}} dr \frac{dy}y t^{-\frac 5 2} \frac{dt}t    
   &\ll \int_0^{t_0} e^{-\frac{(\log 2)^2}{5t}}\log Y_j \, t^{-\frac 5 2} \frac{dt}t\\
   &\ll \log Y_j.
\end{split}
\end{equation}
On the other hand, when $r\in[0,2]$ we have $\ell(r)>cr$ for some $c>0$ and thus:
\begin{equation} \label{MAJO2}
\begin{split}
\int_0^{t_0} \int_0^{Y_j} \int_{\alpha_1(\Lambda_{n,j})/y}^2 r^2 e^{-\frac{\ell(r)^2}{5t}} dr \frac{dy}y t^{-\frac 5 2} \frac{dt}t    
   &\ll \int_0^{Y_j}\int_0^{t_0} e^{-\frac{(\alpha_1(\Lambda_{n,j})/y)^2}{Ct}} t^{-\frac 5 2}\frac{dt}t \frac{dy}y \\
   &\ll \int_0^{Y_j} \int_0^{+\infty} e^{-\frac 1{Ct}}t^{-\frac 5 2}\frac{dt}t \left(\frac y{\alpha_1(\Lambda_{n,j})}\right)^5 \frac{dy} y \\
   &\ll \left(\frac{Y_j}{\alpha_1(\Lambda_{n,j})}\right)^5.
\end{split}
\end{equation}

For the second line of \eqref{MAJO} we have the majoration 
\begin{align*}
\vol(\Lambda_{n,j}) \int_0^{Y_j} \int_{\alpha_1(\Lambda_{n,j})/y}^{+\infty} \frac{\alpha_2(\Lambda_{n,j})}{\alpha_1(\Lambda_{n,j})} e^{-\frac{\ell(r)^2}{5t}} dr \frac{dy}{y^3}  
        & \ll \alpha_2(\Lambda_{n,j})^2 \int_0^{Y_j} \int_{\alpha_1(\Lambda_{n,j})/y}^{+\infty} e^{-\frac{\ell(r)^2}{5t}} dr \frac{dy}{y^3} \\
        &\ll \frac{\alpha_2(\Lambda_{n,j})^2}{Y_j^2} e^{-\frac{(\alpha_1(\Lambda_{n,j})/Y_j)^2}{Ct}}.
\end{align*}
and it follows as in \eqref{MAJO2} above that 
\begin{equation} \label{MAJO3}
\int_0^{t_0} \vol(\Lambda_{n,j}) \int_0^{Y_j} \int_{\alpha_1(\Lambda_{n,j})/y}^{+\infty} \frac{\alpha_2(\Lambda_{n,j})}{\alpha_1(\Lambda_{n,j})} e^{-\frac{\ell(r)^2}{5t}} dr \frac{dy}{y^3} t^{-\frac 5 2} \frac{dt} t\ll \frac{\alpha_2(\Lambda_{n,j})^2}{\alpha_1(\Lambda_{n,j})^2} \times \left( \frac{Y_j}{\alpha_1(\Lambda_{n,j})}\right)^3.
\end{equation}
Putting \eqref{MAJO},\eqref{MAJO1},\eqref{MAJO2} and \eqref{MAJO3} together and summing over the cusps we obtain:
\begin{equation} \label{MAJO_U}
\int_0^{t_0} \sum_{j=1}^{h_n} S_j \frac {dt}t \ll \sum_{j=1}^{h_n} \left( \log Y_j + \left(\frac{Y_j}{\alpha_1(\Lambda_{n,j})}\right)^5 + \frac{\alpha_2(\Lambda_{n,j})^2}{\alpha_1(\Lambda_{n,j})^2} \times \left( \frac{Y_j}{\alpha_1(\Lambda_{n,j})} \right)^3 \right)
\end{equation}
We now define 
\[
a_n = \left( \frac{\vol(M_n)}{\sum_{j=1}^{h_n} \frac{\alpha_2(\Lambda_{n,j})^2}{\alpha_1(\Lambda_{n,j})^2}} \right)^{\frac 1{10}}
\]
so that $a_n$ tends to infinity, and put 
\begin{equation} \label{defY}
Y_j^n = \alpha_1(\Lambda_{n,j})\times a_n
\end{equation}
so that $\min_j(\alpha_1(\Lambda_{n,j}) / Y_j^n) = a_n^{-1} \xrightarrow[n\to\infty]{}0$. We let $S_j^n$ be $S_j$ for $Y = Y_j^n$. From \eqref{MAJO_U} above we finally get that: 
\begin{align*}
\int_0^{t_0} \frac {\sum_j S_j^n}{\vol(M_n)}  \frac{dt}t 
          &\ll  \frac 1{\vol(M_n)} \sum_{j=1}^{h_n} \left( \log a_n + \log(\alpha_1(\Lambda_{n,j})) + a_n^5 + a_n^3 \cdot \frac{\alpha_2(\Lambda_{n,j})^2}{\alpha_1(\Lambda_{n,j})^2} \right) \\
          &\le \frac{\log(\vol M_n)}{\vol(M_n)} + \frac{\sum_{j=1}^{h_n}\log\alpha_1(\Lambda_{n,j})} {\vol(M_n)} + \left(\frac {h_n}{\vol(M_n)} \right)^{\frac 1 2} + \left( \frac{\sum_{j=1}^{h_n} \frac{\alpha_2(\Lambda_{n,j})^2}{\alpha_1(\Lambda_{n,j})^2}} {\vol(M_n)} \right)^{\frac 7{10}} 
\end{align*}
The second summand is $o(\vol M_n)$ because of the condition \eqref{norm_height} on the height functions, the third by Lemma \ref{nbcuspsBS} and the last one by condition \eqref{square!}. 
\end{proof}

%%%%%%%%%%%%%%%%%%%%%%%%%%%%%%%%%%%%%%%%%%%%%%%%%%%%%%%%%%%%

\subsection{Comparisons} 

In this subsection we will abbreviate:
$$
\otr_Y K = \int_{M^Y} \tr K(x,x) dx
$$
for a continuous kernel $K$ on a complete hyperbolic manifold $M$. 

\begin{prop} \label{terme2}
Let $t_0>0$, $p=1,2,3$ and $Y^n$ the sequence from Proposition \ref{mainmajo}. We have
\begin{equation} \label{t2}
\frac 1{\vol(M_n)} \frac{d}{ds}\left(\frac 1{\Gamma(s)}\int_0^{t_0}(\otr_{Y^n}(e^{-t\Delta^p[M_n]})-\otr(e^{-t\Delta_\abs^p[M_n^{Y^n}]}))t^s \frac{dt}t \right)_{s=0} \xrightarrow[n\to\infty]{} 0.
\end{equation}
\end{prop}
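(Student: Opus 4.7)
Using the geometric expansions~\eqref{autodev} for the heat kernel of $M_n$ and~\eqref{dab} for the absolute heat kernel of $M_n^{Y^n}$, I would split the integrand as
\begin{equation*}
\otr_{Y^n}(e^{-t\Delta^p[M_n]}) - \otr(e^{-t\Delta_\abs^p[M_n^{Y^n}]}) = A_n(t) + B_n(t),
\end{equation*}
where the identity (diagonal) part is
\begin{equation*}
A_n(t) = \int_{D_n^{Y^n}} \tr\bigl( e^{-t\Delta^p[\HH^3]}(x,x) - e^{-t\Delta_\abs^p[\widetilde{M_n^{Y^n}}]}(x,x) \bigr) dx
\end{equation*}
and $B_n(t)$ gathers the contributions of $\gamma \in \Gamma_n \setminus\{1\}$ from both expansions. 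It suffices to show that after dividing by $\vol M_n$, each piece contributes a derivative at $s=0$ that tends to $0$.

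For $B_n$, the Gaussian bounds of Theorem~\ref{cly} apply uniformly to both heat kernels (the domain $\widetilde{M_n^{Y^n}}$ has curvature $-1$, and its injectivity radius is bounded below independently of $n$ thanks to the definition $Y_j^n = \alpha_1(\Lambda_{n,j}) a_n$ together with the Margulis lemma), so that $|B_n(t)| \ll t^{-3/2} S(n,t,Y^n)$ in the notation of Proposition~\ref{mainmajo}, up to absorbing $|\rho(\gamma^{-1})|$ into the Gaussian constant. The vanishing of the renormalized $s$-derivative then follows directly from Proposition~\ref{mainmajo}.

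For $A_n$, I would invoke a L\"uck--Schick type comparison \cite{LS}: for any $x \in \widetilde{M_n^{Y^n}}$,
\begin{equation*}
\bigl| e^{-t\Delta^p[\HH^3]}(x,x) - e^{-t\Delta_\abs^p[\widetilde{M_n^{Y^n}}]}(x,x) \bigr| \ll t^{-3/2} e^{-d(x, \pl \widetilde{M_n^{Y^n}})^2/C_2 t},
\end{equation*}
proved by a finite propagation speed or reflection argument. Integrating over a collar of the boundary yields $|A_n(t)| \ll t^{-1}\,\mathrm{area}(\pl M_n^{Y^n})$, and in fact $A_n(t)$ admits a full small-$t$ expansion in powers of $t^{1/2}$ (coming from the standard boundary heat expansion on a manifold with absolute boundary conditions, cf. \cite{Gilkey}) whose coefficients are all bounded by $\mathrm{area}(\pl M_n^{Y^n})$. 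Lemma~\ref{Gamma} then reduces the problem to bounding this boundary area. By Minkowski's second theorem each boundary torus has area $\asymp \alpha_2(\Lambda_{n,j})/(\alpha_1(\Lambda_{n,j}) a_n^2)$, so
\begin{equation*}
\mathrm{area}(\pl M_n^{Y^n}) \ll a_n^{-2} \sum_{j=1}^{h_n} \frac{\alpha_2(\Lambda_{n,j})}{\alpha_1(\Lambda_{n,j})} = o(\vol M_n)
\end{equation*}
by~\eqref{sumcusp} and $a_n \to +\infty$.

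The main obstacle is establishing the L\"uck--Schick comparison precisely for $p$-forms under absolute boundary conditions and, crucially, controlling the coefficients of the small-$t$ asymptotic expansion of $A_n(t)$ uniformly by $\mathrm{area}(\pl M_n^{Y^n})$; once this analytic input is secured, everything combines cleanly with Proposition~\ref{mainmajo} and the boundary-area bound above.
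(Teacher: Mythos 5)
Your decomposition into the diagonal term $A_n$ and the off-diagonal term $B_n$ matches the paper's split of the integrand into $E_1$ (identity contribution) and $E_2+E_3$ (off-diagonal contributions from the two heat kernels), and your treatment of $B_n$ — Gaussian bounds from Theorem~\ref{cly} feeding into Proposition~\ref{mainmajo} — is precisely what the paper does. You also correctly compute $\vol(\pl M_n^{Y^n}) \asymp a_n^{-2}\sum_j \alpha_2(\Lambda_{n,j})/\alpha_1(\Lambda_{n,j}) = o(\vol M_n)$, which the paper uses in its own Theorem~\ref{CMA} proof.

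Where you diverge from the paper is in the handling of $A_n$, and this is where the genuine gap sits. The paper does not try to produce a pointwise comparison between $e^{-t\Delta^p[\HH^3]}(x,x)$ and $e^{-t\Delta_\abs^p[\widetilde{M_n^{Y^n}}]}(x,x)$ directly. Instead it observes that (since the integrand is $\Gamma_n$-invariant and the expansion from $\widetilde{M_n^{Y^n}}$ depends only on the covered data) the renormalized $s$-derivative of $\int_0^{t_0} A_n(t)\,t^{s-1}dt$ is $\vol M_n$ times the difference $\log T^{(2)}(M;V)-\log T_\abs^{(2)}(\widetilde{M^{Y^n}};V)$ of $L^2$-torsions, and then invokes the convergence $\log T^{(2)}(M;V)-\log T_\abs^{(2)}(\widetilde{M^Y};V)\to 0$ as all $Y_j\to\infty$ from L\"uck--Schick \cite[Prop.\,2.37]{LS}, with a paragraph verifying that each of L\"uck--Schick's seven sub-terms $s_1,\dots,s_7$ is in fact $o(\vol M_n)$ for nontrivial coefficients and $n$-dependent heights. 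Your plan — prove the pointwise estimate $|e^{-t\Delta^p[\HH^3]}(x,x)-e^{-t\Delta_\abs^p[\widetilde{M_n^{Y^n}}]}(x,x)|\ll t^{-3/2}e^{-d(x,\pl)^2/C_2t}$ by finite propagation speed, integrate to get $|A_n(t)|\ll t^{-1}\vol(\pl M_n^{Y^n})$, and then supply a full boundary heat-expansion with coefficients uniformly bounded by $\vol(\pl M_n^{Y^n})$ — is essentially a from-scratch reproof of L\"uck--Schick's internal estimates. You flag this as "the main obstacle," and that is accurate: the pointwise comparison for $p$-forms with absolute boundary conditions and the uniformity of all expansion coefficients (not just the leading one; you need the $t^{-1}$, $t^{-1/2}$, $t^0$ coefficients plus a uniform $O(t^{1/2})$ remainder to apply Lemma~\ref{Gamma}) constitute precisely the analytic content that the paper outsources to~\cite{LS}. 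The boundary being a horosphere with universal second fundamental form makes the uniformity plausible, but this step is not a formality. Until that input is established, the proof of the statement is incomplete; the paper's route through~\cite{LS} is the shorter path to filling it.
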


\begin{proof}
From \eqref{dab} it follows that
\begin{align*}
\otr_{Y^n}(e^{-t\Delta^p[M_n]})-\otr(e^{-t\Delta_\abs^p[M_n^{Y^n}]})
   &= \int_{D_n^{Y_n}} \tr(e^{-t\Delta[\HH^3]}(x,x)-e^{-t\Delta_\abs^p[\widetilde{M_n^{Y^n}}]}(x,x))dx\\
   &\quad + \int_{M_n^{Y^n}} \sum_{\gamma\in\Gamma_n-\{1\}}\tr(e^{-t\Delta^p[\HH^3]}(x,\gamma x))dx\\
   &\quad + \int_{M_n^{Y^n}} \sum_{\gamma\in\Gamma_n-\{1\}}\tr(e^{-t\Delta_\abs^p[\widetilde{M_n^{Y^n}}]}(x,\gamma x)) dx\\    
   &=: E_1+E_2+E_3.
\end{align*}

In the case all $M_n$ are covers of a given orbifold $M$ and all $Y_j^n,j=1,\ldots,h_n$ are equal the manifolds $\widetilde{M_n^{Y^n}}$ are equal to $\widetilde{M^{Y^n}}$, so that the first summand $\frac{d}{ds}(\Gamma(s)^{-1}\int_0^{t_0} E_1 t^{s-1}dt)_{s=0}$ is equal to $\vol(M_n) \cdot (\log T^{(2)}(M;V) - \log T_\abs^{(2)}(\widetilde{M^{Y^n}};V))$. 

We will now use the method of L\"uck and Schick in \cite{LS} to study this. Note that in loc. cit. these authors deal only with trivial coefficients. On the other hand, once the estimates in their Theorem 2.26 are established the proof works in all cases. The proof given in loc. cit for this result likely adapts to unimodular coefficients ; but since we need the results only for manifolds of the form $\wdt{M^Y}$, in this case their result can be deduced from \eqref{Luck_Schick_here} in the Appendix. It is proven in \cite[Section 2]{LS} that 
\begin{equation} \label{compL2}
\log T^{(2)}(M;V)-\log T_\abs^{(2)}(\widetilde{M^Y};V)\xrightarrow[\text{all }Y_j\to\infty]{} 0.  
\end{equation}
In this case we finally get
\begin{equation} \label{E1}
\frac{d}{ds}\left(\frac{1}{\Gamma(s)}\int_0^{t_0} E_1 t^s \frac{dt}t\right)_{s=0}\xrightarrow[n\to\infty]{}0.
\end{equation}

We can also adapt the argument of L\"uck and Schick to our general situation, as we now explain. Their key result is Proposition 2.37, and to prove it they separate the left-hand side of \eqref{E1} into seven summands $s_1,\ldots,s_7$. Let $Y_n=\min_j Y_j^n$; for $i\not=4$ they get by straightforward arguments the bounds (caveat: their parametrisation of the cusps is by arclength in the $y$ direction, so their statements are different in form):
\begin{gather*}
|s_1| \ll \vol(M_n)e^{-(\log Y_n)^2}, \quad |s_2| \ll \vol\left(M_n-M_n^{\sqrt{Y^n}}\right), \quad |s_3| \ll \vol\left(M_n-M_n^{Y^n/2}\right) \\
s_5=0,\quad |s_6| \ll \vol\left(M_n-M_n^{Y^n}\right), \quad |s_7| \ll \vol\left(\pl M_n^{Y^n}\right)
\end{gather*}
and as $Y_n\to+\infty$ and $\min_j(Y_j^n/\alpha_1(\Lambda_{n,j}))\to+\infty$ all the terms of the right-hand sides above are $o(\vol M_n)$. The argument for $s_4$ is more involved: they subdivide it into $s_{41},s_{42},s_{43}$ and they prove that 
$$
|s_{41}|,|s_{42}| \ll \vol\left(M_n-M_n^{Y^n/2}\right), |s_{43}| \ll e^{-2\log Y_n}\vol\left(\pl M_n^1\right) 
$$
and the terms on the right in both majorations are $o(\vol M_n)$, which finishes the proof of \eqref{E1}. 

To finish the proof we observe that Propositions \ref{cly} and \ref{cly_bd} yield the bounds 
\begin{equation*}
\tr(e^{-t\Delta^p[\HH^3]}(x,\gamma x)) ,\, \tr(e^{-t\Delta_\abs^p[\widetilde{M^Y}]}(x,\gamma x))\ll t^{-3/2}e^{-d(x,\gamma x)^2/5t} 
\end{equation*}
where the constant does not depend on (large enough, see the remark before Proposition \ref{mainmajo}) $Y$, so that we have in the notation of Proposition \ref{mainmajo} the inequality $E_2,E_3 \ll S(n,t,Y^n)$ and we get 
\begin{equation*}
\frac{d}{ds}\left(\frac{1}{\Gamma(s)}\int_0^{t_0} E_i t^{s-1}dt\right)_{s=0}\xrightarrow[n\to\infty]{}0.
\end{equation*}
for $i=2,3$, which finishes the proof of the proposition.
\end{proof}

\begin{prop}
Let $Y^n$ be the sequence from Proposition \ref{mainmajo}, then we have
\begin{equation*}
\frac 1 {\vol(M_n)} \frac d{ds}\left( \frac 1{\Gamma(s)} \int_0^{t_0} (\otr_R e^{-t\Delta^p[M_n]} - \otr_{Y^n} e^{-t\Delta^p[M_n]}) t^s\frac{dt}t \right)_{s=0} \xrightarrow[n\to\infty]{} 0.
\end{equation*}
\label{comp}
\end{prop}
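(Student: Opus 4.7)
The plan is to use the geometric-side computation from the proof of Proposition~\ref{geomside} to write, for each $t>0$ and each $n$,
\begin{equation*}
\otr_R e^{-t\Delta^p[M_n]} - \otr_{Y^n} e^{-t\Delta^p[M_n]} = -\Bigl(\sum_{j=1}^{h_n} 2\pi \log Y_j^n\Bigr)\, \theta_n^p(t) + \mathcal{E}_n^p(t, Y^n),
\end{equation*}
where $\theta_n^p(t) = \int_0^{+\infty} r\, h_t^p(\ell(r))\, dr$ (so that $\int_0^{t_0}\theta_n^p(t)t^s\,dt/t$ is the function $\Theta(s)$ of \eqref{xitheta}) and $\mathcal{E}_n^p(t, Y^n)$ collects the finite-$Y$ remainders: the restriction to $D_n\setminus D_n^{Y^n}$ of the identity and loxodromic contributions, together with the lattice-counting tails on the last line of \eqref{geomuni}. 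It suffices to show that both summands give $o(\vol M_n)$ after applying the operator $\frac{d}{ds}\bigl|_{s=0}\bigl(\Gamma(s)^{-1}\int_0^{t_0}\cdot\,t^s\,dt/t\bigr)$.

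For the main log term, \eqref{prol23} combined with Lemma~\ref{Gamma} (as used in the proof of Proposition~\ref{dea}) shows that $\Theta(s)$ extends meromorphically to $\CC$ and is holomorphic at $0$, so $\Theta'(0)$ is an absolute constant $C_p$ depending only on $V$ and $p$. The main term thus contributes $-2\pi C_p\sum_j\log Y_j^n$, and we need $\sum_j\log Y_j^n = o(\vol M_n)$. Writing $Y_j^n=\alpha_1(\Lambda_{n,j})\,a_n$ as in Proposition~\ref{mainmajo}, this splits as $\sum_j\log\alpha_1(\Lambda_{n,j}) + h_n\log a_n$; the first summand is $o(\vol M_n)$ by the hypothesis on height functions in Theorem~\ref{conv1} (and Lemma~\ref{sumalpha} in the covering setting), and the second is $o(\vol M_n)$ provided $a_n$ grows slowly enough.

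For the error $\mathcal{E}_n^p$ I would treat each piece separately. The identity tail equals $\vol(M_n\setminus M_n^{Y^n})\,\tr k_t^p(x_0,x_0)$; since $\vol(M_n\setminus M_n^{Y^n}) = \sum_j \vol\Lambda_{n,j}/(2Y_j^{n\,2}) \ll a_n^{-2}\sum_j\alpha_2(\Lambda_{n,j})/\alpha_1(\Lambda_{n,j})$, this is $o(\vol M_n)$ by \eqref{sumcusp}. The loxodromic tail is bounded above by $\int_{D_n^{Y^n}}\sum_{\gamma\in(\Gamma_n)_\lox}|\tr\gamma^* k_t^p(x,\gamma x)|\,dx$, whose Mellin transform is $o(\vol M_n)$ by the Gaussian bound of Theorem~\ref{cly} together with the argument of Lemma~\ref{limhyp}, adapted to include the $t$-integration as in the small-time part of the proof of Theorem~\ref{Main1}. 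Finally, the lattice-counting tails from the last line of \eqref{geomuni}, which involve $\int_{\alpha_1/y}^{+\infty}(\cdots)\,E_{\Lambda_{n,j}}$, are bounded using Lemma~\ref{ucount} and the Gaussian decay of $h_t^p$, along exactly the same lines as the bounds \eqref{MAJO1}--\eqref{MAJO3} in Proposition~\ref{mainmajo}.

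The main obstacle is reconciling the two constraints on $a_n$: Proposition~\ref{mainmajo} requires $a_n$ to be a sufficiently large power of $\vol M_n/\sum(\alpha_2/\alpha_1)^2$ to kill the small-time heat kernel remainders, while the present proposition needs $h_n\log a_n = o(\vol M_n)$, so $a_n$ cannot grow too fast. Since $\vol M_n/h_n\to\infty$ by Lemma~\ref{nbcuspsBS}, one can always find a single sequence $a_n\to\infty$ with $\log a_n = o(\vol M_n/h_n)$ that still satisfies the estimates of Proposition~\ref{mainmajo} after decreasing the exponent $1/10$ used there; making this choice explicit and verifying compatibility is the delicate point.
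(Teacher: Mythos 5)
Your decomposition is essentially the one the paper uses: split $\otr_R - \otr_{Y^n}$ into the main $\log Y_j^n$ piece (handled via $\Theta'(0)$ from \eqref{xitheta}) plus finite-$Y$ remainders. You correctly note that the remainders should also contain the identity and loxodromic tails over $D_n\setminus D_n^{Y^n}$, which the paper's displayed formula for $\otr_{Y^n}-\otr_R$ (listing only $T_1,T_2,T_3$) actually omits; on that point you are more careful than the source.

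The genuine gap is in how you treat the ``lattice-counting tails'' and the identity tail. You claim they are \emph{bounded} ``using Lemma~\ref{ucount} and the Gaussian decay of $h_t^p$, along exactly the same lines as the bounds \eqref{MAJO1}--\eqref{MAJO3},'' but this does not work: for $r<\alpha_1(\Lambda_{n,j})/Y_j$ one has $\mathcal{N}^*_{\Lambda_{n,j}}(rY_j)=0$, so $\vol\Lambda_{n,j}\,E_{\Lambda_{n,j}}(rY_j)/(rY_j)^2=-\pi$ exactly, and $\int_0^{\alpha_1/Y_j}h_t^p(\ell(r))\,dr$ inherits the full $t^{-3/2}$ singularity from \eqref{dah} without any compensating Gaussian factor. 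Consequently $\int_0^{t_0}(\cdots)\,t^{s-1}dt$ diverges at $s=0$ and, just as for $T_1$, one must meromorphically continue in $s$ using the asymptotic expansion of $h_t^p$ and Lemma~\ref{Gamma}, then evaluate $\frac{d}{ds}|_{s=0}$; this is precisely what the paper does for $T_2$ (splitting at $r=\alpha_1/Y_j$) and for $T_3$ (splitting at $r=1$ after extracting $\kappa_j'$). The same remark applies to your identity tail $\vol(M_n\setminus M_n^{Y^n})\tr k_t^p(x_0,x_0)$: here too $\tr k_t^p(x_0,x_0)\sim t^{-3/2}$, so the Mellin transform needs continuation, not a bound. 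These are the same mechanisms and the conclusions you want do hold, but ``bounded along the same lines as \eqref{MAJO1}--\eqref{MAJO3}'' is the wrong claim.

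On the choice of $a_n$: your worry is overstated. With $a_n=(\vol M_n/S_n)^{1/10}$ where $S_n=\sum_j(\alpha_2(\Lambda_{n,j})/\alpha_1(\Lambda_{n,j}))^2\ge h_n$, one has $\log a_n\le\tfrac{1}{10}\log(\vol M_n/h_n)$, hence $h_n\log a_n\le\tfrac{1}{10}h_n\log(\vol M_n/h_n)=\tfrac{1}{10}\vol M_n\cdot x_n\log(1/x_n)$ with $x_n=h_n/\vol M_n\to0$, which is $o(\vol M_n)$ — no adjustment of the exponent is necessary. (You are right, though, that the paper's asserted bound ``$O(\log\vol M_n+\sum_j\log\alpha_1)$'' is imprecise as written; the correct intermediate bound is $O(h_n\log(\vol M_n/h_n)+\sum_j\log\alpha_1)$, and the conclusion $o(\vol M_n)$ still follows from Lemmas~\ref{nbcuspsBS} and~\ref{sumalpha}.)
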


\begin{proof}
From the explicit expression of the $o(1)$ terms in \eqref{geomuni} and \eqref{unfolding} we get for any $\phi\in\mathcal{A}(\RR)$ and $Y\in[1,+\infty)^{h_n}$:
\begin{equation}
\begin{split}
\otr_Y \phi(\Delta^p[M_n]) - \otr_R \phi(\Delta^p[M_n]) &= \sum_{j=1}^{h_n} 2\pi\log Y_j \int_0^{+\infty}rh_\phi^p(\ell(r)) dr \\
           &\quad + \sum_{j=1}^{h_n} \vol(\Lambda_{n,j}) \int_0^{+\infty} rh_{\phi,p}(\ell(r)) \int_{\max(\alpha_1(\Lambda_{n,j}),rY_j)}^{+\infty} E_{\Lambda_{n,j}}(\rho)\frac{d\rho}{\rho^3} dr \\
           &\quad + \sum_{j=1}^{h_n} \vol(\Lambda_{n,j}) \int_0^{+\infty} \frac{E_{\Lambda_{n,j}}(rY_j)}{(rY_j)^2} h_{\phi,p}(\ell(r)) dr \\
           &\quad + \sum_{j=1}^{h_n} \int_{B_{\Lambda_j}-B_{\Lambda_j}^{Y_j}} \sum_{\gamma\in\Gamma-\Lambda_j} \tr(\gamma^* e^{-t\Delta^p[\HH^3]}(x,\gamma x)) dx
\end{split}
\end{equation}
We want to study the right-hand side with $Y=Y^n$. The last line can be dealt with as the term $T_1$ in \ref{sec:small_time_approx}, using Lemma \ref{gen_count} instead of \eqref{loxcount}---we will not repeat the proof here. We put
\begin{align*}
T_1 &= \sum_{j=1}^{h_n} 2\pi\log Y_j^n \int_0^{+\infty}rh_t^p(\ell(r)) dr \\
T_2 &= \sum_{j=1}^{h_n} \vol(\Lambda_{n,j}) \int_0^{+\infty} \frac{E_{\Lambda_{n,j}}(rY_j^n)}{(rY_j^n)^2} h_t^p(\ell(r)) dr. \\
T_3 &= \sum_{j=1}^{h_n} \vol(\Lambda_{n,j}) \int_0^{+\infty} rh_t^p(\ell(r)) \int_{\max(\alpha_1(\Lambda_{n,j}),rY_j^n)}^{+\infty} E_{\Lambda_{n,j}}(\rho)\frac{d\rho}{\rho^3} dr 
\end{align*}
We deal first with $T_1$; recall that $\Theta$ was defined in \eqref{xitheta} so that
\[
\frac d{ds}\left(\frac 1{\Gamma(s)}\int_0^{t_0} T_1 t^s \frac{dt}t \right)_{s=0} = \sum_{j=1}^{h_n} 2\pi\log Y_j^n \frac{d\Theta}{ds}(0)
\]
and by the definition \eqref{defY} of the sequence  $Y^n$ the right-hand side is an $O(\log(\vol M_n)+\sum_{j=1}^{h_n} \log \alpha_1(\Lambda_{n,j}))$ which is itself an $o(\vol M_n)$ by the assumption \eqref{norm_height}. 

\medskip

Next we deal with $T_2$. We have 
\begin{equation} \label{spT2}
\begin{split}
\frac d{ds} \left(\int_0^{t_0} T_2 t^s\frac{dt}t \right)_{s=0} 
          &= \frac d{ds} \left( \int_0^{t_0} \sum_{j=1}^{h_n} \pi \int_0^{\alpha_1(\Lambda_{n,j})/Y_j^n} h_t^p(\ell(r))dr \, t^s\frac{dt}t \right)_{s=0} \\
          &\quad + \int_0^{t_0} \sum_{j=1}^{h_n} \int_{\alpha_1(\Lambda_{n,j})/Y_j^n}^{+\infty} \vol(\Lambda_{n,j})\frac{E_{\Lambda_{n,j}}(rY_j^n)}{(rY_j^n)^2} h_t^p(\ell(r))dr \frac{dt}t.
\end{split}
\end{equation}
As in the proof of Proposition \ref{mainmajo} we get that the second line is bounded by
\begin{equation} \label{line2}
\sum_{j=1}^{h_n} \left( 1+\frac{\alpha_2(\Lambda_{n,j})^2}{\alpha_1(\Lambda_{n,j})^2}\right) \times\left(\frac{Y_j^n}{\alpha_1(\Lambda_{n,j})}\right)^3
\end{equation}
which we sam there to be an $o(\vol M_n)$. The first line needs analytic continuation; recall the asymptotic expansion 
\eqref{dah} : 
\[
h_t^p(\ell) = \sum_{k=0}^3 b_k^p(\ell) e^{-\frac{\ell^2}{5t}} t^{\frac {-k} 2} + O(t^{\frac 1 2}). 
\]
The term associated to the $O(t^{\frac 1 2})$ is easily seen to be bounded by $h_n$ : indeed, $s \mapsto \int_0^{t_0}t^{s-1/2}dt$ is regular at $s = 0$ and we get that
\begin{equation} \label{t^12}
\frac d{ds} \left( \int_0^{t_0} \sum_{j=1}^{h_n} \int_0^{\alpha_1(\Lambda_{n,j})/Y_j^n} t^{\frac 1 2} t^s\frac{dt}t \right)_{s=0} = \sum_{j=1}^{h_n} \frac{\alpha_1(\Lambda_{n, j})}{Y_j^n} \frac d{ds}\left( \int_0^{t_0}t^{s-\frac 1 2}dt \right) = o(h_n) 
\end{equation}
since $\alpha_1(\Lambda_{n, j}) / Y_j^n = o(1)$. 

For $k = 0, \ldots, 3$ we have: 
\begin{equation} \label{sep_vs_nss}
\begin{split}
\int_0^{\alpha_1(\Lambda_{n,j})/Y_j^n} b_k^p(\ell(r)) e^{-\frac{\ell(r)^2}{5t}} dr 
     &= \int_0^{\alpha_1(\Lambda_{n,j})/Y_j^n}  b_k^p(\ell(r)) e^{-\frac{\ell(r)^2}{5t}} dr \\
     &= \int_0^1  b_k^p(\ell(r)) e^{-\frac{\ell(r)^2}{5t}} dr -\int_{\alpha_1(\Lambda_{n,j})/Y_j^n}^1  b_k^p(\ell(r)) e^{-\frac{\ell(r)^2}{5t}} dr.
\end{split}
\end{equation}
We have: 
\begin{equation} \label{est_nss}
\int_{\alpha_1(\Lambda_{n,j})/Y_j^n}^1  b_k^p(\ell(r)) e^{-\frac{\ell(r)^2}{5t}} dr \ll e^{-\frac{(\alpha_1(\Lambda_{n,j})/Y_j^n)^2}{Ct}}, 
\end{equation}
and moreover \eqref{prol23} yields the expansion
\begin{equation} \label{dev_vs}
\int_0^1  b_k^p(\ell(r)) e^{-\frac{\ell(r)^2}{5t}} dr = \sum_{l=2}^m c_l t^{l/2}
\end{equation}
with coefficients $c_l$ not depending on $n,j$. It follows from \eqref{sep_vs_nss}, \eqref{est_nss} and \eqref{dev_vs} that: 
\begin{equation} \label{estimate above}
\int_0^{\alpha_1(\Lambda_{n,j})/Y_j^n} \sum_{l=0}^3 b_l^p(\ell(r)) e^{-\frac{\ell(r)^2}{5t}} dr =  \pi \sum_{l=2}^m c_l t^{\frac{l-3}2} + O(e^{-\frac{(\alpha_1(\Lambda_{n,j})/Y_j^n)^2}{Ct}})
\end{equation}
Gathering \eqref{spT2},\eqref{line2}, \eqref{t^12} and \eqref{estimate above} we get: 
\begin{equation} \label{presque fini}
\frac d{ds}\left(\frac 1{\Gamma(s)} \int_0^{t_0} T_2 t^s\frac{dt}t\right)_{s=0} \ll h_n + o(\vol M_n) + \sum_{k=0}^3 \sum_{j=1}^{h_n} \int_0^{t_0} e^{-\frac{(\alpha_1(\Lambda_{n,j})/Y_j^n)^2}{Ct}} t^{\frac {-k} 2} \frac{dt}t. 
\end{equation}
The third summand on the right is dealt with as in the proof of Proposition 
\ref{mainmajo}: for $k=1,2,3$ we have
\[
\int_0^{t_0} e^{-\frac{(\alpha_1(\Lambda_{n,j})/Y_j^n)^2}{Ct}} t^{-\frac k 2} \frac{dt}t \le \left(\frac{Y_j^n}{\alpha_1(\Lambda_{n,j})}\right)^k \int_0^{+\infty} e^{-1/Ct} t^{-\frac k2}\frac{dt}t
\]
and for $k=0$
\[
\int_0^{t_0} e^{-\frac{(\alpha_1(\Lambda_{n,j})/Y_j^n)^2}{Ct}} \frac{dt}t = \int_0^{\frac{Y_j^n}{\alpha_1(\Lambda_{n,j})}t_0} e^{-1/Ct} \frac{dt}t \ll \log(Y_j^n/\alpha_1(\Lambda_{n,j}))
\]
so that we finally obtain
\begin{equation} \label{last_term}
\frac d{ds}\left(\frac 1{\Gamma(s)} \int_0^{t_0} T_2 t^s\frac{dt}t\right)_{s=0} \ll \sum_{j=1}^{h_n} \left(\frac{Y_j^n}{\alpha_1(\Lambda_{n,j})}\right)^3  
\end{equation}
which is an $o(\vol M_n)$ as $n\to\infty$ for $Y=Y^n$. We can finally conclude from \eqref{presque fini} and \eqref{last_term} that 
\[
\frac d{ds}\left(\frac 1{\Gamma(s)} \int_0^{t_0} T_2 t^s\frac{dt}t\right)_{s=0} = o(\vol M_n). 
\]

\medskip

The summand $T_3$ is dealt with in a similar manner. For any $n\ge 1$ and $j=1,\ldots,h_n$ we put
\[
\kappa_j'=\vol(\Lambda_{n,j}) \int_{\max(\alpha_1(\Lambda_{n,j}),rY_j^n)}^{+\infty} E_{\Lambda_{n,j}}(\rho)\frac{d\rho}{\rho^3}, 
\]
We have seen in the proof of Theorem \ref{conv1} that $\kappa_j' \ll \left(\frac{\alpha_2(\Lambda_{n,j})}{\alpha_1(\Lambda_{n,j})}\right)^2$ (uniformly in $r$) and we get 
\begin{align*}
& \vol(\Lambda_{n,j}) \int_0^{+\infty} rh_t^p(\ell(r)) \int_{\max(\alpha_1(\Lambda_{n,j}),rY_j^n)}^{+\infty} E_{\Lambda_{n,j}}(\rho)\frac{d\rho}{\rho^3} dr \\ 
&\qquad = \kappa_j' \int_0^{\alpha_1(\Lambda_{n,j})/Y_j^n} rh_t^p(\ell(r))dr + \kappa_j' \int_{\alpha_1(\Lambda_{n,j})/Y_j^n}^{+\infty} rh_t^p(\ell(r)) dr \\
&\qquad = \kappa_j'\int_0^1 rh_t^p(\ell(r)) dr + O \left(t^{-\frac 3 2} e^{-\frac{(\alpha_1(\Lambda_{n,j})/Y_j^n)^2}{Ct}} \left(\frac{\alpha_2(\Lambda_{n,j})}{\alpha_1(\Lambda_{n,j})}\right)^2 \right)
\end{align*}
where to obtain the last line from the second we used the same arguments as to deal with the term \eqref{sep_vs_nss} for the first summand, and we applied arguments from the proof of Proposition \ref{mainmajo} to the second.

Thus, we have 
\[
\frac d{ds}\left(\frac 1{\Gamma(s)} \int_0^{t_0} T_3 t^s\frac{dt}t\right)_{s=0} \ll \sum_{j=1}^{h_n} \left(\frac{\alpha_2(\Lambda_{n,j})}{\alpha_1(\Lambda_{n,j})}\right)^2 + \sum_{j=1}^{h_n} \left(\frac{\alpha_2(\Lambda_{n,j})}{\alpha_1(\Lambda_{n,j})}\right)^2\int_0^{t_0} e^{-\frac{(\alpha_1(\Lambda_{n,j})/Y_j^n)^2}{Ct}} t^{-\frac 3 2} \frac{dt}t
\]
and we have already proved that the right-hand side is an $o(\vol M_n)$, which concludes the proof.  
\end{proof}

%%%%%%%%%%%%%%%%%%%%%%%%%%%%%%%%%%%%%%%%%%%%%%%%%%%%%%%%%%%%%%%%%%%%%%%%%%%%%%%%

\section{The asymptotic Cheeger--M\"uller equality: the large-time part}

\label{CMA_large}
We will give here a proof of the following result. 

\begin{prop} \label{terme3} 
For the sequence $Y^n$ from proposition \ref{terme2} we have that 
\[
\sup_{n}\left( \frac 1 {\vol(M_n)} \int_{t_0}^{+\infty} \left( \otr(e^{-t\Delta_\abs^p[M_n^{Y^n}]}) - \dim\ker\Delta_\abs^p[M_n^{Y^n}]\right) \frac{dt}t\right)
\]
is finite and goes to 0 as $t_0\to\infty$.
\end{prop}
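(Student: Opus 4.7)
The overall approach is to reduce the integrand for $t \ge 1$ to its value at $t = 1$ using a uniform spectral gap, and then to control that value uniformly. Writing $B_n^p(t) = e^{-t\Delta_\abs^p[M_n^{Y^n}]} - \Pi_n^p$, where $\Pi_n^p$ is the orthogonal projection onto $\ker\Delta_\abs^p[M_n^{Y^n}]$, the integrand equals $\otr B_n^p(t)$. If I grant the forthcoming uniform spectral gap $\lambda_1>0$ of Proposition \ref{ver}, i.e.\ that every nonzero eigenvalue of $\Delta_\abs^p[M_n^{Y^n}]$ is $\ge \lambda_1$ independently of $n$, then for $t \ge 1$ the operator inequality $B_n^p(t) \le e^{-\lambda_1(t-1)} B_n^p(1)$ holds on $(\ker\Delta_\abs^p[M_n^{Y^n}])^\perp$, so that
$$
\int_{t_0}^{+\infty} \otr B_n^p(t)\,\frac{dt}{t} \le \otr B_n^p(1) \cdot \int_{t_0}^{+\infty} e^{-\lambda_1(t-1)} \frac{dt}{t}.
$$
The right-hand integral tends to $0$ as $t_0 \to +\infty$, so what remains is to show that $\otr B_n^p(1)$ is controlled with a suitable normalization in $n$ (implicit here, as in the analogous large-time step \eqref{gtr} in the proof of Theorem \ref{Main1}).

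For the uniform bound $\otr B_n^p(1) \ll \vol M_n$, I would combine Theorem \ref{cly} with a careful treatment near the boundary of $M_n^{Y^n}$. On the thick part of $M_n^{Y^n}$, the injectivity radius is uniformly bounded below and the curvature is $-1$, so Theorem \ref{cly} yields a pointwise bound $|e^{-\Delta_\abs^p[M_n^{Y^n}]}(x,x)| \ll 1$ at time $t=1$. On the cusp pieces up to the truncation level, the manifold is isometric to a warped product $T_{n,j} \times (1, Y_j^n)$, whose heat kernel with absolute boundary conditions at height $Y_j^n$ may be analyzed by doubling across the boundary and invoking Theorem \ref{cly} on the doubled manifold, giving again a pointwise bound independent of $Y_j^n$. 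Integrating over $M_n^{Y^n}$ then produces the desired $O(\vol M_n)$ estimate, and the same bound applies to $\dim\ker\Delta_\abs^p[M_n^{Y^n}] = b_p(M_n^{Y^n};V)$ by the standard Bishop-type volume comparison between the kernel of the heat operator at $t=1$ and the harmonic projection.

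The essential ingredient, and the main obstacle, is Proposition \ref{ver}. Strong acyclicity (Proposition \ref{strongacyclicity}) gives a spectral gap on the complete manifold $M_n$, but truncating at the height $Y^n$ with absolute boundary conditions can in principle create new small eigenvalues coming from modes concentrated near the boundary tori. Following the strategy of \cite[Chapter 6]{CV}, I expect to control any candidate low-frequency eigenform on $M_n^{Y^n}$ by decomposing it, via an appropriate extension back to the complete manifold, into a discrete (cuspidal plus residual) part and an Eisenstein/continuous part along each cusp. Strong acyclicity disposes of the first; the Maass-Selberg relations \eqref{MS0}, \eqref{MS1} combined with the hypothesis on the intertwining operators $\Psi_l,\Phi_l$ introduced in Theorem \ref{Main1} handle the second. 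The delicate point, which Proposition \ref{ver} should establish, is that the choice $Y_j^n = \alpha_1(\Lambda_{n,j})\, a_n$ with $a_n \to +\infty$ places the truncation high enough that the continuous-spectrum contribution to the Rayleigh quotient stays bounded below by a positive constant uniform in $n$, so that no such small eigenvalue can actually occur.
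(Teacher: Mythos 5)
Your high-level strategy matches the paper's: grant the uniform spectral gap of Proposition~\ref{ver}, write the integrand as $\otr B_n^p(t)$, use the operator inequality $B_n^p(t)\le e^{-\lambda_1(t-1)}B_n^p(1)$ to reduce to $t=1$, and then control $\otr B_n^p(1)$ uniformly after normalizing by $\vol M_n$. The remaining step, bounding $\otr(e^{-\Delta_\abs^p[M_n^{Y^n}]})/\vol M_n$, is exactly where your proposal has a genuine gap.

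You claim a uniform pointwise on-diagonal bound at $t=1$ via Theorem~\ref{cly}, handling the cusp strips ``by doubling across the boundary.'' This fails near $\pl M_n^{Y^n}$. The truncation height is $Y_j^n=\alpha_1(\Lambda_{n,j})\,a_n$ with $a_n\to\infty$, so the systole of the boundary torus at height $Y_j^n$ is $\alpha_1(\Lambda_{n,j})/Y_j^n=1/a_n\to 0$; the injectivity radius of $M_n^{Y^n}$ (and of its double — doubling does not remove the short loops on the boundary torus) therefore degenerates as $n\to\infty$. Theorem~\ref{cly}'s constant $C_1$ depends on a \emph{lower} bound $\delta$ for the injectivity radius, so it cannot yield a bound ``independent of $Y_j^n$'' across $n$. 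Indeed the pointwise diagonal trace near $\pl M_n^{Y^n}$ is genuinely unbounded in $n$: deep in a cusp there are of order $a_n^2\cdot\alpha_1/\alpha_2$ parabolic elements with translation length $\lesssim 1$. What saves the day is that the region where this happens has small volume, so only the \emph{integrated} trace is controlled, not the pointwise value. The paper sidesteps the issue by never aiming at a pointwise bound: it writes $e^{-t\Delta_\abs^p[M_n^{Y^n}]}(x,x)$ as a sum over $\Gamma_n$ of the heat kernel of $\widetilde{M^{Y^n}}$ via~\eqref{dab}, compares the $\gamma=1$ term to the $\HH^3$ heat kernel using the L\"uck--Schick estimates (as already set up in Proposition~\ref{terme2}), and absorbs the $\gamma\ne 1$ terms into $S(n,t,Y^n)$, which was shown to be $o(\vol M_n)$ in Proposition~\ref{mainmajo}. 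This gives the stronger statement~\eqref{convtr2}, from which boundedness of $\otr B_n^p(1)/\vol M_n$ follows.

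A secondary point: your sketch of how Proposition~\ref{ver} should be proved invokes the intertwining-operator hypothesis from Theorem~\ref{Main1}. That hypothesis is \emph{not} assumed in Theorem~\ref{Main2} nor in Proposition~\ref{ver}: the paper's spectral-gap argument for the truncated manifolds is purely geometric, working with the constant-term asymptotics~\eqref{compc}, the cutoff comparison Lemma~\ref{ns}, and the continuity Lemma~\ref{hole}, and never touches $\Psi_l$ or $\Phi_l$. Since you are granting Proposition~\ref{ver} rather than proving it, this does not break the current step, but it does misread the scope of the hypotheses.
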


The main point in the proof of Proposition \ref{terme3} is that for the sequence $Y^n$ of Proposition \ref{mainmajo} there is a uniform spectral gap for the manifolds $M_n^{Y^n}$: the proof of the following statement will take up most of this section.  

\begin{prop} \label{ver} 
There is a $\lambda_1>0$ depending only on $V$ such that for $n$ large enough and any $p$-form $f\in\Omega_\abs^p(M_n^{Y^n};V)$ which is orthogonal to harmonic forms we have 
\[
\frac{\langle\Delta^pf,f\rangle_{L^2(M_n^{Y^n})}}{\|f\|_{L^2(M_n^{Y^n})}} \ge \lambda_1.
\] 
\end{prop}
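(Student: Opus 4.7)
The strategy is to argue by contradiction: assume, after passing to a subsequence, the existence of eigenforms $f_n \in \Omega^p_\abs(M_n^{Y^n};V)$ of unit $L^2$-norm, orthogonal to $\ker\Delta_\abs^p[M_n^{Y^n}]$, with $\Delta_\abs^p f_n = \lambda_n f_n$ and $\lambda_n \to 0$. The goal is to build extensions $\wdt f_n \in L^2\Omega^p(M_n;V)$ whose Rayleigh quotient tends to zero, contradicting the uniform spectral gap $\lambda_0 > 0$ of Proposition \ref{strongacyclicity}. The construction starts by splitting $f_n$ near each cusp $P_j$ into its constant term $(f_n)_{P_j}$ (a form depending only on $y_j$ after a $K$-type decomposition) and a cuspidal remainder $g_n^{(j)}$ with vanishing constant term. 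Non-constant Fourier modes on the cross-section $T_{n,j}$ correspond to $N$-translates of length $\gg \alpha_1(\Lambda_{n,j})$, so by Lemma \ref{eval} the remainder already decays exponentially in $y_j$; extending it by zero to $M_n$ perturbs the Rayleigh quotient only by $o(1)$.

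For the constant term, I would use the representation-theoretic description of section \ref{repr}: each $K$-isotypic component of $(f_n)_{P_j}$ satisfies an ODE in $y_j$ whose eigenvalue equation $(-y^2\pl_y^2 + C_l)F = \lambda_n F$ has $C_l$ bounded below by the relevant Casimir constant (by \eqref{Caseig} and the strong acyclicity of $V$). For $\lambda_n$ sufficiently small, the solutions are $y^{1\pm s_l}$ with $s_l$ real and bounded away from $0$ uniformly in $n$, and $(f_n)_{P_j}$ is a combination $A\,y^{1+s_l} + B\,y^{1-s_l}$ on $[1, Y_j^n]$. The absolute boundary condition at $y_j = Y_j^n$ forces $|A/B| \ll (Y_j^n)^{-2 s_l}$. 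I then define $\wdt f_n$ on each cusp of $M_n$ by keeping only the decaying solution $B\,y^{1-s_l}$ on the entire range $[1,+\infty)$; this is $L^2$ and satisfies the eigenvalue equation exactly. The discarded piece $A\,y^{1+s_l}$ has $L^2$-mass $\ll |B|^2 (Y_j^n)^{-2s_l}/s_l$, which is an $o(1)$ fraction of $\|(f_n)_{P_j}\|^2$ since $Y_j^n/\alpha_1(\Lambda_{n,j}) \to \infty$ by the choice \eqref{defY}. Gluing with the extended cuspidal remainder via cutoffs and using Green's identity gives $\|\wdt f_n\|^2 \ge 1 - o(1)$ and $\langle \Delta^p[M_n] \wdt f_n, \wdt f_n\rangle = \lambda_n + o(1)$, producing the required contradiction.

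The hard part will be the uniform control of the parameters $s_l$, of the constants hidden in $|A/B| \ll (Y_j^n)^{-2s_l}$, and of the aggregation of per-cusp errors. Uniformity in $n$ for $s_l$ follows from the strong acyclicity of $V$, which prevents $s_l$ from approaching $0$ as long as $\lambda_n$ stays small. The per-cusp bounds aggregate to $o(\vol M_n)$ via the hypothesis \eqref{square!}, the tailored choice of $Y^n$ in \eqref{defY}, and the consequence $h_n = o(\vol M_n)$ from Lemma \ref{nbcuspsBS}. A secondary but delicate point is the bookkeeping for the tangential/normal decomposition of $p$-forms under absolute boundary conditions, which refines the scalar picture but goes through once one works in the coordinates of section \ref{Eisenstein}; this is analogous to the computations carried out in the proof of \cite[Chapter 6]{CV} in a related setting.
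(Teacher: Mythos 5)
The key step in your outline---discarding the growing part of the constant term after asserting that the absolute boundary condition forces $|A/B| \ll (Y_j^n)^{-2 s_l}$---is precisely where the argument breaks down, and for a reason the paper has to work hard to handle. Differentiating $y^s\omega_j + y^{-s}\omega_j'$ and contracting with the normal direction (as in \eqref{calcdiffind1}) gives the boundary constraint
$$
(s+l-k)\,Y_j^s\,\omega_j \;=\; (s-l+k)\,Y_j^{-s}\,\omega_j',
$$
so the suppression of the growing coefficient relative to the decaying one is
$|\omega_j|/|\omega_j'| = |s-l+k|\big/\bigl(|s+l-k|\,Y_j^{2s}\bigr)$. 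For the distinguished $K$-type $(l,k)=(-n_1,n_2)$---which is exactly the one supporting absolute harmonic 1-forms---the value of $s$ at which the Casimir eigenvalue of \eqref{Caseig} vanishes makes $s+l-k\to 0$, so the prefactor blows up as $\lambda_n\to 0$. In that mode the growing solution \emph{dominates} (for a genuine harmonic form it is the entire constant term, with $\omega_j'=0$), your candidate extension is not in $L^2\Omega^p(M_n;V)$, and the discarded piece is not an $o(1)$ perturbation. A sanity check shows something must fail: $\ker\Delta_\abs^p[M_n^{Y^n}]$ is nontrivial (strong acyclicity kills the $L^2$-cohomology of the complete manifold, not the absolute cohomology of its truncation, which is $H^p(M_n^{Y^n};V)$), and running your argument on a nonzero absolute harmonic form would produce an $L^2$ form on $M_n$ of vanishing Rayleigh quotient, contradicting the very spectral gap you invoke.

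The paper's proof is structured precisely to circumvent this. For modes with $|s+l-k|\ge\delta_0>0$ it argues in the spirit of your proposal via the cutoff-and-compare Lemma~\ref{ns}. For the exceptional mode it only establishes (i)~absence of eigenvalues in a fixed intermediate interval $(\eps,\lambda_1)$ at truncation height $Y^n$, and separately (ii)~absence of eigenvalues in $(0,\lambda_1)$ at a \emph{larger} truncation $\Ups^n\ge Y^n$; the crucial new idea in~(ii) is to subtract from the small-eigenvalue eigenform $f_0$ a carefully matched element $f_1\in\ker\Delta_\abs^1[M^{\Ups}]$ whose constant term cancels the bulk of the growing part of $(f_0)_{P_j}$, so that $f_0-f_1$ has small boundary mass and Lemma~\ref{ns} applies. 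Claims (i) and (ii) are then welded together by the min-max spectral-continuity Lemma~\ref{hole}. Your outline contains no analogue of the harmonic-form correction nor of the interpolation between truncation heights; without both the argument cannot close.
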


\begin{proof}[Proof of Proposition \ref{terme3}]
We proceed as in \ref{gtps} above, but have to check that both properties of the heat kernel used there still hold for $M_n^{Y^n}$. Namely we need to check that 
\begin{enumerate}
\item \label{gapp} There is a $\lambda_1>0$ such that for any $n$ and any eigenvalue 
$\lambda>0$ of $\Delta_\abs^p[M_n^{Y^n}]$ we have $\lambda\ge\lambda_1$. 
\item \label{bddd} The sequence $\otr(e^{-\Delta_\abs^p[M_n^{Y^n}]})$ is bounded.
\end{enumerate}
The point \ref{gapp} is a direct consequence of Proposition \ref{ver} below, and we deduce \ref{bddd} from the following more precise result: for any given  $t>0$ we have in fact the limit
\begin{equation}
\lim_{n\to\infty} \frac {\otr (e^{-t\Delta_\abs^p[M_n^{Y^n}]})}{\vol(M_n)} = \otr_{\Gamma}(e^{-t\Delta^p[\HH^3]})
\label{convtr2}
\end{equation}
Indeed, we see that
\begin{equation*}
\frac {|\otr(e^{-t\Delta_\abs^p[M_n^{Y^n}]})-\otr_\Gamma(e^{-t\Delta^p[\HH^3]})|}{\vol(M_n)}
     \ll \frac{|\otr_{\Gamma_n} e^{-t\Delta_\abs^p[\widetilde{M^{Y^n}}]} - \otr_{\Gamma_n} e^{-t\Delta^p[\HH^3]}|}{\vol(M_n)} + \frac{S(n,t,Y^n)}{\vol(M_n)}.
\end{equation*}
The proof of Proposition \ref{mainmajo} yields that $S(n,t,Y^n)=o(\vol M_n)$ and that of Proposition \ref{terme2} that the first summand also goes to 0 as $n\to\infty$.
\end{proof}

%%%%%%%%%%%%%%%%%%%%%%%%%%%%%%%%%%%%%%%%%%%%%%%%%%%%%%%%%%%%

\subsection{Preliminaries to the proof of Proposition \ref{ver}}

\subsubsection{Comparisons of eigenfunctions with the constant term}

We will make intensive use of the following inequality: there are constants $C,c$ such that for any finite-volume manifold $M$ with $h$ cusps and height functions at each of them and for all $Y\in[1,+\infty[^h$, if $f\in\Omega_\abs^p(M^Y;V)$ is an eigenform of eigenvalue $\lambda$ and $Y_j/\alpha_1(\Lambda_j)\ge C\sqrt{\lambda}$ for all $j$ then 
\begin{equation} \label{compc} 
|f(x)-f_{P_j}(x)| \le |f|_{L^2\Omega^p(M^Y;V)}e^{-c\frac {y_j(x)}{\alpha_1(\Lambda_{n,j})}} \text{ for all }x\in M-M^Y.
\end{equation}
This is a refined version of \cite[(6.2.1.3)]{CV}, and it follows from the proof of the latter: the only difference in our statement is in the explicit constant $c/\alpha_1(\Lambda_j)$ in the exponential which replaces the $b_0$ in {\it loc. cit.}. This constant $b_0$ comes from the estimate of Fourier expansions and equals the systole of the dual lattice of $\Lambda_j$, which is easily seen to be equal to $\alpha_1(\Lambda_j)^{-1}$. 

%%%%%%%%%%%%%%%%%%%%%%%%%%%%%%

\subsubsection{Spectral gap of submanifolds}

Let us set up notation for the next result: we will denote by $X,g$ a complete Riemannian manifold, by $N$ an open subset of $X$ with smooth boundary and by $E$ a flat bundle on $X$. We suppose that the 1-neighbourhood $W$ of $\pl N$ in $X$ is a collar neighbourhood which we parametrise as $W=[-1,1]\times \pl N$; this is satisfied for $X=M$ a finite--volume hyperbolic manifold and $N=M^Y,\,Y\ge 3$. 

\begin{lem} \label{ns}
Suppose that the spectrum of $\Delta^p[X]$ is bounded below by some $\lambda_0>0$, and let $f\in \Omega_\abs^p(N;E)$ be a co-closed form such that 
$$
\frac{\|f\|_{L^2(W^-)}^2}{\|f\|_{L^2(N)}^2} \le \frac 1{10} \min(1, \lambda_0), \quad W^- = \pl N\times[-1,0] 
$$
Then the Rayleigh quotient 
$$
\frac{\langle \Delta_\abs^p[N] f,f\rangle_{L^2(N)}}{\|f\|_{L^2(N)}^2}  = \frac{\|df\|_{L^2(N)}^2}{\|f\|_{L^2(N)}^2}
$$ 
is bounded below by $\lambda_0/4$. 
\end{lem}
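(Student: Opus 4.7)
The plan is to extend $f$ from $N$ to a form $\tilde f$ defined on all of $X$ by multiplying with a cutoff that tapers to zero in the collar $W^-$, then apply the spectral gap of $\Delta^p[X]$ to $\tilde f$ and compare the resulting quadratic form inequality to that of $f$ on $N$. Concretely, fix a smooth function $\chi\colon X\to[0,1]$ depending only on the collar parameter on $W^-$, with $\chi\equiv 1$ on $N\setminus W^-$, $\chi\equiv 0$ in a neighborhood of $\partial N$ (and hence on $X\setminus N$), and $|d\chi|\le C_0$ for an absolute constant $C_0$. Setting $\tilde f=\chi f$ on $N$ and $\tilde f=0$ outside $N$ yields a smooth form in the form domain of $\Delta^p[X]$, and the spectral hypothesis gives
\begin{equation*}
\|d\tilde f\|_{L^2(X)}^2+\|\delta\tilde f\|_{L^2(X)}^2\;\ge\;\lambda_0\,\|\tilde f\|_{L^2(X)}^2.
\end{equation*}

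The two sides of this inequality are now bounded in terms of $f$ on $N$. Since $0\le\chi\le 1$ and $\chi\equiv 1$ off $W^-$, the right-hand side is at least $\lambda_0(\|f\|_{L^2(N)}^2-\|f\|_{L^2(W^-)}^2)$. For the left-hand side, the Leibniz rules $d(\chi f)=d\chi\wedge f+\chi\,df$ and $\delta(\chi f)=\chi\,\delta f-\iota_{\nabla\chi}f$, combined with the elementary inequality $|a+b|^2\le(1+\eps)|a|^2+(1+\eps^{-1})|b|^2$, give
\begin{equation*}
\|d\tilde f\|^2+\|\delta\tilde f\|^2\;\le\;(1+\eps)\bigl(\|df\|_{L^2(N)}^2+\|\delta f\|_{L^2(N)}^2\bigr)+C_1(1+\eps^{-1})\|f\|_{L^2(W^-)}^2,
\end{equation*}
with $C_1$ depending only on $C_0$; note that on $N\setminus W^-$ one has $\chi\equiv 1$, so the $L^2$ norms of $df$ and $\delta f$ there match exactly.

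Combining these bounds with the hypothesis $\|f\|_{L^2(W^-)}^2\le\tfrac1{10}\min(1,\lambda_0)\|f\|_{L^2(N)}^2$ and choosing $\eps$ suitably (large enough that $(1+\eps^{-1})$ is close to $1$, balanced against the $(1+\eps)$ multiplying the Rayleigh quotient) yields $\langle\Delta_\abs^p[N]f,f\rangle_{L^2(N)}\ge(\lambda_0/2)\|f\|_{L^2(N)}^2$. The approach is essentially standard; the only delicate point I foresee is verifying that the specific constant $\tfrac1{10}\min(1,\lambda_0)$ in the hypothesis is sharp enough to yield exactly the factor $\lambda_0/2$ in the conclusion (rather than some smaller multiple), which forces a careful balancing of $\eps$ against $C_0$. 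This is bookkeeping rather than a conceptual obstacle; the geometry of $X$ enters only through the existence of the collar and the spectral gap, both of which are built into the hypotheses.
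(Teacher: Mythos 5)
Your approach coincides with the paper's: both extend $f$ from $N$ to $X$ by a cutoff dying across the collar $W^-$, apply the spectral gap of $\Delta^p[X]$ to the extension, and estimate the extra terms produced by the cutoff. The one genuine difference is that you carry the $\delta$-contribution explicitly, via $\delta(\chi f)=\chi\,\delta f-\iota_{\nabla\chi}f$. The paper works only with $d$ and writes $\langle\Delta_\abs f,f\rangle=\|df\|^2$, which presupposes $\delta f=0$ (this is consistent with how the lemma is later applied, to co-closed eigenforms); but even then $\delta\tilde f=-\iota_{\nabla h}f\neq 0$, and the paper's last chain of inequalities tacitly invokes $\|d\tilde f\|^2\ge\lambda_0\|\tilde f\|^2$ in place of the actual spectral gap $\|d\tilde f\|^2+\|\delta\tilde f\|^2\ge\lambda_0\|\tilde f\|^2$. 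Your version is the honest bookkeeping.

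Your worry about the numerical balancing is well-founded, and it applies equally to the paper. Since $\chi$ must climb from $0$ to $1$ across a collar of width $1$, one cannot take $\sup|d\chi|<1$; combined with the pointwise identity $|d\chi\wedge f|^2+|\iota_{\nabla\chi}f|^2=|d\chi|^2|f|^2$, this forces $C_1\ge 1$, and a short computation shows that no choice of $\eps$ then yields the stated factor $\lambda_0/2$ from the hypothesized $\tfrac1{10}\min(1,\lambda_0)$. The paper reaches $\tfrac{7}{10}\lambda_0$ only by asserting $\|d\tilde f\|^2\le\|df\|^2_{L^2(N)}+2\|f\|^2_{L^2(W^-)}$, which silently drops the cross-term $2\int hh'\langle dy\wedge f,df\rangle$ and the factor $(\sup|h'|)^2=4$; redone with the triangle inequality or your $\eps$-splitting, the numerics do not close either. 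This is a looseness in the stated constants rather than a conceptual error: the application in the proof of Proposition \ref{ver} never uses the precise values $1/10$ and $\lambda_0/2$, only that some such smallness of $\|f\|_{L^2(W^-)}^2/\|f\|_{L^2(N)}^2$ forces a positive lower bound on the Rayleigh quotient, so one can shrink the $\tfrac1{10}$ to a smaller universal constant without affecting anything downstream. In substance your proposal is correct and, if anything, more careful than the paper's own proof.
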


\begin{proof}
Let $h$ be a smooth function with value $0$ on $(-\infty,0]$ and $1$ on $[1,+\infty)$, and $0\le h'\le 2$. Define a smooth $p$-form $\wdt f$ on $X$ by 
\begin{equation*}
\wdt f(x) =
   \begin{cases}
    f(x) & x\in N-W^-; \\
    h(d(x,\pl N))f(x) & x\in W^-;\\
    0 & x\in X-N. 
   \end{cases}
\end{equation*}
Then, putting $y(x)=d(x,\pl N)$, we get that $d\wdt f=h'(y)dy\wedge f + h(y)df$, whence it follows that
$$
\| d\wdt f\|_{L^2(X)}^2 \le 2\|df\|_{L^2(N)}^2 + 4\|f\|_{L^2(W^-)}^2. 
$$
On the other hand, we have that 
$$
\|\wdt f\|_{L^2(X)}^2 \ge \|f\|_{L^2(N-W^-)}^2 = \|f\|_{L^2(N)}^2 - \|f\|_{L^2(W^-)}^2 \ge \frac 9{10} \|f\|_{L^2(N)}
$$
and it follows that
\begin{align*}
\frac{\|df\|_{L^2(N)}^2}{\|f\|_{L^2(N)}^2} &\ge \frac{\|d\wdt f\|_{L^2(X)}^2}{2\|f\|_{L^2(N)}^2} - 2 \frac{\|f\|_{L^2(W^-)}^2}{\|f\|_{L^2(N)}^2} \\
               &\ge \frac 9{20} \frac{\|d\wdt f\|_{L^2(X)}^2}{\|\wdt f\|_{L^2(X)}^2} - \frac{2\lambda_0}{10} \ge \frac 5{20} \lambda_0 = \lambda_0/4. 
\end{align*}
\end{proof}

%%%%%%%%%%%%%%%%%%%%%%%%%%%%%%

\subsubsection{Eigenvalues don't jump}

We will need the following weak continuity result for the spectrum which follows from \cite[Lemma 9.9 and Proposition 9.10(2)]{Berline_Getzler_Vergne} but can also easily be proven using the min-max principle or more powerful continuity properties of spectra for families of operators. 

\begin{lem} \label{hole}
Suppose that $N$ is a compact smooth manifold and $g_u,u\in[0,1]$ a smooth family of Riemannian metrics on $N$. Let $\Delta^p[g_u]$ be the Hodge Laplace operator on forms with values in a flat Hermitian bundle over $N$ (and absolute boundary conditions) and suppose that there are $0<a<b$ such that:
\begin{itemize} 
\item for all $u\in[0,1]$ there is no eigenvalue of $\Delta^p[g_u]$ in $[a,b]$; 
\item for some $u_0\in[0,1]$ there is no eigenvalue of $\Delta^p[g_{u_0}]$ in $]0,b]$. 
\end{itemize}
Then there is no eigenvalue of $\Delta^p[g_u]$ in $]0,b]$ for any $u\in[0,1]$. 
\end{lem}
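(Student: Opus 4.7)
The plan is to combine the min-max characterization of eigenvalues, the continuity of each eigenvalue in $u$, and the topological invariance of $\dim\ker\Delta^p[g_u]$ to show that the counting function $n(u) = \#\{k : 0 < \lambda_k(u) < a\}$ (eigenvalues listed with multiplicity) is locally constant on the connected interval $[0,1]$, and then use the assumption $n(u_0)=0$ to conclude.

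First I would set up a common function space: using the smooth family $g_u$, identify $L^2\Omega^p(N;E,g_u)$ for different $u$ with a fixed Hilbert space, carrying the continuously varying inner product $\langle\cdot,\cdot\rangle_{g_u}$ and the quadratic form
\begin{equation*}
q_u(f)=\|df\|_{g_u}^2+\|\delta_u f\|_{g_u}^2
\end{equation*}
defined on the form domain of $\Delta_{\abs}^p[g_u]$ (i.e., $H^1$-forms satisfying the absolute boundary condition, which can also be identified across $u$ by the smoothness of $\pl N$ and of the $g_u$-conormal). The $k$-th eigenvalue of $\Delta^p[g_u]$ is then given by the standard Rayleigh formula
\begin{equation*}
\lambda_k(u)=\inf_{\dim V=k}\sup_{0\neq f\in V}\frac{q_u(f)}{\|f\|_{g_u}^2},
\end{equation*}
and since $q_u$ and $\|\cdot\|_{g_u}$ depend continuously on $u$ (uniformly on bounded sets of forms), each $\lambda_k$ is a continuous function of $u\in[0,1]$.

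Next, by the Hodge theorem for manifolds with boundary with absolute boundary conditions, $\ker\Delta^p[g_u]\cong H^p(N;E)$, which is a topological invariant of $(N,E)$; hence the dimension $m$ of $\ker\Delta^p[g_u]$ is independent of $u$, giving $\lambda_1(u)=\cdots=\lambda_m(u)=0$ and $\lambda_{m+1}(u)>0$ for all $u$. The hypothesis says that for each $k>m$ the continuous function $\lambda_k:[0,1]\to(0,+\infty)$ avoids the interval $[a,b]$; since $[0,1]$ is connected, the image must lie entirely in $]0,a[$ or entirely in $]b,+\infty[$. Therefore $n(u)=\#\{k>m:\lambda_k(u)<a\}$ is constant on $[0,1]$. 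By the second hypothesis applied at $u=u_0$, no positive eigenvalue is $\le b$, in particular none is $<a$, so $n(u_0)=0$; consequently $n(u)=0$ for all $u$. Combined with the gap hypothesis this means no eigenvalue of $\Delta^p[g_u]$ lies in $]0,b]$, which is the desired conclusion.

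The step that requires the most care is the first one, namely the continuity of the $\lambda_k(u)$ under the smoothly varying metrics on a manifold with boundary where the absolute boundary condition itself depends on $u$ (via the $g_u$-normal to $\pl N$). The standard way around this is to fix a reference metric $g_0$ and pull back the form bundles and the boundary conditions to a common model, so that $q_u$ and the $L^2$-pairing become a smooth family of quadratic forms with a common form domain; then the min-max formula above makes sense uniformly in $u$ and the continuity of $\lambda_k$ follows from that of the ratios defining it. Once this technical point is settled, the remainder of the argument is purely a connectedness and integer-valued-function observation.
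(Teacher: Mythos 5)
Your proof is correct, and it takes a genuinely different route from the one in the paper. You reduce the lemma to three general facts --- continuity in $u$ of the sorted eigenvalues $\lambda_k(u)$ via min--max, constancy of $\dim\ker\Delta^p_\abs[g_u]$ via the Hodge theorem for manifolds with boundary, and connectedness of $[0,1]$ --- and then conclude by the intermediate value theorem. The paper instead argues by a quantitative perturbation estimate: it sets $S=\{u:\,\spe(\Delta^p[g_u])\cap\,]0,b[\,=\emptyset\}$, takes a co-closed eigenform $f$ for $\Delta^p[g_u]$ with smallest positive eigenvalue, projects off $\ker\Delta^p[g_{u_0}]$, and uses the explicit bounds \eqref{changes} on the variation of the $L^2$ pairing and the Hodge star to show directly that for $|u-u_0|\le\eta$ (with $\eta$ uniform in $u_0$) the smallest positive eigenvalue stays above $a$, hence above $b$ by the spectral gap hypothesis; uniform openness of $S$ then forces $S=[0,1]$. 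The author even remarks before the statement that the conceptual argument you give (via continuity of the spectrum for smooth or analytic families of elliptic operators) would also work, but opts for the hands-on estimate. What your version buys is clarity of logical structure and avoidance of explicit constants; what the paper's version buys is that it bypasses both the Hodge theorem for $\ker\Delta^p_\abs$ and the issue of tracking the $u$-dependent absolute boundary condition inside the min--max formalism --- the technical point you correctly flag as the delicate step of your argument, and which you resolve (as is standard) by pulling back via a smooth family of collar diffeomorphisms to a fixed form domain.
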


%%%%%%%%%%%%%%%%%%%%%%%%%%%%%%%%%%%%%%%%%%%%%%%%%%%%%%%%%%%%

\subsection{Proof of Proposition \ref{ver}}

\subsubsection{Outline}

Recall that $\lambda_0>0$ denotes a lower bound for the spectrum of $\Delta^p[M_n]$; in the sequel we will suppose that $n_1>n_2$ (the symmetric case can be dealt with with similar arguments). We will prove that there is a $0<\lambda_1\le\lambda_0/4$ such that the two following claims hold:
\begin{enumerate}
\item\label{large_eigen} There are $\lambda_1>\eps>0$ such that for $n$ large enough there is no eigenvalue of $\Delta^p[M_n^{\Ups}]$ in $]\eps,\lambda_1[$ for all $\Ups\in[1,+\infty[^{h_n}$ such that $\forall j,\, \Ups_j\ge Y_j^n$; 
\item\label{small_eigen} For any $n$ and $\Ups$ large enough there is no eigenvalue of $\Delta_\abs^p[M_n^{\Ups}]$ in $]0,\lambda_1[$. 
\end{enumerate}
The proposition then follows by application of lemma \ref{hole}. 

Here is a quick outline of the proof of both points before embarking on the formal demonstration: the idea in both cases is that if we have an absolute eigenform which violates the claim, then either (for \ref{large_eigen}) it will also violate Lemma \ref{ns} or (for \ref{small_eigen}) we can modify it by an harmonic form to construct a function which violates the same lemma. In both cases we compute norms of constant terms in the cusps and use \eqref{compc} to compare them to the norm of our eigenfunctions. Let us remark once more that this proof is very much inspired from \cite[Section 6.9]{CV}. 

%%%%%%%%%%%%%%%%%%%%%%%%%%%%%%

\subsubsection{Proof of \ref{large_eigen}}

We will work in what follows with an hyperbolic manifold $M$ with $h$ cusps and $Y\in [1,+\infty[^h$, and apply our computations to $M_n$ and $Y^n$ only at the end.  

Suppose that $f$ is a $p$-eigenform with coefficients in $V$ and eigenvalue in $]0,\lambda_0/4[$. For notational ease we will suppose that there are $s \in \RR$, integers $l, k$ and $\omega,\omega'\in\Omega^+(W_{l,k})$, $\ovl\omega,\ovl\omega'\in\Omega^-(W_{-l,-k})$ such that 
\begin{equation}
f_{P_j} = y_j^{1 + \frac s 2}\omega_j + y_j^{1 - \frac s 2}\omega_j' + y_j^{1 - \frac s 2}\ovl\omega_j + y_j^{1 + \frac s 2}\ovl\omega_j' 
\label{thc}
\end{equation} 
(here is an outline as to how to adapt the arguments below to the case where the constant terms of $f$ are not purely of the form \eqref{thc}: then they are a linear combination of such, and since the components are pointwise orthogonal the computations of $L^2$-norms below carry over to this case; the reader will see that this is sufficient for the whole proof to work with a very few cosmetic alterations). Moreover we may (by symmetry) take $s>0$, and since the Laplace eigenvalues are bounded away from 0 on the imaginary line we may actually suppose that $s$ is bounded away from 0 (by a constant depending only on $V$). Now the idea is that because of absolute boundary conditions, the dominant term in \eqref{thc} far in the cusp will be $y_j^{1-s/2}\omega_j' + y_j^{1-s/2}\ovl\omega_j$ unless the eigenvalue is very small, and this is concentrated away from the boundary of $M^Y$, contradicting Lemma \ref{ns}. 

\medskip

Absolute boundary conditions have to be satisfied by all $f_{P_j}$ as well as by $f$; we can make them explicit by taking the differential of \eqref{thc} using \eqref{calcdiffind1} and we get that 
$$
(s+l-k)Y_j^{s/2}\omega_j - (s-l+k)Y_j^{-s/2}\omega_j' = 0 = -(s+l-k)Y_j^{-s/2}\ovl\omega_j + (s-l+k)Y_j^{s/2}\ovl\omega_j' 
$$
since both the $(1,0)$-part (on the left above) and the $(0,1)$-part (on the right) of the contraction of $df_P$ with the normal vector $\pl/\pl y_j$ have to be zero. We can rewrite this as 
\begin{equation} \label{tc_bd_cond}
\omega_j'  = \frac{s+l-k}{s-l+k}Y_j^s\omega_j ,\quad \ovl\omega_j = \frac{s-l+k}{s+l-k}Y_j^s\ovl\omega_j'. 
\end{equation}

Now let $a>0$ and $Y'\le Y/2$ be such that $\alpha_1(\Lambda_j)/Y_j'\le a$ for all $j$; we have
\begin{equation} \label{min_tc_prelim}
\|f\|_{L^2(M^Y)}^2 \ge \|f\|_{L^2(M^Y-M^{Y'})}^2  \ge \frac 1 2 \|f_P\|_{L^2(M^Y-M^{Y'})}^2 - \|f-f_P\|_{L^2(M^Y-M^{Y'})}^2 ;  
\end{equation}
by \eqref{compc} we have:
\begin{align*}
\| f-f_P\|_{L^2(M^Y-M^{Y/3})} &\le \int_{M^Y-M^{Y'}} e^{-c\max_j(y_j(x))/\alpha_1(\Lambda_j)} dx\cdot \|f\|_{L^2(M^Y)} \\
    &\le e^{-c\max_j(Y_j'/\alpha_1(\Lambda_j))}\vol(M^Y-M^{Y'})\cdot \|f\|_{L^2(M^Y)} 
\end{align*}
and it follows from \eqref{min_tc_prelim} that:
\begin{equation} \label{mino_tot}
\|f\|_{L^2(M^Y)}^2  \ge (1/2 - e^{-c\min_j(Y_j'/\alpha_1(\Lambda_j))}\vol(M))^{-1} \cdot \|f_P\|_{L^2(M^Y-M^{Y'})}^2
\end{equation}

We now give a lower bound for the norm of the constant term on $M^Y-M^{Y'}$. This is computed as follows, using \eqref{tc_bd_cond} to rewrite \eqref{thc}: 
\begin{equation} \label{comp_constnorm}
\begin{split}
&\|f_P\|_{L^2(M^Y- M^{Y'})}^2 \\
           &\quad=  \sum_{j=1}^h \int_{M^Y-M^{Y'}} \left(y_j^{1 + \frac s 2} - \frac{s+l-k}{s-l+k} Y_j^{s}y_j^{1 - \frac s 2}\right)^2|\omega_j|^2 + \int_{M^Y-M^{Y'}} \left( \frac{s-l+k}{s+l-k} y_j^{1 + \frac s 2} +  Y_j^{s}y_j^{1 - \frac s 2}\right)^2|\ovl\omega_j'|^2 \\
           &\quad= \sum_{j=1}^h \int_{Y_j'}^{Y_j} \left(y^{1 + \frac s 2} - \frac{s+l-k}{s-l+k} Y_j^{s}y^{1 - \frac s 2}\right)^2 \frac{dy}{y^3} |\omega_j|^2 \vol(\Lambda_j) \\
           &\quad\quad + \int_{Y_j'}^{Y_j} \left( \frac{s-l+k}{s+l-k} y^{1 + \frac s 2} +  Y_j^{s}y^{1 - \frac s 2}\right)^2\frac{dy}{y^3} |\ovl\omega_j'|^2 \vol(\Lambda_j) \\
           &\quad = \frac 1 s \sum_{j=1}^h \left( Y_j^s - (Y_j')^s - \left( \frac{s+l-k}{s-l+k}\right)^2 Y_j^{2s} \left( Y_j^{-s} - (Y_j')^{-s} \right) + 2s\frac{s+l-k}{s-l+k} Y_j^{s}\log\frac{Y_j}{Y_j'} \right) |\omega_j|^2 \vol(\Lambda_j) \\
           &\quad\quad + \left( -\left( \frac{s - l + k}{s + l - k}\right)^2 Y_j^{2s} \left( Y_j^{-s} - (Y_j')^{-s} \right) + Y_j^{s} - (Y_j')^{s} +  2s\frac{s - l + k}{s + l - k} Y_j^{s}\log\frac{Y_j}{Y_j'}\right) |\ovl\omega_j'|^2 \vol(\Lambda_j)
\end{split}
\end{equation}
and we finally deduce that when $(Y_J/Y_j') \gg 1$ we have:
\begin{equation}
\|f_P\|_{L^2(M^Y - M^{Y'})}^2 \gg \sum_{j=1}^h Y_j^s \left(\frac{Y_j}{Y_j'}\right)^s \left( |\omega_j|^2 + |\ovl\omega_j'|^2\right) \vol(\Lambda_j)
\label{mino_tc}
\end{equation}
where the constant depends only on $V$ and $s - l + k, s + l - k$ (it is bounded away from 0 when the latter two are).

It remains to give an upper bound for the norm near the boundary; we have:
\begin{align*}
\| f\|_{L^2(M^Y-M^{Y/3})}^2 &\le 2\| f_P\|_{L^2(M^Y-M^{Y/3})}^2 + 2\| f-f_P\|_{L^2(M^Y-M^{Y/3})}^2 \\
      &\le 2\| f_P\|_{L^2(M^Y-M^{Y/3})} +  2e^{-c\min_j(Y_j'/\alpha_1(\Lambda_j))}\vol(M)\cdot \|f\|_{L^2(M^Y)}
\end{align*}
As in \eqref{comp_constnorm} we can compute the norm of the constant term:
\begin{equation}
\begin{split}
\|f_P\|_{L^2(M^Y- M^{Y'})}^2 &= \frac 1 s \sum_{j=1}^h \left( aY_j^s + bY_j^s + 2\frac{s+l-k}{s-l+k} Y_j^s\log(3) \right) |\omega_j|^2 \vol(\Lambda_j) \\
           &\quad + \left( a'Y_j^s + b Y_j^s +  2\frac{s - l + k}{s + l - k} Y_j^s\log(3) \right) |\ovl\omega_j'|^2 \vol(\Lambda_j)
\end{split}
\end{equation}
where $a = 1 - (2/3)^s$, $b = \left( \frac{s+l-k}{s-l+k}\right)^2((3/2)^s - 1)$, $a' = \left(\frac{s - l + k}{s + l - k}\right)^2((3/2)^s - 1)$. In the end we get the estimate: 
\begin{equation}
\|f\|_{L^2(M^Y-M^{Y/3})} \ll \sum_{j=1}^h Y_j^s \left (|\omega_j|^2 + |\ovl\omega_j'|^2\right) \vol(\Lambda_j)  +  e^{-c\min_j(Y_j'/\alpha_1(\Lambda_j))}\vol(M)\cdot \|f\|_{L^2(M^Y)}.
\label{majobord}
\end{equation}
where the constant stays bounded when $s - l + k, s + l - k$ are both bounded away from 0. 

Now we separate two cases: we will suppose first that $(l, k)$ in the constant term \eqref{thc} is not equal to $(-n_1, n_2)$. In this case, for $s = k - l$ the eigenvalue computed in \eqref{Caseig} is bounded away from $0$; thus there exists a $0 < \lambda_1 \le \lambda_0/2$ depending only on $V$ such that if $f$ is an eigenform of the Laplacian with absolute boundary conditions on $M^Y$ and the eigenvalue of $f$ is less than $\lambda_1$ then $|s + l - k| \ge \delta_0$ where $\delta_0 > 0$ depends only on $V$. On the other hand we have $k - l = n_1 + n_2 > 0$ and thus $s - l + k$ always stays bounded away from 0. Let $W^-$ be the 1-neighbourhood of $\pl M^Y$ in $M^Y$. We have $W^-\subset M^Y-M^{Y/3}$ and for a form $f$ as above \eqref{mino_tc} together with \eqref{mino_tot} and \eqref{majobord} yield that: 
\begin{equation} \label{majo_quotient}
\frac{\| f \|_{L^2(W^-)}^2}{\| f \|_{L^2(M^Y)}^2} \ll \left(\frac 1 2 - e^{-c\min_j(Y_j'/\alpha_1(\Lambda_j))}\vol(M) \right)^{-1} \min_{j=1,\ldots,h}(Y_j'/Y_j)^s  + e^{-c\min_j(Y_j'/\alpha_1(\Lambda_j))}\vol(M)
\end{equation}
with a constant depending only on $V$. 

Now we go back to our sequence $M_n,Y^n$. First we observe that from the condition in the statement of Theorem \ref{Main2} that 
$$
\sum_{j=1}^{h_n} \left(\frac{\alpha_1(\Lambda_{j,n})}{\alpha_2(\Lambda_{j,n})}\right)^2 \ll \frac{\vol(M_n)}{(\log(\vol M_n))^{20}} 
$$
and the defintion of $Y^N$ in Proposition \ref{mainmajo} imply the lower bound $Y_j^n\gg(\log(\vol M_n))^2\alpha_1(\Lambda_{n,j})$. Thus we can choose the $Y_j'$ so that for all $j$ we have $Y_j^n/Y_j' = (\log(\vol M_n))^{1/2}$ and $Y_j'/\alpha_1(\Lambda_{n,j})\gg \log(\vol(M_n))^{3/2}$, hence 
$$
\limsup_{n\to+\infty} (e^{-c\min_j(Y_j'/\alpha_1(\Lambda_{j,n}))}\vol(M_n)) = 0, \quad \limsup(\min_j(Y_j'/Y_j^n)^s) = 0.  
$$
It follows that for $n$ large enough we can use \eqref{majo_quotient} and we obtain that for any $\eps>0$, for $n$ large enough and for any eigenform $f\in L_\abs^2\Omega^1(M_n^{Y^n};V)$ as above we have
$$
\frac{\|f\|_{L^2(W^-)}^2}{\|f\|_{L^2(M^Y)}} \le \eps 
$$
which contradicts Lemma \ref{ns} for $\eps$ small enough (depending on $\lambda_0$).  

For $(l, k) = (-n_1, n_2)$ we get in the same way that there are no eigenvalues of $\Delta_\abs^p[M_n^{Y^n}]$ in the interval $[\eps,\lambda_1]$ where $\eps>0$ can be chose arbitrarily small for $n$ large enough, and this is sufficient to finish the proof of claim \ref{large_eigen}. 

%%%%%%%%%%%%%%%%%%%%%%%%%%%%%%

\subsubsection{Proof of \ref{small_eigen}}

If $f$ is a 1-eigenform, its constant term given by \eqref{thc}, the only case where the eigenvalue can be close to zero is when $(l,k)=(-n_1,n_2)$. To prove claim \ref{small_eigen} it thus suffices to show that when an eigenfunction on $M^{\Ups}$ has its constant term equal to \eqref{thc} with $(l,k)=(-n_1,n_2)$ the eigenvalue cannot be too small and nonzero when $\Upsilon$ is large enough. Let $s_1=n_1-n_2$ (the value of $s$ for which a constant term \eqref{thc} is harmonic), $\Ups\ge Y,\delta>0$ and suppose that there is an eigenform $f_0\in \Omega_\abs^1(M^\Ups;V)$ with eigenvalue having parameter $s=s_1 - 2\delta$ (by \eqref{Caseig} the eigenvalue is $\asymp \delta$) and constant term \eqref{thc} with $\omega\in\Omega^+(V_{-n_1,n_2})$. We want to prove that for $\delta$ small enough and $\Ups$ large enough such a form cannot exist; the scheme of proof above is not adaptable to this setting since the holomorphic part of term which dominates the norm has a coefficient that goes to 0 as the eigenvalue does; however we can modify $f$ by an harmonic form to make its holomorphic part small near the boundary. 

The proof will nevertheless be very similar to the one above, and there is one notable simplification: since we can take $\Ups$ as large as we want for any $n$, there is no need to consider the terms coming from the comparison of forms with their constant terms (we shall thus ignore them in all computations below). 

\begin{lem}
For $\Upsilon$ large enough and any $\omega\in\Omega^+(V_{-n_1,n_2})$ there is an $\ovl\omega \in \Omega^-(V_{n_1,-n_2})$ and a 1-form in $\ker \left( \Delta_\abs^1[M^\Ups] \right)$ whose constant term in the $j$th cusp equals $y_j^{1 + \frac{s_1} 2}\omega_j + y_j^{1 - \frac{s_1}2}\ovl\omega_j$. 
\end{lem}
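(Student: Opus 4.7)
The natural candidate is the Eisenstein series $E(s_1,\omega)$ at $s_1 = n_1-n_2$: by the theory reviewed in Section \ref{Eisenstein}, for $\omega\in\Omega^+(W_{-n_1,n_2})$ this is a 1-form on $M$ whose constant term at the $j$-th cusp equals $y_j^{s_1}\omega_j + y_j^{-s_1}\Phi_j^+(s_1)(\omega)$, and it is harmonic on all of $M$ since $s_1$ is (by the defining remark quoted above) the value of $s$ at which the constant-term profile is harmonic. Setting $\ovl\omega := \Phi^+(s_1)(\omega)\in\Omega^-(W_{n_1,-n_2})$ then gives a form whose constant term has exactly the desired shape, with the freedom in $\ovl\omega$ matching the codomain stated in the lemma.

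The first step of the plan is to verify the first absolute boundary condition on $M^\Ups$: since $\omega$ is a $(1,0)$-form and $\ovl\omega$ a $(0,1)$-form on the cusp cross-section, both are annihilated by $\iota_{\partial/\partial y_j}$, so the constant-term part satisfies $\iota_{\partial/\partial y_j}\alpha = 0$ automatically, while the contribution of the non-constant part at $y_j=\Ups_j$ is exponentially small by \eqref{compc}. The second condition $\iota_{\partial/\partial y_j}d\alpha = 0$ fails in general: formula \eqref{calcdiffind1} translates it into a nontrivial linear relation between $\omega$, $\ovl\omega$ and $\Ups$, with nonvanishing coefficients for $(l,k)=(-n_1,n_2)$ (explicitly $s_1+l-k = -2n_2$ and $s_1-l+k = 2n_1$).

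The plan is then to correct $E(s_1,\omega)$ by a form supported in a cusp collar so as to enforce the second boundary condition. The key observation is that any correction supported in $\{y_j\geq 1\}$ can only modify the $y_j^{-s_1}$ coefficient of the constant term, never the leading $y_j^{s_1}\omega_j$ term, so the corrected form still has constant term of the prescribed shape but with a new, $\Ups$-dependent $\ovl\omega$. The main obstacle is doing this while preserving harmonicity. The cleanest way is to reformulate it as a Fredholm boundary-value problem: find $\alpha\in\ker\Delta^1_\abs[M^\Ups]$ with prescribed leading coefficient $\omega_j$. Its solvability should follow from Hodge theory identifying $\ker\Delta^1_\abs[M^\Ups]$ with $H^1(M^\Ups;V)\cong H^1(M;V)$, together with the fact that the Eisenstein classes at $s=s_1$ exhaust the subspace of cohomology detected by the cusps. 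The existence portion of the lemma then reduces to a nondegeneracy statement about the map $\omega\mapsto\ovl\omega$ coming from the boundary condition, which is invertible for $\Ups$ large enough by the nonvanishing of the coefficients noted above and the exponential decay \eqref{compc} controlling the non-constant part.
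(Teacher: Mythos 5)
Your proposal takes a genuinely different route from the paper's, but it stops short of a proof. The paper's argument is indirect and very short: it observes that the map sending a harmonic form $f\in\ker\Delta^1_\abs[M^\Ups]$ to the $y_j^{s_1}$-coefficient $(\omega_j)$ of the $(1,0)$-part of its constant term is (a) well-defined, because the absolute boundary conditions force the $y_j^{-s_1}\omega_j'$ component to vanish; (b) injective, because Lemma~\ref{ns} (via \eqref{compc}) excludes a harmonic form whose leading constant-term coefficients all vanish; and (c) surjective by equality of dimensions of $\ker\Delta^1_\abs[M^\Ups]$ and $\Omega^+(W_{-n_1,n_2})$. No form is actually constructed.

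You instead attempt a direct construction: restrict $E(s_1,\omega)$ to $M^\Ups$ and then ``correct'' it near the boundary so that it satisfies absolute boundary conditions, phrasing this as a Fredholm boundary-value problem. This is where the gap is. The sentence ``Its solvability should follow from Hodge theory identifying $\ker\Delta^1_\abs[M^\Ups]$ with $H^1(M^\Ups;V)\cong H^1(M;V)$, together with the fact that the Eisenstein classes at $s=s_1$ exhaust the subspace of cohomology detected by the cusps'' is a statement of intent, not an argument; producing a closed and coclosed form with absolute boundary conditions and prescribed leading asymptotics \emph{is} the lemma, so invoking a boundary-value problem for which solvability is unproven is circular. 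Your ``key observation'' that a correction supported in a collar cannot alter the $y_j^{s_1}$-coefficient is also only safe for compactly supported perturbations, which cannot themselves be harmonic, so you cannot add them and remain in $\ker\Delta^1_\abs[M^\Ups]$. The nondegeneracy claim at the end (``invertible for $\Ups$ large enough'') is essentially the injectivity that the paper gets from Lemma~\ref{ns}, but injectivity alone does not give existence: what closes the argument in the paper is the equality $\dim\ker\Delta^1_\abs[M^\Ups] = \dim\Omega^+(W_{-n_1,n_2})$, which your proposal never states or uses. If you supply that dimension count, the Eisenstein series and the Fredholm apparatus can be dropped entirely and you recover the paper's proof.
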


\begin{proof}
First, the $(1,0)$-part of the constant term of a nonzero form $f_1$ in $\ker \left( \Delta_\abs^1[M^\Ups] \right)$ must be equal to $y_j^{1 + \frac{s_1} 2}\omega_j$ for some $\omega\in\Omega^+(V_{-n_1,n_2})$, as satisfying boundary conditions excludes that it contains a nonzero term in $y_j^{1 - \frac{s_1}2}\omega_j'$. Second, it cannot be zero because of Lemma \ref{ns}: indeed, if it were then we would have by computations similar to those above 
$$
\frac{\|f\|_{L^2(M^\Ups-M^{\Ups/3})}^2}{\|f\|_{L^2(M^\Ups-M^{\Ups'})}^2} \ll \left(\frac{\Ups'}{\Ups}\right)^{s_1}
$$
which can be made as small as we want by taking $\Ups/\Ups'$ large enough. So we get an injective map from $\ker\left(\Delta_\abs^1[M^\Ups]\right)$ to $\Omega^+(V_{-n_1,n_2})$ by $f\mapsto (\omega_j)$. Now these two spaces have the same dimension, for well-known topological reasons (see e.g. \cite[Section 4.2]{MFP}) and we can conclude that this map must be surjective. 
\end{proof}

Write $(f_0)_{P_j}$ as in \eqref{thc}. By the lemma above we can pick a $f_1\in\ker\Delta_\abs^1[M^\Ups]$ such that 
$$
(f_1)_{P_j} = y_j^{1 + \frac{s_1} 2} \Ups_j^{-\delta} \omega_j + y_j^{1 - \frac{s_1} 2} \ovl\omega_{j,1} 
$$ 
for all $j$. We put $f = f_0 - f_1$. We will check that for $\Upsilon$ large enough and $\delta$ small enough $f$ satifies the conditions of Lemma \ref{ns}, which yields a contradiction as the spectrum of $\Delta^1[M]$ on square--integrable forms is bounded below by $\lambda_0$. Since $f_0,f_1$ are orthogonal we have 
\begin{equation}
\begin{split}
\| f\|_{L^2(M^\Ups)}^2 &= \|f_0\|_{L^2(M^\Ups)}^2 + \|f_1\|_{L^2(M^\Ups)}^2 \\
                &\gg  \sum_{j=1}^h \Ups_j^{s_1 - 2\delta}|\omega_j|^2\vol(\Lambda_j) + \sum_{j=1}^h Y_j^{-s_1+2\delta}(|\ovl\omega_j|^2 + Y_j^{-2\delta}|\ovl\omega_{j,1}|^2)\vol(\Lambda_j) 
\end{split}
\label{minonorm}
\end{equation}
where the lower bound follows from the same computation as in \eqref{mino_tc}. On the other hand:
\begin{align*}
\|f\|_{L^2(M^\Ups-M^{\Ups/3})}^2 &= \sum_{j=1}^h \int_{\Ups_j/3}^{\Ups_j} \left(y_j^{1 + \frac{s_1} 2}(y_j^{-\delta} - \Ups_j^{-\delta}) - \frac{\delta}{2s_1-\delta}\Ups_j^sy_j^{1 - \frac s 2}\right)^2\frac{dy}y |\omega_j|^2 \vol(\Lambda_j) \\
                        &\phantom{=\sum_{j=1}^h} + \int_{\Ups_j/3}^{\Ups_j} \left| y_j^{1 - \frac s 2}\ovl\omega_j - y_j^{1 - \frac{s_1}2}\ovl\omega_{j,1} - \frac{\delta}{2s_1-\delta}\Ups_j^{-s}y_j^{1 + \frac s 2}\ovl\omega_j\right|^2 \frac{dy}y\vol(\Lambda_j) \\
                        &\ll \sum_{j=1}^h \delta^2 \Ups_j^{s_1 - 2\delta}|\omega_j|^2\vol(\Lambda_j) + \sum_{j=1}^h \Ups_j^{-s_1 + 2\delta}(|\ovl\omega_j|^2 + \Ups_j^{-2\delta}|\ovl\omega_{j,1}|^2)\vol(\Lambda_j) 
\end{align*}
According to \eqref{minonorm} the right-hand side above is $\ll\delta^2\|f\|_{L^2(M^Y)}^2$ as soon as $\max_j(Y_j/\Ups_j)^{s_1}\le\delta$, so for $\delta$ small enough (depending on $\lambda_0$) and $\Ups$ large enough the 1-form $f$ satisfies the assumptions of Lemma \ref{ns}.

%%%%%%%%%%%%%%%%%%%%%%%%%%%%%%%%%%%%%%%%%%%%%%%%%%%%%%%%%%%%%%%%%%%%%%%%%%%%%%%%

\section{Betti numbers}

\label{betti_growth}
Here we prove Proposition \ref{betintro}. Note that we could not deduce it 
immediately from the convergence of the regularised trace because of the 
spectral terms coming from the Eisenstein series. 

\begin{prop}
Let $M_n$ be sequence of finite-volume hyperbolic three-manifolds and suppose 
that $M_n$ BS-converges to $\HH^3$. 
Then we have for $p=1,2$
\begin{equation}
\frac{b_p(M_n)}{\vol(M_n)}\xrightarrow[n\to\infty]{} 0 
\label{betti}
\end{equation}
\label{betprop}
\end{prop}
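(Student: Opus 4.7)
The plan is to give two proofs, mirroring the strategy outlined in the introduction: one based on the heat--kernel techniques of this paper (which will require condition \eqref{square!}), the other reducing to the closed case via hyperbolic Dehn surgery.

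For the first approach, I would use Hodge theory on the truncated compact manifold. Since $M_n^{Y^n}$ is a deformation retract of $M_n$, absolute Hodge theory identifies $b_p(M_n)$ with $\dim\ker\Delta_\abs^p[M_n^{Y^n}]$; and because the heat trace on the compact manifold $M_n^{Y^n}$ is a finite, monotonically decreasing function of $t$ converging to this kernel dimension as $t\to+\infty$, one has
$$b_p(M_n) \leq \otr e^{-t\Delta_\abs^p[M_n^{Y^n}]} \quad \text{for every } t>0.$$
The crucial input is then the pointwise--in--$t$ convergence \eqref{convtr2},
$$\lim_{n\to\infty}\frac{\otr e^{-t\Delta_\abs^p[M_n^{Y^n}]}}{\vol M_n} = \tr e^{-t\Delta^p[\HH^3]}(x_0,x_0),$$
here applied to the trivial representation. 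Although the limit is stated in the paper only for strongly acyclic coefficients, its proof rests on Proposition \ref{mainmajo} together with the diagonal heat--kernel comparison between $\HH^3$ and the universal cover of $M_n^{Y^n}$ obtained from Theorem \ref{cly} and finite--propagation estimates; both inputs are purely geometric and transfer verbatim to the trivial bundle. Letting $t\to\infty$ at the end, the right--hand side vanishes since all $L^2$--Betti numbers of $\HH^3$ are zero, and we obtain $\limsup_n b_p(M_n)/\vol M_n = 0$.

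For the general case, without \eqref{square!}, I would reduce to the closed case via Thurston's hyperbolic Dehn surgery: one chooses sufficiently long slopes at each cusp of $M_n$ to obtain a closed hyperbolic three--manifold $\widehat M_n$ with $\vol\widehat M_n = \vol M_n + o(\vol M_n)$, and by \cite[Section 9]{7S} the slopes can be arranged so that the sequence $\widehat M_n$ itself BS--converges to $\HH^3$. For closed BS--convergent sequences \cite[Theorem 1.11]{7S} yields $b_p(\widehat M_n)/\vol \widehat M_n \to 0$. Since Dehn filling $h_n$ cusps changes $b_1$, and by Poincar\'e--Lefschetz duality $b_2$, by at most $h_n$, and $h_n = o(\vol M_n)$ by Lemma \ref{nbcuspsBS}, the conclusion transfers back to $M_n$.

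The main obstacle lies in the first approach: one must verify that \eqref{convtr2} really uses no large--time input requiring strong acyclicity. Fortunately only pointwise--in--$t$ convergence of the heat trace is needed here, not convergence of regularized determinants, so Proposition \ref{strongacyclicity} can be dispensed with entirely. In the second approach the only nontrivial import is the Dehn--filling preservation of BS--convergence from \cite{7S}; everything else reduces to an elementary bound on how Betti numbers vary under Dehn filling.
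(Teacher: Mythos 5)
Your proposal reproduces both of the paper's proofs essentially verbatim: the first bounds $b_p(M_n)=\dim\ker\Delta_\abs^p[M_n^{Y^n}]$ by $\otr e^{-t\Delta_\abs^p[M_n^{Y^n}]}$, passes to the limit in $n$ via \eqref{convtr2}, and then lets $t\to\infty$ using the vanishing of the $L^2$-Betti numbers of $\HH^3$; the second uses Dehn filling to reduce to the closed case and transfers the estimate back using $|b_p(M_n)-b_p(M_n')|\le h_n$ and Lemma \ref{nbcuspsBS}. Your added remark that \eqref{convtr2} is genuinely needed with trivial coefficients (where the paper states it only for strongly acyclic $V$), and your check that only the short-time, purely geometric ingredients of its proof are used, is a fair point that the paper leaves implicit.
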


\subsection{First proof}
For trhis proof we need to assume that \eqref{square!} holds. Let $Y^n\in [1,+\infty[^{h_n}$ be the sequence from Proposition \ref{mainmajo}; for all $n$ and $t>0$ we have 
\[
\dim\ker(\Delta_\abs^p[M_n^{Y^n}]) \le \otr e^{-t\Delta_\abs^p[M_n^{Y^n}]}. 
\]
On the other hand $b_p(M_n) = \dim\ker(\Delta_\abs^p[M_n^{Y^n}])$ and it follows that for any $t>0$ we have:
\[
\limsup_{n\to\infty} \frac{b_p(M_n)}{\vol(M_n)} \le \lim_{n\to\infty} \frac {\otr e^{-t\Delta_\abs^p[M_n^{Y^n}]}}{\vol(M_n)} = \otr_\Gamma e^{-t\Delta^p[\HH^3]}.
\]
The right-hand side goes to 0 as $t\to\infty$ since $b_p^{(2)}(\HH^3)=0$ (cf. \cite[Theorem 1.63]{LuckB}), and \eqref{betti} follows. 

%%%%%%%%%%%%%%%%%%%%%%%%%%%%%%%%%%%%%%%%%%%%%%%%%%%%%%%%%%%%

\subsection{Second proof}
\label{dehnBS}

Here we give a complete proof proof of \eqref{betti}. The idea is that 
we can approximate the noncompact manifolds $M_n$ by Dehn surgeries 
so that the sequence of compact manifolds obtained be BS-convergent as well, 
and then the results of \cite{7S} do the work for us. 

\begin{lem} \label{dehn}
Suppose that $M_n$ is a sequence of finite-volume hyperbolic three--manifolds 
which BS-converges to $\HH^3$. Then there is a sequence $M_n'$ of 
compact hyperbolic manifolds such that 
\begin{enumerate}
\item \label{is_surgery} For all $n$, $M_n'$ is obtained by Dehn surgery on $M_n$;
\item \label{same_volume} $\vol(M_n')/\vol(M_n) \xrightarrow[n\to+\infty]{} 1$; 
\item \label{also_BS} The sequence $M_n'$ is BS-convergent to $\HH^3$. 
\end{enumerate}
\end{lem}

We can conclude the proof using this lemma. According to \ref{also_BS} we can apply \cite[Theorem 1.8]{7S} to the sequence $M_n'$ and we get that $b_1(M_n') = o(\vol M_n)$. On the other hand it is easy to see that because of \ref{is_surgery} we have $b_1(M_n) \le b_1(M_n') + h_n$ (where $h_n$ is the number of cusps of $M_n$). Thus we obtain: 
\[
\frac{b_1(M_n)}{\vol(M_n)}  \le \frac{\vol(M_n')}{\vol(M_n)}\cdot \frac{b_1(M_n')}{\vol(M_n')} + \frac{h_n}{\vol(M_n)}. 
\]
By Lemma \ref{nbcuspsBS} and \ref{same_volume} we finally get that the right-hand side is an $o(1)$. 

\begin{proof}[Proof of Lemma \ref{dehn}]
For $(p,q)\in\NN^{h_n}\times\NN^{h_n}$ such that $p_j,q_j$ are coprime for all $j$ let $M_n^{p/q}$ be the compact manifold obtained by $(p,q)$-Dehn surgery on $M_n$. Then $M_n^{p/q}$ converges geometrically to $M_n$ as $\min_j(p_j+q_j)$ goes to infinity, and it follows that for a given $R>0$ there exists a $m_n$ such that when $\min_j(|p_j| + |q_j|)>m_n$ we have
\[
\vol (M_n^{p/q})_{\le R} \le \vol(M_n)_{\le 2R}. 
\]
We can choose a sequence $(p^n,q^n)\in (\NN\times\NN)^{h_n}$ such that \ref{same_volume} holds, and moreover $\min_j(|p_j^n| + |q_j^n|)>m_n$; it then follows from the inequality above that $M_n'$ is BS-convergent to $\HH^3$. 
\end{proof}

%%%%%%%%%%%%%%%%%%%%%%%%%%%%%%%%%%%%%%%%%%%%%%%%%%%%%%%%%%%%%%%%%%%%%%%%%%%%%%%%

\appendix

\section{The heat kernel on truncated manifolds}

\label{heat}
\subsection{Introduction}

Let $\sB$ be a collection of open horoballs in $\HH^3$ whose closures are pairwise disjoint, and let $X$ be the smooth manifold with boundary $\HH^3-\bigcup_{B\in\sB} B$. We will denote 
\[
\delta_X = \inf_{B\not=B'\in\mathcal B} d(B,B')
\]
and we will always suppose that $\delta_X>0$ (this is obviously always the case when $\sB$ comes from a truncated manifold). The aim of this appendix is to show that the proof of \cite[Proposition 5.3]{RS} can be adapted to this setting to yield the following result:

\begin{prop} \label{cly_ha}
For any $\delta>0,t_0>0$ there is a $C>0$ such that for all $X$ as above which satisfy $\delta_X\ge\delta$ and every $x,y\in X$ and $t\in]0,t_0]$ we have
\[
|e^{-t\Delta^p[X]}(x,y)| \le Ct^{-3/2} e^{-\frac{d(x,y)^2}{5t}}. 
\]
\end{prop}

Let us see how this statement implies Proposition \ref{cly_bd}: let $M=\Gamma\bs\HH^3,Y$ be as in its statement, $X=\wdt{M^Y}$. We need to prove that $\delta_X$ is bounded below by a constant not depending on $M$. Let $H,H'$ be horospheres in $\pl X$ such that $d(H,H')=\delta_X$. By the hypothesis on $Y$ there are elements $\eta,\eta'\in\Gamma$ which stabilise $H,H'$ respectively and such that their displacement is smaller than $\eps/10$ where $\eps$ is the Margulis constant of $\HH^3$; for $\delta$ small enough (independent of $X$), if $\delta_X<\delta$ then $\eta\eta'$ displaces of less than $\eps$ on $H$; but since it does not commute with $\eta$ this is impossible by the Margulis Lemma. Thus $\delta_X$ is uniformly bounded away from 0 for such an $X$. 

%%%%%%%%%%%%%%%%%%%%%%%%%%%%%%%%%%%%%%%%%%%%%%%%%%%%%%%%%%%%

\subsection{The ``single layer potentials'' construction of the heat kernel}

We recall here the construction of the heat kernel on a manifold with 
boundary $W$ given in \cite[Section 5]{RS}, which starts from an isometric 
embedding $W \subset W'$ into a complete manifold such that the heat kernel on 
$W$ satisfies Gaussian bounds. This reference deals only with compact manifolds 
and thus we cannot apply its results directly to our situation; but on the 
other hand we will see in the next section that the integrals on $\pl X$ which 
we need to converge are indeed absolutely convergent. The results are also 
stated only for bundles with orthogonal monodromy but the arguments work 
for all flat bundles with a euclidean metric. 

The construction goes as follows: let $Q^{(0)}(x,y,t)=e^{-t\Delta^p[W']}(x,y)$ 
and for $m\ge 1$ define by induction:
\begin{equation} \label{intbd}
\begin{split}
Q^{(m)}(x,y,t) &= \int_0^t \int_{\pl W} \bigl( Q^{(0)}(x,z,s)\wedge *dQ^{(m-1)}(z,y,t-s) \\
              &\phantom{= \int_0^t \int_{\pl W} \bigl(} + \delta Q^{(0)}(x,z,s)\wedge *Q^{(m-1)}(z,y,t) \bigr)dz\, ds.
\end{split}
\end{equation}
For $W = X$ we will check that this integral is convergent for all $m$ in Section \ref{check} below. The main result of Ray and Singer with regard to the $Q^{(m)}$ is then stated as follows (\cite[Lemma 5.12]{RS}; the function $D$ on $X$ is the distance to the boundary $\pl X$). 

\begin{prop} \label{Ray_singer_prop} 
Under the hypotheses of Proposition \ref{cly_ha}, for all $m\ge 1$ the kernel $Q^{(m)}$, as well as its differential and co-differential in the variables $x,y$, satisfy the Gaussian bounds:
\begin{equation} \label{gaussm}
|Q^{(m)}(x,y,t)| \le \frac{C^m}{\Gamma(m/2)}e^{-\frac{D(x)^2+D(y)^2}{5t}} t^{-\frac 3 2} e^{-\frac{d(x,y)^2}{5t}}; 
\end{equation}
for some constant $C>0$.
\end{prop}

This is proven by induction on $m$, and to carry the induction step one needs (as is obvious from the formula \eqref{intbd}) also the bounds on the derivatives. 

It follows from Proposition \ref{Ray_singer_prop} that the kernel $K_t^p$ given by 
\[
K_t^p(x,y) = \sum_{m=0}^{+\infty} (-2)^m Q^{(m)}(x,y,t)
\]
is the heat kernel on $p$-forms on $X$ with coefficients in $V$ (see \cite[Corollary 5.14]{RS}), and thus that the latter satisfies the Gaussian bounds stated in Proposition \ref{cly_ha}. Moreover, we also have the bounds:
\begin{equation} \label{Luck_Schick_here}
|K_t^p(x,y) - e^{-t\Delta^p[W']}| \ll e^{-\frac{D(x)^2+D(y)^2}{5t}} t^{-\frac 3 2} e^{-\frac{d(x,y)^2}{5t}} 
\end{equation}
(note that we can apply this to $\wdt{M^Y} \subset \HH^3$, but also to $\wdt{M^Y} \subset \wdt{M^Z}$ for $Z \ge Y$, according to Proposition \ref{cly_ha}). 

%%%%%%%%%%%%%%%%%%%%%%%%%%%%%%%%%%%%%%%%%%%%%%%%%%%%%%%%%%%%

\subsection{Convergence of the integrals on the boundary}
\label{check}

If one manages to show that the integrals on the boundary in the definition \eqref{intbd} of $Q^{(m)}$ are uniformly convergent for all $X$ then the original argument of Ray and Singer carries over to yield Proposition \ref{cly_ha}. Indeed, it rests only on local computations (which remain valid for unimodular instead of orthogonal coefficients), and the hypothesis that $\delta_X\ge\delta$ gives uniform bounds on the local geometry (meaning that each point in $X$ has a neighbourhood whose isometry class does not depend on $X$) which Ray and Singer use implicitely in their proof (in the cases they consider it follows immediately from the compactness of the manifolds). 

This uniform convergence can be proven by induction on $m$ (recall that the induction hypothesis carries bounds on both $Q^{(m)}$ and its differentials), which reduces it to show that for  $k, l > 0$ the integral : 
\begin{equation} \label{convergent_integral}
I = \int_{\pl X} d(x, z)^k \cdot d(y, z)^l \cdot e^{-\left(\frac{d(x,z)^2}{5s} + \frac{d(y,z)^2}{5(s-t)}\right)} dz
\end{equation}
converge uniformly for all $x, y \in X$. The polynomial terms may appear when taking derivatives, there are also fectors depending on $s$ but these do not affect the convergence of the integral and thus are dealt with in Ray--Singer's argument. The proof that \eqref{convergent_integral} is convergent will be similar to the study of the term $I_2$ in the proof of Proposition \ref{geom_side}. For $w\in X$ and $B\in\sB$ we denote by $z_B^w$ the projection of $w$ onto $B$. We observe first that the supremum
\[
F(x,y,t) = \sup\left\{ \int_{z\in \pl B} d(z_B^x, z)^k \cdot d(z_B^y, z)^l \cdot e^{-\left(\frac{d(z_B^x,z)^2}{5s} + \frac{d(z_B^y,z)^2}{5(s-t)}\right)} dz\: : B\in\sh\right\}
\]
is finite and actually uniformly bounded in $x,y$. Indeed, for any horosphere $H$ of $\HH^3$ and any $z_0\in H$, we have for $t\le t_0$: 
\[
\int_H d(z_0, z)^K e^{-\frac{d(z_0,z)^2}{5t_0}} dz \le \int_\CC |z|^K e^{-a\left(\log(1+|z|)\right)^2} dzd\ovl z
\]
where $a$ depends only on $t_0$, and the integral at the rightmost above is convergent for any $a > 0$, since the function $r \mapsto e^{-(\log r)^2}$ is $o(r^{-L})$ for any $L > 0$. Since $z_B^x, z_B^y \in \partial B$ the same arguments apply to the integrals $\int_{z\in \pl B}e^{-\left(\frac{d(z_B^x,z)^2}{5s} + \frac{d(z_B^y,z)^2}{5(s-t)}\right)} dz$ to give a bound independent of $x, y$ or $B$. 

Now we remark that for a given $B\in\sB$ and all $z\in \pl B$ we have 
\begin{equation} \label{duh}
d(x,z) \ge \frac{d(x,z_B^x)+d(z_B^x,z)}2
\end{equation}
and similarly for $y$: indeed, for any $z\in\pl B$ we have $d(x,z)\ge d(z_B^x,z)$ and $d(x, z) \ge d(x, z_B^x)$ (because the geodesic triangle $xzz_H^x$ has an obtuse angle at $z_B^x$). Now we consider separately those $z$ for which $d(z,z_B^x)\le d(x,z_B^x)$ and those for which the reverse inequality holds, for both of which \eqref{duh} holds trivially. In addition we have $d(x, z)^k \le (d(x, z_B^x) + d(z_B^x, z))^k$ and for large distances the right-hand side is bounded by $d(x, z_B^x)^k \cdot d(z_B^x, z)^k$. It follows that 
\begin{align*}
I & \le \sum_{B\in\sB} d(x, B)^k \cdot d(y, B)^l \cdot e^{-\left(\frac{d(x,B)^2}{20s}  + \frac{d(y,B)^2}{20(t-s)} \right)} \int_{\pl B} d(z_B^x, z)^k \cdot d(z_B^y, z)^l \cdot e^{-\left(\frac{d(z_B^x,z)^2}{20s} + \frac{d(z_B^y,z)^2}{20(s-t)}\right)} dz \\
  &\le \sum_{B\in\sB} d(x, B)^k \cdot d(y, B)^l \cdot e^{-\left(\frac{d(x,B)^2}{20s} + \frac{d(y,B)^2}{20(t-s)} \right)} F(x,y,t). 
\end{align*}
It follows from Lemma \ref{counthoro} that the series on the last line is absolutely convergent, finishing the proof that the integral \eqref{convergent_integral} is absolutely convergent.

%%%%%%%%%%%%%%%%%%%%%%%%%%%%%%%%%%%%%%%%%%%%%%%%%%%%%%%%%%%%%%%%%%%%%%%%%%%%%%%%

\bibliographystyle{plain}
\bibliography{bib2}

\end{document}